\documentclass{scrartcl}
\usepackage{amssymb}
\usepackage{amsfonts}
\usepackage{amsmath}
\usepackage{pgf}
\usepackage{hyperref}
\usepackage{enumerate}
\newcommand{\bbbc}{\mathbb{C}}
\newcommand{\bbbn}{\mathbb{N}}
\newcommand{\bbbr}{\mathbb{R}}
\newcommand{\Idx}{\mathcal{I}}
\newcommand{\ctI}{\mathcal{T}_{\Idx}}
\newcommand{\ctIl}[1]{\mathcal{T}_{\Idx}^{(#1)}}
\newcommand{\ctII}{\mathcal{T}_{\Idx\times\Idx}}
\newcommand{\lfII}{\mathcal{L}_{\Idx\times\Idx}}
\newcommand{\lfaII}{\mathcal{L}^+_{\Idx\times\Idx}}
\newcommand{\lfiII}{\mathcal{L}^-_{\Idx\times\Idx}}
\newcommand{\supp}{\mathop{\operatorname{supp}}\nolimits}
\newcommand{\dist}{\mathop{\operatorname{dist}}\nolimits}
\newcommand{\diam}{\mathop{\operatorname{diam}}\nolimits}
\newcommand{\sons}{\mathop{\operatorname{sons}}\nolimits}
\newcommand{\sd}[1]{\mathop{\operatorname{sd}}_{#1}\nolimits}
\newcommand{\level}{\mathop{\operatorname{level}}\nolimits}
\newcommand{\bi}{{\mathbf i}}
\newcommand{\punkt}{\boldsymbol{\cdot}}
\newcommand{\enorm}{\mathfrak{n}_{dp}}
\newcommand{\Cin}{C_\mathrm{in}}
\newcommand{\Cmi}{C_\mathrm{mi}}

\newtheorem{theorem}{Theorem}
\newtheorem{lemma}[theorem]{Lemma}
\newtheorem{corollary}[theorem]{Corollary}
\newtheorem{definition}[theorem]{Definition}
\newtheorem{remark}[theorem]{Remark}
\newtheorem{assumption}[theorem]{Assumption}

\newenvironment{proof}{\emph{Proof.}}{\hfill$\Box$}
\allowdisplaybreaks
\numberwithin{equation}{section}
\numberwithin{theorem}{section}
\title{Approximation of the high-frequency Helmholtz kernel
       by nested directional interpolation}
\author{Steffen B\"orm and Jens M. Melenk}
\date{\today}
\begin{document}
\maketitle

\begin{abstract}
We present and analyze an approximation scheme for a class of highly oscillatory
kernel functions, taking the 2D and 3D Helmholtz kernels as examples.
The scheme is based on polynomial interpolation combined
with suitable pre- and postmultiplication by plane waves.
It is shown to converge exponentially in the polynomial degree
and supports multilevel approximation techniques. 
Our convergence analysis 
may be employed to establish exponential convergence of certain classes of 
fast methods for discretizations of the Helmholtz integral operator that
feature polylogarithmic-linear complexity.  
\end{abstract}

%
%
\section{Introduction}
\label{sec:intro}
Integral operators with highly oscillatory kernels arise, for example, 
in time-harmonic settings of acoustic and electromagnetic scattering. 
They have the form 
\begin{equation}
\label{eq:integral-operator}
  u \mapsto \mathcal{G}[u](x) := \int_\Gamma k(x,y) u(y) \,dy,
\end{equation}
where $\Gamma$ is typically a subset of $\bbbr^n$ or a $(n-1)$-dimensional
submanifold of $\bbbr^n$.
Prominent examples of kernels $k$ that are studied in detail in the
present work are the three- and two-dimensional Helmholtz kernels 
\begin{align}\label{eq:3D_2D_helmholtz_kernels}
  g(x,y)
  &= \frac{\exp(\bi \kappa \|x-y\|)}{4\pi \|x-y\|}, &
  h(x,y)
  &= \frac{\bi H^{(1)}_0(\kappa \|x-y\|)}{4},  
\end{align} 
where $H^{(1)}_0$ is the first kind Hankel function of order $0$. For 
real, large \emph{wave numbers} $\kappa \in \bbbr_{\geq 0}$, these two functions 
are highly oscillatory.  A more general setting is one where the factorization
\begin{equation}\label{eq:factorization-of-g}
  k(x,y) = \Bigl[ k(x,y) \exp(-\bi \kappa \|x - y\|)\Bigr]
           \exp(\bi \kappa \|x - y\|) 
\end{equation}
decomposes the kernel $k$ into a highly oscillatory 
contribution $\exp(\bi \kappa \|x - y\|)$ and a non-oscillatory (or at
least less oscillatory) term $k(x,y) \exp(-\bi \kappa \|x - y\|)$.
Again, the two- and three-dimensional Helmholtz kernels are precisely
of this type. 

Discretization of the integral operator (\ref{eq:integral-operator}) by 
Galerkin or collocation methods leads to fully populated system matrices.
In order to reduce the complexity of these methods, various 
compression schemes have been devised in the past.
Techniques that address the particular challenges arising in the
high-frequency setting of large wave numbers $\kappa$ include the
\emph{fast multipole method}, \cite{RO93,GRHUROWA98,DA00,chen-et-al06},
the \emph{butterfly method}, \cite{BOMI96,CADEYI09,LIYAYI15}, 
and \emph{directional approximations}, \cite{BR91,ENYI07,MESCDA12,BEKUVE15}.  
In many of these approaches, the underlying justification is based on
approximating the kernel using suitable expansion systems; in the high
frequency setting, these expansion systems are typically not polynomial
in order to account for the oscillatory behavior of $k$. 
Polylogarithmic-linear complexity of the algorithms requires a second
ingredient: a multilevel structure.
Suitable expansion systems are given on each level and a fast transfer
between levels has to be effected, e.g., by ``re-expansion''.

From the point of view of numerical analysis, the stability of such an
iterated approximation requires investigation.
For a class of expansion systems that consist of products of plane waves
and polynomials, we present a full analysis of the approximation properties
and a stability analysis of the corresponding iterated re-expansion.
We stress that underlying our approximations is polynomial
interpolation (e.g., Chebyshev interpolation).
It is this very powerful tool that permits a full analysis of the 
algorithm of \cite{MESCDA12}, which we present here.
Additionally, the techniques developed here can be used to 
analyze other re-expansion processes that are based on polynomial interpolation, 
for example, the butterfly algorithm proposed in \cite[Sec.~4]{CADEYI09} and 
\cite{kunis-melzer12}. 
This claim to possible generalizations is substantiated in \cite{BOBOME17},  
where the 
analysis given in \cite{demanet-ferrara-maxwell-poulson-ying10} of the 
particular butterfly algorithm of \cite[Sec.~4]{CADEYI09} is sharpened
using the tools of the present paper.

It is worth mentioning that many algorithms in the literature such as 
\cite{BEKUVE15} enforce the required multilevel structure by algebraic means; 
while these algorithms can be very successful in practice, a complete
analysis is still missing since similarly powerful analytical tools
are not available. 
We defer a more detailed discussion of our problem setting to
Section~\ref{sec:ideas} after the necessary notation has been provided. 

\bigskip
\emph{Concerning notation:} 
We write $\langle \cdot,\cdot\rangle$ for the \emph{bilinear} form 
\begin{align}
\label{eq:euclidean-innerproduct}
\bbbc \times \bbbc &\rightarrow \bbbc, & (x,y) \mapsto   \langle x, y \rangle
  &:= \sum_{j=1}^n x_j y_j. 
\end{align}
This is \emph{not} a sesquilinear form, but the restriction to
the real subspace $\bbbr^n$ is the standard Euclidean inner
product. Throughout, $\|\cdot\|$ denotes the Euclidean norm on $\bbbr^n$. 
For vectors $c \in \bbbr^n$ and functions $u$, we use the shorthand 
$\exp(\bi \kappa \langle c,\punkt\rangle) u$ to denote the function 
$x \mapsto \exp(\bi \kappa \langle (c,x\rangle) u(x)$.
In the special case $n=1$, we simply write $\exp(\bi \kappa c\punkt) u$
for the function $x \mapsto \exp(\bi \kappa c x) u(x)$.

For compact sets $B$, we use the maximum norm
\begin{align*}
  \|f\|_{\infty,B} &:= \max \{ |f(x)|\ :\ x\in B \} &
  &\text{ for all } f\in C(B),
\end{align*}
for bounded linear operators $\Psi \in L(C(B_1),C(B_2))$ that map
functions in $C(B_1)$ to $C(B_2)$, we denote the induced operator norm by
\begin{equation*}
  \|\Psi\|_{\infty,B_2\leftarrow B_1}
  := \sup\left\{ \frac{\|\Psi[f]\|_{\infty,B_2}}{\|f\|_{\infty,B_1}}
         \ :\ f\in C(B_1)\setminus\{0\} \right\}.
\end{equation*}
 
%
%
\section{Directional techniques for oscillatory functions}

\subsection{Polynomials and tensor interpolation}
\label{sec:tensor_interpolation}

Since polynomial interpolation on tensor product domains features prominently 
in the present paper, let us fix some notation and assumptions.
For $m \in \bbbn_0$, we denote by $\Pi_m$ the space of univariate
polynomials of degree $m$.
Let $\xi_0,\ldots,\xi_m \in [-1,1]$  be distinct interpolation 
points and define the associated Lagrange polynomials by 
\begin{align}
\label{eq:lagrange-1D-reference-element}
  L_\nu(z)
  &:= \prod_{\substack{\mu=0\\ \mu\neq\nu}}^m \frac{z-\xi_\mu}{\xi_\nu-\xi_\mu} &
  &\text{ for all } \nu\in[0:m],\ z\in\bbbc. 
\end{align}
The corresponding interpolation operator is given by
\begin{align*}
  \mathfrak{I} \colon C[-1,1] &\to \Pi_m, &
                 f &\mapsto \sum_{\nu=0}^m f(\xi_\nu) L_\nu.
\end{align*}
For a general interval $[a,b]\subseteq\bbbr$, we introduce the affine
mapping
\begin{align*}
  \Phi_{[a,b]} \colon \bbbc &\to \bbbc, &
              z &\mapsto \frac{b+a}{2} + \frac{b-a}{2} z,
\end{align*}
that takes $[-1,1]$ bijectively to $[a,b]$ and define the interpolation
operator
\begin{align*}
  \mathfrak{I}_{[a,b]} \colon C[a,b] &\to \Pi_m, &
                 f &\mapsto \sum_{\nu=0}^m f(\xi_{[a,b],\nu}) L_{[a,b],\nu},
\end{align*}
with the transformed interpolation points and Lagrange polynomials
\begin{align}\label{eq:xi_L_trans_def}
  \xi_{[a,b],\nu} &:= \Phi_{[a,b]}(\xi_\nu), &
  L_{[a,b],\nu} &:= L_\nu \circ \Phi_{[a,b]}^{-1} &
  &\text{ for all } \nu\in[0:m].
\end{align}
Tensor product interpolation on an axis-parallel $n$-dimensional box
\begin{equation*}
  B := [a_1,b_1] \times \cdots \times [a_n,b_n]
\end{equation*}
is defined by combining transformed interpolation points and polynomials
for the $n$ coordinate intervals:
we let
\begin{align*}
  \xi_{B,\nu} &:= (\xi_{[a_1,b_1],\nu_1},\ldots,\xi_{[a_n,b_n],\nu_n}) \in B,\\
  L_{B,\nu}(x) &:= L_{[a_1,b_1],\nu_1}(x_1) \cdots L_{[a_n,b_n],\nu_n}(x_n) &
  &\text{ for all } x\in\bbbc^n,\ \nu\in M:=[0:m]^n
\end{align*}
and define the tensor interpolation operator by
\begin{align}
\label{eq:IB}
  \mathfrak{I}_B[f]
  &= \mathfrak{I}_{[a_1,b_1]}\otimes\cdots\otimes\mathfrak{I}_{[a_n,b_n]}[f]
   = \sum_{\nu\in M} f(\xi_{B,\nu}) L_{B,\nu} &
  &\text{ for all } f\in C(B).
\end{align}

\subsection{Directional multilevel approximation of oscillatory functions}
\label{sec:ideas}

\subsubsection{Directional single level approximation}
\label{sec:single-level}
The factorization (\ref{eq:factorization-of-g}) identifies the function 
\begin{equation*}
  (x,y) \mapsto \exp(\bi \kappa \|x-y\|)
\end{equation*}
as the oscillatory part of $k$.
For large $\kappa$, a separable form cannot be obtained by standard
polynomial approximation, as a large degree would be required. 
In order to overcome this obstacle, we follow an idea of Brandt \cite{BR91}, 
Engquist \& Ying \cite{ENYI07}, Messner \textsl{et al.} \cite{MESCDA12} 
and construct a \emph{directional} approximation: 
we introduce a vector $c\in\bbbr^n$ with $\|c\|=1$ and use
$\langle x-y, c \rangle$ as an approximation of $\|x-y\|$:
\begin{align}
  \exp(\bi \kappa \|x-y\|)
  &= \exp(\bi \kappa \langle x-y, c \rangle)
     \exp(\bi \kappa (\|x-y\| - \langle x-y, c \rangle)\notag\\
  &= \exp(\bi \kappa \langle x, c \rangle)
     \overline{\exp(\bi \kappa \langle y, c \rangle)}
     \exp(\bi \kappa (\|x-y\| - \langle x-y, c \rangle)).
       \label{eq:exp_factorization}
\end{align}
Since the first factor depends only on $x$ and the second only on $y$,
a separable approximation of the third term in (\ref{eq:exp_factorization})
gives rise to a separable approximation of the entire product.
Figure~\ref{fi:cone_smooth} suggests that this term is a smooth function
in a cone extending from zero in the chosen direction $c$, so that standard
polynomial interpolation can be employed.
%
%
\begin{figure}
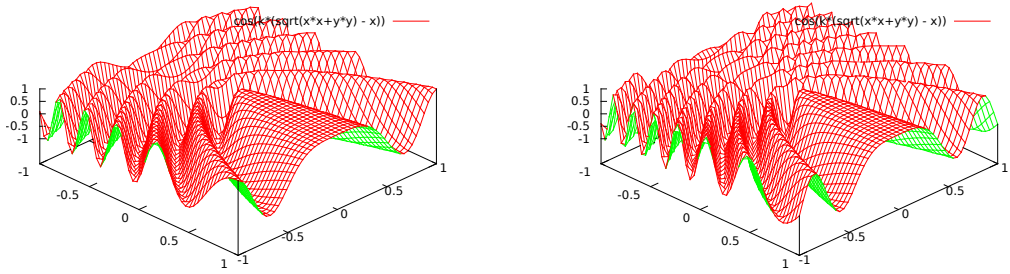

  \pgfdeclareimage[width=7cm]{dirsmooth15c}{fi_dirsmooth15c}
  \pgfdeclareimage[width=7cm]{dirsmooth20c}{fi_dirsmooth20c}
  \begin{center}
    \pgfuseimage{dirsmooth15c}\quad
    \pgfuseimage{dirsmooth20c}
  \end{center}
  \caption{$x\mapsto\cos(\kappa (\|x\|-x_1))$ in $[-1,1]\times[-1,1]$
           for 
           $\kappa \in\{15,20\}$}
  \label{fi:cone_smooth}
\end{figure}
%
Specifically, given axis-parallel target and source boxes
$\tau,\sigma\subseteq\bbbr^n$, we approximate the directionally modified
kernel function
\begin{equation}\label{eq:helmholtz_modified}
  k_c(x,y) := k(x,y) \exp(-\bi \kappa \langle x - y,c\rangle)
\end{equation}
by its interpolating polynomial
\begin{align}
  \tilde k_{c,\tau\sigma}(x,y)
  &:= \sum_{\nu\in M} \sum_{\mu\in M}
        k_c(\xi_{\tau,\nu}, \xi_{\sigma,\mu})
        L_{\tau,\nu}(x) L_{\sigma,\mu}(y) &
  &\text{ for all } x\in \tau,\ y\in \sigma\label{eq:g_c_ts_def}.
\end{align}
Combining the approximation (\ref{eq:g_c_ts_def}) of $k_c$ with
(\ref{eq:exp_factorization}) leads to an approximation of the kernel
function $k$ by 
\begin{align}\label{eq:directional_approximation}
  \tilde k_{\tau\sigma}(x,y)
  &= \sum_{\nu\in M} \sum_{\mu\in M}
     k_c(\xi_{\tau,\nu}, \xi_{\sigma,\mu})
     L_{\tau c,\nu}(x)
     \overline{L_{\sigma c,\mu}(y)} &
  &\text{ for all } x\in \tau,\ y\in \sigma,
\end{align}
where the exponential factors are included in directionally modified Lagrange
polynomials
\begin{align*}
  L_{\tau c,\nu}(x) &:= \exp(\bi \kappa \langle x, c \rangle) L_{\tau,\nu}(x),\\
  L_{\sigma c,\mu}(y) &:= \exp(\bi \kappa \langle y, c \rangle) L_{\sigma,\mu}(y) &
  &\text{ for all } x\in\tau,\ y\in\sigma.
\end{align*}
If $M$ has only few elements, (\ref{eq:directional_approximation}) 
is a short sum of {\em separated} functions.
In fact, if the triple $(\tau,\sigma,c)$ satisfies the
\emph{parabolic admissibility condition} (\ref{eq:admissibility}),
exponential convergence in the polynomial degree $m$ can be expected.
For the three- and two-dimensional Helmholtz kernels, this is proven in
Corollaries~\ref{co:interpolation_error} and
\ref{co:interpolation_error_2D}.
We mention in passing that this result is already sufficient to justify
certain purely algebraic approximation schemes based on orthogonal
factorizations \cite{BO15} that depend only on the existence of
degenerate approximation.

For an analysis of this single-level approximation, it is convenient to 
introduce the plane wave function
\begin{align}
\label{eq:w_c}
  w_c \colon \tau\times\sigma &\to \bbbc, &
           (x,y) &\mapsto \exp(\bi \kappa \langle x-y, c \rangle)
\end{align}
since the approximation (\ref{eq:directional_approximation}) of the
kernel function $k$ can be compactly written as
\begin{equation*}
  \tilde k_{\tau\sigma}
  = w_c\,\mathfrak{I}_{\tau\times\sigma}[\overline{w}_c\,k],
\end{equation*}
i.e., as the combination of multiplication operators and standard
tensor interpolation.

Due to $w_c(x,y)=\exp(\bi \kappa \langle x-y, c \rangle)
=\exp(\bi \kappa \langle x, c \rangle)
 \exp(\bi \kappa \langle y, -c \rangle)$, we have
\begin{equation*}
  \tilde k_{\tau\sigma}
  = \mathfrak{I}_{\tau,c}\otimes\mathfrak{I}_{\sigma,-c}[k]
\end{equation*}
with
\begin{align*}
  \mathfrak{I}_{\tau,c}[u]
  &:= \exp(\bi \kappa \langle c, \punkt \rangle)
      \mathfrak{I}_\tau[\exp(\bi \kappa \langle -c, \punkt \rangle)\,u] &
  &\text{ for all } u\in C(\tau),\\
  \mathfrak{I}_{\sigma,-c}[u]
  &:= \exp(\bi \kappa \langle -c, \punkt \rangle)
      \mathfrak{I}_\sigma[\exp(\bi \kappa \langle c, \punkt \rangle)\,u] &
  &\text{ for all } u\in C(\sigma).
\end{align*}

\subsubsection{Efficient multilevel matrix representation}
\label{sec:algorithm}
The separable structure of the kernel approximation $\tilde k_{\tau\sigma}$
can be exploited algorithmically.
However,  multilevel techniques have additionally to be brought to bear
for the sake of efficiency. 
We describe here a variant that is used in \cite{MESCDA12}. 
To fix ideas, we consider a Galerkin discretization of 
the integral operator $\mathcal{G}$ in the special case that 
$\Gamma \subset \bbbr^n$ is an $(n-1)$-dimensional manifold.
The Galerkin method using a basis $(\varphi_i)_{i\in\Idx}$ leads to the
\emph{stiffness matrix} $G\in\bbbc^{\Idx\times\Idx}$ given by
\begin{align}\label{eq:matrix}
  G_{ij}
  &= \int_\Gamma \varphi_i(x) \int_\Gamma k(x,y) \varphi_j(y) \,dy\,dx &
  &\text{ for all } i,j\in\Idx.
\end{align}
The separable approximation (\ref{eq:directional_approximation}) of the
kernel $k$ can only be used if $k_c$ is sufficiently smooth in
$\tau\times\sigma$, and this is the case if appropriate
\emph{admissibility conditions} hold (below, we will identify 
(\ref{eq:admissibility}) as an appropriate one).  
In order to satisfy the admissibility condition, we recursively split the
index set $\Idx$ into disjoint subsets $\hat\tau$ called \emph{clusters}
and construct axis-parallel \emph{bounding boxes} $\tau$ such that
\begin{align*}
  \supp \varphi_i &\subseteq \tau &
  &\text{ for all } i\in\hat\tau.
\end{align*}
We split $\Idx\times\Idx$ into subsets $\hat\tau\times\hat\sigma$
with clusters $\hat\tau,\hat\sigma$ such that either
\begin{itemize}
  \item $\hat\tau$ and $\hat\sigma$ contain only a small number
    of indices, or
  \item the corresponding boxes $\tau\times\sigma$ satisfy the
    admissibility condition (\ref{eq:admissibility}).
\end{itemize}
In the first case, we store $G|_{\hat\tau\times\hat\sigma}$ directly.
In the second case, we use (\ref{eq:directional_approximation}) to get
\begin{align}
  G_{ij}
  &= \int_\Gamma \varphi_i(x) \int_\Gamma k(x,y) \varphi_j(y) \,dy\,dx
   \approx \int_\Gamma \varphi_i(x) \int_\Gamma
      \tilde k_{\tau\sigma}(x,y) \varphi_j(y) \,dy\,dx\notag\\
  &= \sum_{\nu,\mu\in M}
      \underbrace{k_c(\xi_{\tau,\nu}, \xi_{\sigma,\mu})
                 }_{=: s_{\tau\sigma,\nu\mu}}
      \underbrace{\int_\Gamma \varphi_i(x) L_{\tau c,\nu}(x) \,dx
                 }_{=: v_{\tau c,i \nu}}
      \underbrace{\int_\Gamma \varphi_j(y) \overline{L_{\sigma c,\mu}(y)} \,dy
                 }_{=: \overline{v_{\sigma c,j \mu}}}\notag\\
  &= (V_{\tau c} S_{\tau\sigma} V_{\sigma c}^*)_{ij}
     \qquad\text{ for all } i\in\hat\tau,\ j\in\hat\sigma.
     \label{eq:VSW}
\end{align}
Since the \emph{coupling matrices} $S_{\tau\sigma}\in\bbbc^{M\times M}$
are small, we can afford to store them explicity.

The \emph{basis matrices} $V_{\tau c}\in\bbbc^{\hat\tau\times M}$ are too
large to be stored, but we can take advantage of the fact that the
sets $\hat\tau$ are nodes of a tree so as to obtain
a hierarchical representation of these basis matrices $V_{\tau c}$:
If $\hat\tau$ is a leaf of the tree, we assume that $\hat\tau$ contains
only a few indices and we can afford to store $V_{\tau c}$ directly.
If $\hat\tau$ has a son $\hat\tau'$, we select a direction $c'$ close to
$c$ and use interpolation to approximate
\begin{align}
  L_{\tau c,\nu}(x)
  &= \exp(\bi \kappa \langle x, c \rangle) L_{\tau,\nu}(x)\notag\\
  &= \exp(\bi \kappa \langle x, c' \rangle)
     \exp(\bi \kappa \langle x, c - c' \rangle)
     L_{\tau,\nu}(x)\notag\\
  &\approx \exp(\bi \kappa \langle x, c' \rangle)
     \sum_{\nu'\in M}
     \underbrace{
       \exp(\bi \kappa \langle \xi_{\tau',\nu'}, c - c' \rangle)
       L_{\tau,\nu}(\xi_{\tau',\nu'})
     }_{=: e_{\tau'\tau c,\nu'\nu}} 
     L_{\tau',\nu'}(x).\label{eq:lagrange_reinterpolation}
\end{align}
Therefore
\begin{equation}\label{eq:nested}
  V_{\tau c}|_{\hat\tau'\times M}
  \approx V_{\tau' c'} E_{\tau'\tau c}
\end{equation}
with the \emph{transfer matrix} $E_{\tau'\tau c}\in\bbbc^{M\times M}$.
If we replace $V_{\tau c}$ by an approximation $\widetilde{V}_{\tau c}$
given by $\widetilde{V}_{\tau c}=V_{\tau c}$ if $\hat\tau$ is a leaf and
\begin{align}\label{eq:V_def}
  \widetilde{V}_{\tau c}|_{\hat\tau'\times M}
  &:= \widetilde{V}_{\tau' c} E_{\tau'\tau c} &
  &\text{ for all sons } \hat\tau' \text{ of } \hat\tau
\end{align}
otherwise, we obtain the \emph{directional $\mathcal{H}^2$-matrix
approximation} of $G$.

%
%
\begin{remark}[Directions]
\label{re:directions}
In an algorithmic realization of directional $\mathcal{H}^2$-matrices,
a finite set of directions is associated with each cluster $\hat\tau$.
The necessary directional resolution is given by the admissibility condition
(\ref{eq:adm_directional}), and there are simple algorithms
\cite{BO15,BEKUVE15} for constructing suitable directions.

In spite of the fact that large clusters require a large number of
directions, it can be shown that in typical situations the resulting
directional $\mathcal{H}^2$-matrices require
$\mathcal{O}(N k + \kappa^2 k^2 \log N)$ units of storage,
where $N=\#\Idx$, $k=\#M = (m+1)^n$ \cite{BO15,MESCDA12}.
If $\mathcal{H}$-matrices are used to treat the low-frequency
case, we obtain a similar complexity estimate \cite{BEKUVE15}.
\end{remark}

%
%
\begin{remark}[Algebraic recompression]
In the interest of efficiency, the algorithm in \cite{MESCDA12} 
combines the above techniques with further algebraic recompression 
of the coupling matrices. 
\end{remark}

%
%
\begin{remark}[Nested multilevel]
Nested multilevel structures are essential for poly\-lo\-ga\-rithmic-linear
complexity in the high-frequency setting.
Instead of resorting to interpolation to set up the factorizations
(\ref{eq:VSW}) and (\ref{eq:V_def}), it is also possible to construct
them directly via approximate rank-revealing factorizations based on
a heuristic pivoting strategy as proposed in \cite{BEKUVE15}. 
\end{remark}

\subsubsection{Error analysis via multilevel approximation of the kernel}
\label{sec:error-analysis-via-multilevel}
The goal of the article is to provide a rigorous error analysis 
for the various approximation steps in Section~\ref{sec:algorithm}. 
In the present section, we set the stage for the error analysis by 
casting the analysis in the framework of polynomial interpolation.

Let $(\tau,\sigma,c)$ satisfy the admissibility condition
(\ref{eq:admissibility}).
We fix sequences
\begin{align*}
  \tau_0 &\supseteq \tau_1 \supseteq \cdots \supseteq \tau_L, &
  \sigma_0 &\supseteq \sigma_1 \supseteq \cdots \supseteq \sigma_L
\end{align*}
of axis-parallel boxes and a sequence $c_0,\ldots,c_L\in\bbbr^n$ of
directions such that
\begin{itemize}
  \item $\tau_0=\tau$, $\sigma_0=\sigma$, $c_0=c$,
  \item $\tau_\ell$ is a son of $\tau_{\ell-1}$,
        $\sigma_\ell$ is a son of $\sigma_{\ell-1}$ for all $\ell\in[1:L]$,
        and
  \item $\tau_L$ and $\sigma_L$ are leaf clusters.
\end{itemize}
The re-interpolation (\ref{eq:lagrange_reinterpolation}) means that
we replace
\begin{align}
  v_{\tau c,i\nu}
  &= \int_\Gamma \varphi_i(x) L_{\tau c,\nu}(x) \,dx &
  &\text{ by the approximation}\notag\\
  \widetilde{v}_{\tau c,i\nu}
  &= \int_\Gamma \varphi_i(x)
     \underbrace{\mathfrak{I}_{\tau_L,c_L} \circ \cdots
                 \circ \mathfrak{I}_{\tau_1,c_1}[L_{\tau c,\nu}](x)
                }_{=:\widetilde{L}_{\tau c,\nu}(x)} \,dx &
  &\text{ for all } i\in\hat\tau_L,\ \nu\in M.\label{eq:Ltilde}
\end{align}
Here we use the notation $\widetilde{L}_{\tau c,\nu}$ for the sake of brevity,
in spite of the fact that it depends on the entire sequences
$\tau_0,\ldots,\tau_L$ and $c_0,\ldots,c_L$.
In particular, different approximations are used for different leaf
clusters $\tau_L$.
We can analyze the re-interpolation error by gauging the difference
between $L_{\tau c,\nu}$ and $\widetilde{L}_{\tau c,\nu}$.

An alternative, very closely related approach to estimating the accuracy
of the algorithm described in Section~\ref{sec:algorithm} is to study the
effect of interpolating the kernel functions $k$:
we denote the corresponding interpolation operators by
\begin{align}\label{eq:directional-interpolation-operators}
  \mathfrak{I}_{\tau_\ell \times \sigma_\ell,c_\ell}
  &:= \mathfrak{I}_{\tau_\ell,c_\ell}
      \otimes \mathfrak{I}_{\sigma_\ell,-c_\ell} &
  &\text{ for all } \ell\in[0:L],
\end{align}
and note that our nested interpolation scheme approximates
$k|_{\tau_L\times\sigma_L}$ by
\begin{equation}\label{eq:nested_interpolation}
  \hat k_{\tau\sigma}
  := \mathfrak{I}_{\tau_L \times \sigma_L,c_L} \circ \cdots
      \circ \mathfrak{I}_{\tau_0 \times \sigma_0,c_0}[k].
\end{equation}
The analysis of the algorithm described in Section~\ref{sec:algorithm} 
amounts to estimating the error $k - \hat k_{\tau \sigma}$.
Writing the error as a telescoping sum
\begin{equation*}
  k - \hat k_{\tau\sigma}
  = \sum_{\ell=0}^{L-1} \mathfrak{I}_{\tau_L \times \sigma_L,c_L} \circ \cdots
     \circ \mathfrak{I}_{\tau_{\ell+1} \times \sigma_{\ell+1}}
                  [k - \mathfrak{I}_{\tau_\ell\times\sigma_\ell,c_\ell}[k]]
\end{equation*}
splits the error analysis into two parts: the analysis of
the interpolation errors
$k-\mathfrak{I}_{\tau_\ell \times \sigma_\ell,c_\ell}[k]$ that is the topic
of Sections~\ref{sec:g3D} and \ref{sec:other-kernels}, 
and a stability analysis of the nested interpolation operators
$\mathfrak{I}_{\tau_L \times \sigma_L,c_L}\circ\cdots\circ
\mathfrak{I}_{\tau_{\ell+1} \times \sigma_{\ell+1},c_{\ell+1}}$ for all
$\ell\in[0:L-1]$.
The latter stability analysis is the topic of
Section~\ref{sec:nested_approximation}, where we work out 
how the shrinking rate of the nested boxes
$\tau_\ell \times \sigma_\ell$, $\ell \in [0:L]$
and the differences $\|c_\ell - c_{\ell+1}\|$ of two consecutive directions 
impact the stability of the iterated operator 
$\mathfrak{I}_{\tau_L \times \sigma_L,c_L}\circ\cdots\circ
\mathfrak{I}_{\tau_{\ell+1} \times \sigma_{\ell+1},c_{\ell+1}}$.  
In the interest of future reference, we formulate our findings as 
\begin{align}
\label{eq:nested-interpolation-error-vorne}
  &\|k - \hat k_{\tau\sigma}\|_{\infty,\tau_L \times \sigma_L} 
  \leq \\ 
  & \quad \sum_{\ell=0}^{L-1}
      \| \mathfrak{I}_{\tau_L \times \sigma_L,c_L} \circ \cdots \circ 
         \mathfrak{I}_{\tau_{\ell+1}\times \sigma_{\ell+1},c_{\ell+1}}
      \|_{\infty,\tau_L\times\sigma_L\leftarrow\tau_{\ell+1}\times\sigma_{\ell+1}}
      \|k - \mathfrak{I}_{\tau_\ell\times\sigma_\ell,c_\ell}[k]
      \|_{\infty,\tau_{\ell+1} \times \sigma_{\ell+1}}.\notag
\end{align}

%
%
\section{Single level analysis for the three-dimensional case}
\label{sec:g3D}

We focus here on the analysis of the 3D Helmholtz kernel, i.e.,
we consider $k=g$ (with $g_c$, $\tilde g_{\tau\sigma}$ and
$\tilde g_{c,\tau\sigma}$ defined accordingly).
The 2D Helmholtz kernel $h$ can be studied using similar techniques and 
is discussed briefly in Section~\ref{sec:other-kernels}. 
We mention that our single-level analysis differs from \cite{BEKUVE15,ENYI07} 
in the technique employed; that is, we opted for a ``derivative-free'' approach 
based on complex analysis.

For bounding boxes $\tau,\sigma\subseteq\bbbr^n$ and a direction $c\in\bbbr^n$,
we immediately find
\begin{align*}
  |\exp(\bi \kappa \langle x-y, c \rangle)|
  &= 1 &
  &\text{ for all } x\in \tau,\ y\in \sigma,
\end{align*}
and we can conclude that multiplication with a plane wave does not
change the maximum norm.
This implies
\begin{equation}\label{eq:exp_interpolation_error}
  \| g - \tilde g_{\tau\sigma} \|_{\infty,\tau\times \sigma}
  = \| g_c - \tilde g_{c,\tau\sigma} \|_{\infty,\tau\times \sigma}
\end{equation}
for the approximations $\tilde g_{\tau\sigma}$ and $\tilde g_{c,\tau\sigma}$
defined in (\ref{eq:directional_approximation}) and (\ref{eq:g_c_ts_def}).
This equation allows us to focus on interpolation error estimates
for the directionally modified function $g_c$.

\subsection{Tensor interpolation}
\label{sub:tensor_interpolation}

The error analysis of our scheme has to gauge two sources
of error: the interpolation error associated with 
(\ref{eq:g_c_ts_def}) and the interpolation error arising 
from the nested interpolation  (\ref{eq:nested_interpolation}).
Both cases require error estimates for tensor interpolation. 

Throughout the article, the \emph{Lebesgue constant} $\Lambda_m$ is given by 
\begin{equation}\label{eq:Lambda_m} 
  \Lambda_m := \|\mathfrak{I}\|_{\infty,[-1,1]\leftarrow[-1,1]},
\end{equation}
and we will make the following assumption on $\mathfrak{I}$: 

%
%
\begin{assumption}
\label{assumption:Lambda}
There are constants $C_\Lambda \in \bbbr_{> 0}$ and
$\lambda \in \bbbr_{\geq 1}$ such that
\begin{align}\label{eq:polynomial-growth-Lambda}
  \Lambda_m &\leq C_\Lambda (m+1)^\lambda &
  &\text{ for all } m\in\bbbn_0.
\end{align}
\end{assumption}

%
%
\begin{remark}[Chebyshev interpolation]
A good choice for ${\mathfrak I}$ is interpolation in the 
Chebyshev points $\xi_\nu = \cos\left(\frac{2\nu+1}{2m+2}\right)$,
$\nu\in[0:m]$.
In this case, \cite{RI90} gives 
$\Lambda_m \leq \frac{2}{\pi} \ln (m+1)+1$. Thus, 
Chebyshev interpolation satisfies Assumption~\ref{assumption:Lambda}
with $C_\Lambda=\lambda=1$.
\end{remark}

We can apply tensor arguments to extend the 1D stability assumptions to
the multi-dimensional case.
Let us consider an axis-parallel box
\begin{equation}\label{eq:box-B}
  B = [a_1,b_1]\times\cdots\times[a_n,b_n].
\end{equation}
Recall the tensor product interpolation operator ${\mathfrak I}_{B}$ from 
(\ref{eq:IB}) that is obtained from the 1D interpolation operator
${\mathfrak I}$.
This operator can be written as a product of partial interpolation
operators $I\otimes\cdots\otimes
\mathfrak{I}_{[a_\iota,b_\iota]}\otimes\cdots\otimes I$ that each apply
interpolation only in one coordinate direction $\iota\in[1:n]$.
Since the stability estimate (\ref{eq:Lambda_m}) carries over to
these operators, a simple telescoping sum and the relationship between
partial interpolation and one-dimensional interpolation can be used
to prove the following estimate:

%
%
\begin{lemma}[Tensor interpolation]
\label{le:tensor_interpolation}
Let $B$ be given by (\ref{eq:box-B}). 
For $f\in C(B)$ define
\begin{align}\label{eq:f_x_iota_def}
  f_{x,\iota} \colon [-1,1] &\to \bbbr, &
       t &\mapsto f\left(x_1,\ldots,x_{\iota-1},
                         \Phi_{[a_\iota,b_\iota]}(t),
                         x_{\iota+1},\ldots,x_n\right)
\end{align}
for all $x\in B$ and $\iota\in[1:n]$.
Then 
\begin{equation*}
  \|f-\mathfrak{I}_B[f]\|_{\infty,B}
  \leq \sum_{\iota=1}^n \Lambda_m^{\iota-1}
      \max\{\|f_{x,\iota} - \mathfrak{I}[f_{x,\iota}]\|_{\infty,[a_\iota,b_\iota]}
                 \ :\ x\in B \}.
\end{equation*}
\end{lemma}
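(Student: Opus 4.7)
My plan is to prove the estimate by a standard telescoping decomposition over the $n$ coordinate directions, combined with the fact that partial interpolation in a single coordinate has operator norm $\Lambda_m$.

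First I would introduce, for each $\iota \in [1:n]$, the partial interpolation operator
\begin{equation*}
  \mathfrak{I}_B^{(\iota)} := I \otimes \cdots \otimes I \otimes \mathfrak{I}_{[a_\iota,b_\iota]} \otimes I \otimes \cdots \otimes I,
\end{equation*}
acting by $\mathfrak{I}_{[a_\iota,b_\iota]}$ on the $\iota$-th variable and leaving the others fixed. These operators commute pairwise, and by the product structure of $\mathfrak{I}_B$ given in (\ref{eq:IB}) one has $\mathfrak{I}_B = \mathfrak{I}_B^{(1)} \circ \cdots \circ \mathfrak{I}_B^{(n)}$.

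Next I would write the telescoping identity
\begin{equation*}
  f - \mathfrak{I}_B[f] = \sum_{\iota=1}^n \mathfrak{I}_B^{(1)} \circ \cdots \circ \mathfrak{I}_B^{(\iota-1)} \bigl[(I - \mathfrak{I}_B^{(\iota)})[f]\bigr],
\end{equation*}
take the $\|\cdot\|_{\infty,B}$ norm, and use submultiplicativity of the operator norm. To bound each partial interpolation operator, I would freeze all but the $\iota$-th variable: for any $g \in C(B)$,
\begin{equation*}
  \|\mathfrak{I}_B^{(\iota)}[g]\|_{\infty,B} = \sup_{x \in B} \|\mathfrak{I}_{[a_\iota,b_\iota]}[g_{x,\iota} \circ \Phi_{[a_\iota,b_\iota]}^{-1}]\|_{\infty,[a_\iota,b_\iota]} \leq \Lambda_m \|g\|_{\infty,B},
\end{equation*}
since the Lebesgue constant is invariant under the affine change of variables $\Phi_{[a_\iota,b_\iota]}$. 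Hence $\|\mathfrak{I}_B^{(1)} \circ \cdots \circ \mathfrak{I}_B^{(\iota-1)}\|_{\infty,B\leftarrow B} \leq \Lambda_m^{\iota-1}$.

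Finally, the same slicing argument identifies the partial interpolation error as a one-dimensional error uniformly over the frozen variables:
\begin{equation*}
  \|(I - \mathfrak{I}_B^{(\iota)})[f]\|_{\infty,B} = \max_{x \in B} \|f_{x,\iota} - \mathfrak{I}[f_{x,\iota}]\|_{\infty,[-1,1]},
\end{equation*}
using the definition (\ref{eq:f_x_iota_def}) and once more the affine equivalence of $\mathfrak{I}$ and $\mathfrak{I}_{[a_\iota,b_\iota]}$. Combining these three ingredients yields the claimed bound. The proof is essentially bookkeeping; the only mild pitfall is keeping the commuting partial operators straight and checking that all norms survive the affine reparameterization, but no analytic difficulty arises beyond the assumption $\Lambda_m = \|\mathfrak{I}\|_{\infty,[-1,1]\leftarrow[-1,1]}$.
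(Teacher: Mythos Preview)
Your proposal is correct and follows essentially the same approach as the paper: the discussion preceding the lemma explicitly describes writing $\mathfrak{I}_B$ as a product of partial interpolation operators $I\otimes\cdots\otimes\mathfrak{I}_{[a_\iota,b_\iota]}\otimes\cdots\otimes I$, invoking their stability with constant $\Lambda_m$, and using a telescoping sum together with the reduction to one-dimensional interpolation. Your write-up simply fills in the details of that sketch.
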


In the setting of Lemma~\ref{le:tensor_interpolation} we can find $d$, $p\in\bbbr^n$ such that
\begin{align*}
  d-tp &\in B, &
  f_{x,\iota}(t) &= f(d-tp) &
  &\text{ for all } t\in[-1,1].
\end{align*}
Hence, it suffices to bound the interpolation errors for all functions of the form 
\begin{align*}
  f_{dp} \colon [-1,1] &\to \bbbc, &
      t &\mapsto f(d-tp),
\end{align*}
with $d$, $p\in\bbbr^n$ such that $d-tp\in B$ for all $t\in[-1,1]$.
Note that the latter condition implies $2 \|p\|\leq\diam(B)$.

We are interested in interpolating the function $g_c$.
Taking advantage of the fact that it only depends on the relative
coordinates $x-y$, we obtain the following result:

%
%
\begin{lemma}[Univariate formulation for $g$]
\label{le:univariate_formulation}
Let $\epsilon\in\bbbr_{\geq 0}$, and let $\tau,\sigma\subseteq\bbbr^3$
be axis-parallel boxes.
If we have
\begin{equation}\label{eq:g1_interpolation}
  \|g_{dp} - \mathfrak{I}[g_{dp}]\|_{\infty,[-1,1]} \leq \epsilon
\end{equation}
with
\begin{align}\label{eq:g1_def}
  g_{dp} \colon [-1,1] &\to \bbbc, &
        t &\mapsto \frac{\exp(\bi \kappa (\|d-tp\| - \langle d-tp,c \rangle))}
                        {4 \pi \|d-tp\|},
\end{align}
for all $d$, $p\in\bbbr^3$ that satisfy
\begin{subequations}
\begin{align}
  2 \|p\|
  &\leq \max\{\diam(\tau),\diam(\sigma)\}\label{eq:p_bound},\\
  d-t p &\in \tau-\sigma = \{ x-y\ :\ x\in \tau,\ y\in \sigma \} &
  &\text{ for all } t\in[-1,1],\label{eq:dtp_bound}
\end{align}
\end{subequations}
the directional approximation (\ref{eq:directional_approximation})
error is bounded by
\begin{equation}
  \|g - \tilde g_{\tau\sigma}\|_{\infty,\tau\times \sigma} \leq 6 \Lambda_m^5 \epsilon.
\end{equation}
\end{lemma}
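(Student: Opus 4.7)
The plan is to reduce the six-dimensional tensor interpolation error on $\tau\times\sigma\subset\bbbr^6$ to one-dimensional interpolation errors of the form $g_{dp}-\mathfrak{I}[g_{dp}]$, which are controlled by hypothesis (\ref{eq:g1_interpolation}). By the equality (\ref{eq:exp_interpolation_error}), it suffices to estimate $\|g_c - \mathfrak{I}_{\tau\times\sigma}[g_c]\|_{\infty,\tau\times\sigma}$, since $\tilde g_{c,\tau\sigma}=\mathfrak{I}_{\tau\times\sigma}[g_c]$.

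First I would invoke Lemma~\ref{le:tensor_interpolation} applied to the function $g_c$ on the axis-parallel box $B=\tau\times\sigma\subset\bbbr^6$. This yields
\begin{equation*}
  \|g_c - \mathfrak{I}_{\tau\times\sigma}[g_c]\|_{\infty,\tau\times\sigma}
  \leq \sum_{\iota=1}^{6} \Lambda_m^{\iota-1}
  \max_{(x,y)\in\tau\times\sigma}
  \|(g_c)_{(x,y),\iota} - \mathfrak{I}[(g_c)_{(x,y),\iota}]\|_{\infty,[-1,1]},
\end{equation*}
so everything hinges on showing that each univariate slice of $g_c$ has the form $g_{dp}$ for suitable $d$, $p\in\bbbr^3$ satisfying (\ref{eq:p_bound}) and (\ref{eq:dtp_bound}).

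The key observation is that $g_c(x,y)$ depends only on the difference $u:=x-y$, namely
$g_c(x,y) = \exp(\bi\kappa(\|u\|-\langle u,c\rangle))/(4\pi\|u\|)$.
Thus when the $\iota$-th coordinate of $(x,y)$ is varied along $[-1,1]$ via the affine parametrization $\Phi_{[a_\iota,b_\iota]}$ and all other coordinates are frozen, the induced curve in $u$-space is affine in $t$ and can be written as $u(t)=d-tp$ for a suitable $d\in\bbbr^3$ and a vector $p\in\bbbr^3$ which is a multiple of a standard basis vector. If $\iota$ corresponds to a coordinate of $x$, then $\|p\|=\tfrac12(b_\iota-a_\iota)\leq\tfrac12\diam(\tau)$; if $\iota$ corresponds to a coordinate of $y$, then similarly $\|p\|\leq\tfrac12\diam(\sigma)$. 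Either way, $2\|p\|\leq\max\{\diam(\tau),\diam(\sigma)\}$, so (\ref{eq:p_bound}) holds, and since $u(t)=x(t)-y(t)\in\tau-\sigma$ for every $t\in[-1,1]$, (\ref{eq:dtp_bound}) holds too. The slice function is then exactly $g_{dp}$, so hypothesis (\ref{eq:g1_interpolation}) bounds the one-dimensional interpolation error by $\epsilon$.

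Inserting this uniform bound back into the tensor estimate gives
\begin{equation*}
  \|g_c - \mathfrak{I}_{\tau\times\sigma}[g_c]\|_{\infty,\tau\times\sigma}
  \leq \epsilon \sum_{\iota=1}^{6}\Lambda_m^{\iota-1}
  \leq 6\,\Lambda_m^5\,\epsilon,
\end{equation*}
using $\Lambda_m\geq 1$, which combined with (\ref{eq:exp_interpolation_error}) completes the proof. No step is really an obstacle here; the only point requiring care is the bookkeeping of the reparametrization $u(t)=d-tp$ for each of the six slicing directions and the verification that the bound on $\|p\|$ uses the diameter of the correct box (of $\tau$ for an $x$-coordinate slice, of $\sigma$ for a $y$-coordinate slice), which is why the $\max$ of the two diameters appears in (\ref{eq:p_bound}).
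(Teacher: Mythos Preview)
Your proposal is correct and follows essentially the same approach as the paper's proof: apply Lemma~\ref{le:tensor_interpolation} to $g_c$ on the six-dimensional box $\tau\times\sigma$, identify each univariate slice as a function $g_{dp}$ with $d,p$ satisfying (\ref{eq:p_bound}) and (\ref{eq:dtp_bound}), invoke the hypothesis, and sum. You spell out the verification of (\ref{eq:p_bound}) and (\ref{eq:dtp_bound}) in more detail than the paper does, which is fine.
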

\begin{proof}
We apply Lemma~\ref{le:tensor_interpolation} to
$f:=g_c\in C(\tau\times\sigma)$.
Given $x\in\tau$ and $y\in\sigma$, the functions $f_{(x,y),\iota}$,
$\iota\in[1:6]$, coincide with $g_{dp}$ for certain vectors $d,p\in\bbbr^3$
satisfying (\ref{eq:p_bound}), (\ref{eq:dtp_bound}).
Since we have (\ref{eq:g1_interpolation}) at our disposal for \emph{all}
of these pairs of vectors, we obtain the required estimate for
$\|g_c-\tilde g_{c,\tau\sigma}\|_{\infty,\tau\times\sigma}
= \|g-\tilde g_{\tau\sigma}\|_{\infty,\tau\times\sigma}$.
\end{proof}

\subsection{\texorpdfstring{Holomorphic extension of $g_{dp}$}
                           {Holomorphic extension of gdp}}

In order to obtain bounds for the interpolation error of the
functions $g_{dp}$ defined in (\ref{eq:g1_def}), we consider its
holomorphic extension into a neighborhood of the interval $[-1,1]$. 
The key step is to understand the extension of $t \mapsto \|d - t p\|$. 
In turn, the holomorphic extension of the Euclidean norm
\begin{align*}
  \|x\| &= \sqrt{\langle x, x \rangle} &
  &\text{ for all } x\in\bbbr^n
\end{align*}
requires a suitable extension of the square root, which cannot be 
defined in all of $\bbbc$. We choose the principal branch given by
\begin{align*}
  \sqrt{z} &= \sqrt{|z|} \frac{z+|z|}{|z+|z||} &
  &\text{ for all } z\in\bbbc\setminus\bbbr_{\leq 0},
\end{align*}
which is holomorphic in $\bbbc\setminus\bbbr_{\leq 0}$ and maps to
$\bbbc_+ := \{ z\in\bbbc\ :\ \Re(z)>0 \}$.
In order to identify a subset of $\bbbc$ in which 
$z\mapsto \sqrt{\langle d-zp, d-zp \rangle}$ is holomorphic,
we have to determine the values $z\in\bbbc$ satisfying
\begin{equation*}
  \langle d-zp, d-zp \rangle \not\in\bbbr_{\leq 0}.
\end{equation*}

%
%
\begin{lemma}[Extension of the Euclidean norm]
\label{le:holomorphic_extension}
Let $n\in\bbbn$ and $d,p\in\bbbr^n$ with $p\neq 0$ and define
\begin{gather*}
  w_r := \langle d, p \rangle / \|p\|^2,\qquad
  w_i := \sqrt{\|d\|^2/\|p\|^2 - w_r^2},\qquad
  w := w_r + \bi w_i,\\
  U_{dp} := \bbbc \setminus \{ w_r+\bi y\ :\ y\in\bbbr,\ |y|\geq w_i \}.
\end{gather*}
We have
\begin{align}\label{eq:product_pwz}
  \langle d-zp,d-zp \rangle &= \|p\|^2 (w-z) (\bar w-z) &
  &\text{ for all } z\in\bbbc.
\end{align}
The function
\begin{align}
\label{eq:enorm}
  \enorm \colon U_{dp} &\to \bbbc_+, &
     z &\mapsto \sqrt{\langle d-zp, d-zp \rangle}
                = \|p\| \sqrt{(w-z) (\bar w-z)},
\end{align}
is well-defined and holomorphic.
\end{lemma}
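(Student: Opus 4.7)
The plan is to verify the three claims of the lemma in the order stated. The algebraic identity \eqref{eq:product_pwz} is essentially a matter of matching coefficients of a quadratic polynomial in $z$: both sides expand to polynomials of degree two, and their constant, linear, and quadratic coefficients can be compared directly using the definitions $w_r = \langle d,p\rangle/\|p\|^2$ and $w_i^2 = \|d\|^2/\|p\|^2 - w_r^2$, together with $w+\bar w = 2w_r$ and $w\bar w = w_r^2 + w_i^2$. I would pause only to remark that $w_i$ is a well-defined nonnegative real number, which follows from the Cauchy--Schwarz inequality $\langle d,p\rangle^2 \leq \|d\|^2\|p\|^2$.

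The real work is the second claim, that the product $(w-z)(\bar w - z)$ avoids $\bbbr_{\leq 0}$ for all $z \in U_{dp}$. Writing $z = x + \bi y$, an elementary expansion gives
\begin{equation*}
  (w-z)(\bar w - z) = (w_r - x)^2 + w_i^2 - y^2 - 2\bi y(w_r - x),
\end{equation*}
so vanishing of the imaginary part forces $y = 0$ or $x = w_r$. I would then split into these two cases: in the first, the real part equals $(w_r - x)^2 + w_i^2$, which is nonpositive only if both $x = w_r$ and $w_i = 0$, a situation ruled out by the definition of $U_{dp}$ (that whole vertical line is excluded when $w_i = 0$); in the second, the real part reduces to $w_i^2 - y^2$, which is $\leq 0$ exactly on $\{w_r + \bi y : |y| \geq w_i\}$, again the excluded set. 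Hence $\langle d-zp,d-zp\rangle \in \bbbc \setminus \bbbr_{\leq 0}$ throughout $U_{dp}$, and the principal branch of $\sqrt{\cdot}$ takes values in $\bbbc_+$.

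Holomorphy on $U_{dp}$ is then automatic: $\enorm$ is the composition of the entire polynomial $z \mapsto \langle d-zp,d-zp\rangle$ with the principal branch of $\sqrt{\cdot}$, which is holomorphic on $\bbbc \setminus \bbbr_{\leq 0}$ by the previous step. The only mildly subtle point is the degenerate collinear case $d \parallel p$, where $w_i = 0$ forces the excluded set to be the entire vertical line $\Re(z) = w_r$ rather than two half-lines; handling this cleanly alongside the generic case is the main bookkeeping obstacle, but it falls out uniformly from the real-part/imaginary-part analysis above.
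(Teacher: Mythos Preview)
Your proposal is correct and follows essentially the same route as the paper: both expand $(w-z)(\bar w-z)$ with $z=x+\bi y$ into real and imaginary parts, observe that the imaginary part vanishes only when $y=0$ or $x=w_r$, and then check in each case that a nonpositive real part forces $z$ into the excluded set $\{w_r+\bi y:|y|\ge w_i\}$; holomorphy then follows by composition with the principal square root. The only minor additions in your write-up are the explicit remark that $w_i$ is real by Cauchy--Schwarz and the separate discussion of the degenerate case $w_i=0$, both of which the paper leaves implicit.
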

\begin{proof}
The equality (\ref{eq:product_pwz}) follows from a direct computation using 
$d,p\in\bbbr^n$ and $|w|=\|d\|/\|p\|$:
\begin{align*}
  \langle d - z p, d - zp\rangle
  &= \|d\|^2 - 2 \langle d,p\rangle z + \|p\|^2 z^2
   = \|p\|^2 |w|^2 - \|p\|^2 (w + \bar w) z + \|p\|^2 z^2\\
  &= \|p\|^2 (w - z) (\bar w - z).
\end{align*}
In order to show that $\enorm$ is well-defined, it suffices to demonstrate 
\begin{align*}
  z \in U_{dp} &\Longrightarrow
  \langle d-zp, d-zp \rangle \in \bbbc\setminus\bbbr_{\leq 0} &
  &\text{ for all } z\in\bbbc.
\end{align*}
We use contraposition: We let $z\in\bbbc$ be such that
$\langle d-zp, d-zp \rangle\in\bbbr_{\leq 0}$ and prove that this
implies $z\not\in U_{dp}$.
Let $x,y\in\bbbr$ with $z=x+\bi y$.
We have
\begin{align*}
  \langle d-zp, d-zp \rangle
  &= \|p\|^2 (w-z) (\bar w-z)\\
  &= \|p\|^2 ((w_r - x) + \bi(w_i - y))\,
             ((w_r - x) + \bi(-w_i - y))\\
  &= \|p\|^2 ((w_r - x)^2 - 2 \bi (w_r - x) y + w_i^2 - y^2).
\end{align*}
Due to $\langle d-zp, d-zp \rangle\in\bbbr_{\leq 0}$, the imaginary
part vanishes and the real part is non-positive, so $\|p\|>0$ yields
\begin{align*}
  0 &= 2 (w_r - x) y, &
  0 &\geq (w_r - x)^2 + w_i^2 - y^2.
\end{align*}
If $y=0$, then the inequality gives us $x=w_r$ and $w_i=0\leq |y|$,
i.e., $z\not\in U_{dp}$.
Otherwise, the equation yields $x=w_r$ and the inequality $y^2\geq w_i^2$,
i.e., again $z\not\in U_{dp}$.

Since $z\mapsto \langle d-zp, d-zp \rangle$ is holomorphic in $U_{dp}$ and
maps into the domain of the holomorphic principal square root, the
composed function $\enorm$ is also holomorphic.
\end{proof}

%
%
\begin{figure}
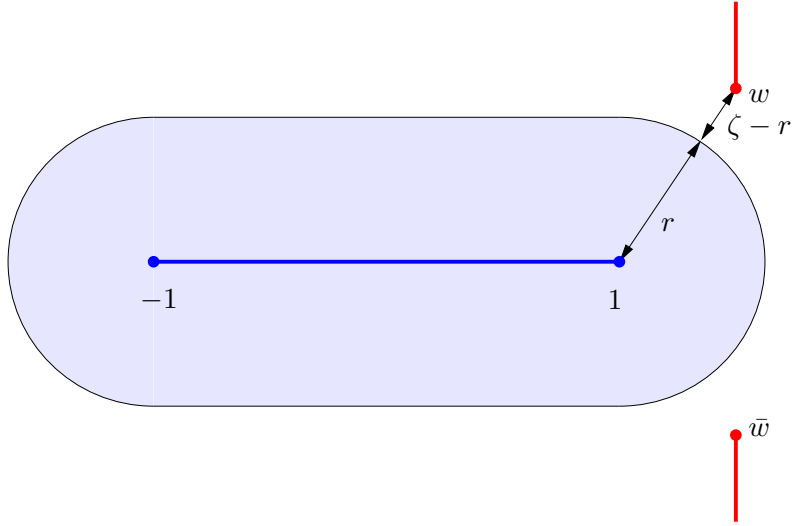

  \pgfdeclareimage[width=10cm]{UrUd}{fi_UrUd}
  \begin{center}
    \pgfuseimage{UrUd}%
    \pgfputat{\pgfxy(-8,3)}{\pgfbox[center,center]{$-1$}}%
    \pgfputat{\pgfxy(-2,3)}{\pgfbox[center,center]{$1$}}%
    \pgfputat{\pgfxy(-0.1,5.7)}{\pgfbox[center,center]{$w$}}%
    \pgfputat{\pgfxy(-0.1,1.3)}{\pgfbox[center,center]{$\bar w$}}%
    \pgfputat{\pgfxy(-1.3,4)}{\pgfbox[center,center]{$r$}}%
    \pgfputat{\pgfxy(-0.1,5.3)}{\pgfbox[center,center]{$\zeta-r$}}%
  \end{center}
  \caption{Domain $U_r$ in relation to the interval $[-1,1]$ and $\{w,\bar w\}$}
\end{figure}

The function $\enorm$ (and thus also $g_{dp}$) is holomorphic on $U_{dp}$. The singularities 
of $\enorm$ closest to the interval $[-1,1]$ are the branch points $w$ and $\bar w$. 
To compute their distance from $[-1,1]$ we use (\ref{eq:product_pwz}) to find for $t \in [-1,1]$ 
\begin{equation}\label{eq:wt_dtp}
  |w-t|^2
  = (w-t) (\bar w-t)
  = \frac{\langle d-tp, d-tp \rangle}{\|p\|^2}
  = \frac{\|d-tp\|^2}{\|p\|^2}
\end{equation}
so that the distance is given by
\begin{equation}\label{eq:zeta_def}
  \zeta := \min\{ |w-t|\ :\ t\in[-1,1] \}
  = \min\left\{ \frac{\|d-tp\|}{\|p\|}\ :\ t\in[-1,1] \right\}.
\end{equation}

%
%
\begin{lemma}[Bound for $\enorm$]
\label{le:norm_estimates}
Let $n\in\bbbn$ and $d$, $p\in\bbbr^n$ with $p\neq 0$,
and let $\enorm$, $w$, and $U_{dp}$ be defined as in
Lemma~\ref{le:holomorphic_extension}.
Let $r\in[0,\zeta)$ and
\begin{equation}\label{eq:Ur_def}
  U_r := \{ z\in\bbbc\ :\ \exists t\in[-1,1]\ :\ |z-t|\leq r \}.
\end{equation}
Then, we have $U_r\subseteq U_{dp}$ and
\begin{align}
\label{eq:lemma:norm_estimates-10}
  |\enorm(z)| &\geq \|p\|(\zeta-r) &
  &\text{ for all } z\in U_r.
\end{align}
\end{lemma}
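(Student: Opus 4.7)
The plan is to attack both assertions directly from the explicit description of $U_{dp}$ and the product formula (\ref{eq:product_pwz}), using only elementary geometry in the complex plane.

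First, I would establish $U_r \subseteq U_{dp}$ by contraposition. The complement of $U_{dp}$ consists of points of the form $z = w_r + \bi y$ with $|y| \geq w_i$. For any such $z$ and any $t \in [-1,1]$, I compute
\begin{equation*}
  |z - t|^2 = (w_r - t)^2 + y^2 \geq (w_r - t)^2 + w_i^2 = |w - t|^2 \geq \zeta^2 > r^2,
\end{equation*}
the last inequality being the hypothesis $r < \zeta$. Hence $z \notin U_r$, which gives the desired inclusion.

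Next, for the lower bound on $|\enorm(z)|$, I fix $z \in U_r$ and choose $t \in [-1,1]$ with $|z - t| \leq r$. By the definition of $\zeta$ we have $|w - t| \geq \zeta$, and since $\bar w$ is the complex conjugate of $w$ and $t \in \bbbr$, also $|\bar w - t| = |w - t| \geq \zeta$. Two applications of the reverse triangle inequality then yield
\begin{align*}
  |w - z|       &\geq |w - t|       - |z - t| \geq \zeta - r, \\
  |\bar w - z|  &\geq |\bar w - t|  - |z - t| \geq \zeta - r.
\end{align*}
Since the principal branch of the square root satisfies $|\sqrt{\alpha}| = \sqrt{|\alpha|}$ for every $\alpha$ in its domain, the representation (\ref{eq:enorm}) gives
\begin{equation*}
  |\enorm(z)| = \|p\| \sqrt{|w - z|\,|\bar w - z|} \geq \|p\|(\zeta - r),
\end{equation*}
which is (\ref{eq:lemma:norm_estimates-10}).

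There is no substantial obstacle here; the proof is essentially a combination of the product factorization from Lemma~\ref{le:holomorphic_extension} and plane geometry. The only point requiring a little care is the first step, where one must verify that the excluded vertical rays $\{w_r + \bi y : |y| \geq w_i\}$ stay at distance at least $\zeta$ from $[-1,1]$; this is precisely the content of the direct computation above, which reduces cleanly to the observation $|w - t|^2 = (w_r - t)^2 + w_i^2$.
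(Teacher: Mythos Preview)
Your proof is correct and follows essentially the same approach as the paper: the contraposition argument for the inclusion $U_r\subseteq U_{dp}$ and the reverse triangle inequality combined with the factorization (\ref{eq:enorm}) for the lower bound are exactly the steps used in the original.
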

\begin{proof}
We prove $U_r\subseteq U_{dp}$ by contraposition.
Let $z\in\bbbc\setminus U_{dp}$.
This implies $z=w_r+\bi y$ with $y\in\bbbr$ and $y^2\geq w_i^2$.
Due to (\ref{eq:wt_dtp}), we have
\begin{equation*}
  |z-t|^2 = (w_r-t)^2 + y^2 \geq (w_r-t)^2 + w_i^2 = |w-t|^2
  = \frac{\|d-tp\|^2}{\|p\|^2} \geq \zeta^2 > r^2
\end{equation*}
and therefore $z\not\in U_r$.

Having proven $U_r\subseteq U_{dp}$, we show 
the lower bound (\ref{eq:lemma:norm_estimates-10}). 
Let $z\in U_r$.
We can find $t\in[-1,1]$ such that $|z-t|\leq r$.
Using (\ref{eq:zeta_def}), this implies
\begin{align*}
  |w-z| &= |w-t+t-z| \geq |w-t| - |t-z| \geq \zeta - r > 0,\\
  |\bar w-z| &= |\bar w-t+t-z| \geq |\bar w-t| - |t-z|
         = |w-t| - |t-z| \geq \zeta - r > 0.
\end{align*}
The proof is completed by observing
\begin{equation*}
  |\enorm(z)|
  = \|p\| \sqrt{|w-z|\,|\bar w-z|}
  \geq \|p\| (\zeta-r).
\end{equation*}
\end{proof}

We also require a bound for
$z \mapsto \exp(\bi \kappa (\enorm(z)-\langle d - zp,c\rangle))$.
Noting 
\begin{align*}
  |\exp(\bi \kappa  (\enorm(z) - \langle d-zp, c \rangle))|
  &\leq \exp(\kappa  |\Im(\enorm(z) - \langle d-zp,c \rangle)|) &
  &\text{ for all } z\in U_{dp},
\end{align*}
our next goal is to find an upper bound for
$|\Im(\enorm(z) - \langle d-zp,c \rangle)|$.
Following the approach of \cite{MESCDA12}, we apply a Taylor expansion
of $\enorm$ around $t\in[-1,1]$ to estimate this in Lemma~\ref{le:taylor} below.

%
%
\begin{lemma}
\label{le:integral}
Let $\zeta_t\in\bbbr_{>0}$ and $r\in[0,\zeta_t)$.
We have
\begin{equation*}
  \int_0^1 \frac{1-s}{(\zeta_t-rs)^3} \,ds = \frac{1}{2 \zeta_t^2 (\zeta_t-r)}.
\end{equation*}
\end{lemma}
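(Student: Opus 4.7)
The plan is direct since this is essentially a routine integration exercise. The factor $1-s$ vanishes at the upper endpoint and, thanks to the hypothesis $r < \zeta_t$, the denominator $(\zeta_t - rs)^3$ stays bounded away from zero on $[0,1]$, so the integrand is well behaved. Integration by parts is the cleanest route, because the vanishing boundary value makes the resulting boundary contribution a single term and reduces the remaining integral to an elementary one. I would not bother with a partial-fraction split of $(1-s)/(\zeta_t-rs)^3$ into pieces of the form $A(\zeta_t-rs)^{-2} + B(\zeta_t-rs)^{-3}$, since by-parts collapses the algebra more neatly.

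First I would set $u(s) := 1 - s$ and $dv := (\zeta_t - rs)^{-3}\,ds$, whence $du = -ds$ and $v(s) = \tfrac{1}{2r}(\zeta_t - rs)^{-2}$. The by-parts identity then reads
\begin{equation*}
  \int_0^1 \frac{1-s}{(\zeta_t - rs)^3}\,ds
  = \left[\frac{1-s}{2r(\zeta_t - rs)^2}\right]_0^1
    + \int_0^1 \frac{ds}{2r(\zeta_t - rs)^2}.
\end{equation*}
The boundary contribution at $s=1$ vanishes because of the factor $1-s$, so only the $s=0$ value $-\tfrac{1}{2r\zeta_t^2}$ survives.

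Second, the remaining integral is an elementary antiderivative,
$\int_0^1 (\zeta_t - rs)^{-2}\,ds = \tfrac{1}{r}\bigl((\zeta_t - r)^{-1} - \zeta_t^{-1}\bigr) = \tfrac{1}{\zeta_t(\zeta_t - r)}$, contributing $\tfrac{1}{2r\zeta_t(\zeta_t-r)}$. Collecting both pieces over the common denominator $2r\zeta_t^2(\zeta_t-r)$, the numerator simplifies from $-(\zeta_t - r) + \zeta_t$ to $r$, cancelling the $r$ in the denominator and producing the claimed value $\tfrac{1}{2\zeta_t^2(\zeta_t-r)}$.

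There is no conceptual obstacle here; the lemma is a self-contained calculus identity that will presumably feed into the Taylor-remainder estimate for $\enorm$ alluded to just before this statement (the exponent $3$ in the denominator matches a second-order Taylor remainder with a $(\zeta_t-r)^{-3}$ bound for the second derivative). The only point requiring mild care is checking that the antiderivative $v(s)$ and the boundary evaluations are well defined, which is immediate from $r \in [0,\zeta_t)$.
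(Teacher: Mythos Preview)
Your integration-by-parts argument is correct for $r>0$ and yields the claimed value. The only slip is in the closing remark: the antiderivative $v(s)=\tfrac{1}{2r}(\zeta_t-rs)^{-2}$ is \emph{not} well defined when $r=0$, so ``immediate from $r\in[0,\zeta_t)$'' is too quick. The case $r=0$ must be handled separately (and trivially: the integrand is $(1-s)/\zeta_t^3$, integrating to $1/(2\zeta_t^3)$), exactly as the paper does.

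For comparison, the paper's proof simply writes down the closed-form antiderivative $\dfrac{(1+\zeta_t/r)-2s}{2r(\zeta_t-rs)^2}$ and leaves the verification to the reader, after first disposing of $r=0$. Your by-parts route is a bit more transparent in that it shows how one might \emph{find} such an antiderivative rather than merely verify it; the paper's version is shorter but requires the reader to differentiate and check. Both are elementary and essentially equivalent.
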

\begin{proof}
The proof is straightforward for $r=0$.
For $r>0$, we note that the function
\begin{align*}
  [0,1] &\to \bbbr, &
  s &\mapsto \frac{(1+\zeta_t/r) - 2 s}
                  {2 r (\zeta_t - rs)^2},
\end{align*}
is the antiderivative of the integrand. 
\end{proof}

%
%
\begin{lemma}[Exponent bound]
\label{le:taylor}
Let $d$, $p\in\bbbr^3$ with $p\neq 0$ and $c\in\bbbr^3$ with $\|c\|=1$.
Let $r\in[0,\zeta)$ with $\zeta$ given in (\ref{eq:zeta_def}), 
and let $U_r$, $U_{dp}$ be defined as
in Lemmas~\ref{le:holomorphic_extension}, \ref{le:norm_estimates}. 
For every $z\in U_r$, there is a $t\in[-1,1]$ such that
\begin{equation}\label{eq:fe_bound}
  |\Im(\enorm(z) - \langle d-zp,c \rangle)|
  \leq \|p\| \left( \left\|\frac{d-tp}{\|d-tp\|} - c \right\| r
                     + \frac{1}{2 (\zeta-r)} r^2 \right).
\end{equation}
\end{lemma}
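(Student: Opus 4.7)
The plan is to use Taylor expansion of $\enorm$ around a real point $t\in[-1,1]$ close to $z$, and combine it with the linearity of $z\mapsto \langle d-zp,c\rangle$.

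\medskip

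\noindent\textbf{Setup.} Given $z\in U_r$, pick $t\in[-1,1]$ with $|z-t|\leq r$ and set $h:=z-t$. By Lemma~\ref{le:holomorphic_extension} the function $\enorm$ is holomorphic on $U_{dp}\supseteq U_r$, so I would write Taylor's formula with integral remainder:
\[
  \enorm(z) = \enorm(t) + \enorm'(t)\,h + \int_0^1 (1-s)\,\enorm''(t+sh)\,h^2\,ds.
\]
Subtracting the linear function $z\mapsto \langle d-zp,c\rangle = \langle d-tp,c\rangle - \langle p,c\rangle\,h$ and observing that both $\enorm(t)=\|d-tp\|$ and $\langle d-tp,c\rangle$ are real, the imaginary part of $\enorm(z)-\langle d-zp,c\rangle$ reduces to the imaginary part of
\[
  \bigl(\enorm'(t)+\langle p,c\rangle\bigr)h + \int_0^1 (1-s)\,\enorm''(t+sh)\,h^2\,ds.
\]

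\medskip

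\noindent\textbf{Linear term.} Using $\enorm(z)^2 = \|p\|^2 (w-z)(\bar w-z)$ and differentiating at the real point $t$, I would compute
\[
  \enorm'(t) = -\frac{\langle d-tp,p\rangle}{\|d-tp\|},
\]
so that $\enorm'(t)+\langle p,c\rangle = \bigl\langle p,\ c - (d-tp)/\|d-tp\|\bigr\rangle$. Cauchy--Schwarz together with $|h|\leq r$ then produces the first summand $\|p\|\,\bigl\|(d-tp)/\|d-tp\|-c\bigr\|\,r$.

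\medskip

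\noindent\textbf{Remainder term.} This is where the real work happens. From $\enorm(z)^2 = \|p\|^2 q(z)$ with $q(z)=(w-z)(\bar w-z)$, a direct calculation gives
\[
  \enorm''(z) = \frac{\|p\|^4\,w_i^2}{\enorm(z)^3},
\]
using the identity $q(z)-(q'(z)/2)^2 = |w|^2-w_r^2 = w_i^2$. For $s\in[0,1]$, the point $t+sh$ lies in $U_{sr}$, so Lemma~\ref{le:norm_estimates} yields $|\enorm(t+sh)|\geq \|p\|(\zeta-sr)$, giving
\[
  \left|\int_0^1 (1-s)\,\enorm''(t+sh)\,h^2\,ds\right|
  \leq \|p\|\,w_i^2\,r^2 \int_0^1 \frac{1-s}{(\zeta-sr)^3}\,ds
  = \frac{\|p\|\,w_i^2\,r^2}{2\,\zeta^2(\zeta-r)}
\]
by Lemma~\ref{le:integral}. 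Finally, the elementary geometric observation $w_i\leq \zeta$ (since $\bar w$ lies at vertical distance $w_i$ from the real axis, hence at distance at least $w_i$ from $[-1,1]$) absorbs the $w_i^2/\zeta^2$ factor, leaving the desired bound $\|p\|\,r^2/(2(\zeta-r))$.

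\medskip

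\noindent\textbf{Main obstacle.} The only nontrivial step is the explicit formula for $\enorm''$ and the recognition that the awkward factor $w_i^2$ appearing in it is dominated by $\zeta^2$; without this observation one would obtain a bound of the form $\|p\|\,(w_i/\zeta)^2\,r^2/(2(\zeta-r))$ with an extraneous geometric factor. Everything else is bookkeeping: linearity of $\langle d-zp,c\rangle$, Taylor's theorem, and Cauchy--Schwarz.
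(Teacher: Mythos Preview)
Your proof is correct and follows essentially the same approach as the paper: Taylor-expand $\enorm$ about a real point $t$, bound the linear term via Cauchy--Schwarz, and control the quadratic remainder via the explicit second derivative combined with Lemma~\ref{le:integral}. The only cosmetic difference is in the remainder bookkeeping: the paper writes the constant numerator of $\enorm''$ as $\|d\|^2\|p\|^2-\langle d,p\rangle^2\leq\|d-tp\|^2\|p\|^2$ and bounds the denominator pointwise by $\|p\|^3(|w-t|-rs)^3$ (applying Lemma~\ref{le:integral} with $\zeta_t=|w-t|$, then $\zeta_t\geq\zeta$), whereas you write the same numerator as $\|p\|^4 w_i^2$, invoke Lemma~\ref{le:norm_estimates} for the denominator, and close with the equivalent observation $w_i\leq\zeta$.
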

\begin{proof}
Let $z\in U_r$.
Due to Lemma~\ref{le:norm_estimates}, this implies $z\in U_{dp}$.
By definition, we can find $t\in[-1,1]$ with $|z-t|\leq r$.
We have
\begin{align*}
  \enorm(z) &= \sqrt{\langle d-zp, d-zp \rangle}
   = \langle d-zp, d-zp \rangle^{1/2},\\
  \enorm'(z) &= - 
\frac{\langle p, d-zp \rangle}{\langle d-zp, d-zp \rangle^{1/2}}
   = \frac{-\langle p, d-zp \rangle}{\enorm(z)},\\
  \enorm''(z) &= \frac{\langle p, p \rangle \enorm(z)
                  + \langle p, d-zp \rangle \enorm'(z)}
                 {\enorm(z)^2}
   = \frac{\langle p, p \rangle \enorm(z)
           - \langle p, d-zp \rangle^2 / \enorm(z)}
          {\enorm(z)^2}\\
  &= \frac{\langle p, p \rangle \enorm(z)^2
           - \langle p, d-zp \rangle^2}
          {\enorm(z)^3}
   = \frac{\langle d-zp, d-zp \rangle \langle p, p \rangle
           - \langle p, d-zp \rangle^2}
          {\enorm(z)^3}.
\end{align*}
We use a Taylor expansion of $\enorm$ around $t$. More precisely, 
with the parametrization
\begin{align*}
  \hat z \colon [0,1] &\to U_{dp}, &
                s &\mapsto \hat z_s := t + (z-t) s,
\end{align*}
we have $\hat z_0=\hat z(0)=t$, $\hat z_1=\hat z(1)=z$, and
$\hat z'(s)=z-t$ for all $s\in[0,1]$,
and the Taylor expansion of $\enorm \circ \hat z$ around $s=0$  yields
\begin{equation*}
  \enorm(z) = (\enorm\circ\hat z)(1)
    = \enorm(t) + \enorm'(t) (z-t)
       + \int_0^1 \enorm''(\hat z_s) (1-s) \,ds (z-t)^2.
\end{equation*}
Hence, we obtain the equation
\begin{align*}
  \enorm(z) &- \langle d-zp, c \rangle\\
  &= \enorm(t) + \enorm'(t) (z-t)
        + \int_0^1 \enorm''(\hat z_s) (1-s) \,ds (z-t)^2
   - \langle d-zp, c \rangle\\
  &= \|d-tp\| - \langle d-zp, c \rangle
        - \frac{\langle d-tp, p \rangle}{\|d-tp\|} (z-t)\\
  &\qquad + \int_0^1 \frac{\langle d-\hat z_s p, d-\hat z_s p \rangle
                        \langle p, p \rangle
                        - \langle p, d-\hat z_s p \rangle^2}
                        {\enorm(\hat z_s)^3} (1-s) \,ds (z-t)^2\\
  &= \left\langle d-zp, \frac{d-tp}{\|d-tp\|} - c \right\rangle\\
  &\qquad + \int_0^1 \frac{\langle d-\hat z_s p, d-\hat z_s p \rangle
                        \langle p, p \rangle
                        - \langle p, d-\hat z_s p \rangle^2}
                        {\enorm(\hat z_s)^3} (1-s) \,ds (z-t)^2 \\
&=: S_1 + S_2. 
\end{align*}
We take a closer look at the integrand of $S_2$. 
For any $z\in\bbbc$, we find
\begin{equation}\label{eq:cauchy_schwarz_z}
  \langle d-zp, d-zp \rangle \langle p, p \rangle
     - \langle d-zp, p \rangle^2
  = \|d\|^2 \|p\|^2 - \langle d, p \rangle^2.
\end{equation}
For any $s\in[0,1]$, applying (\ref{eq:cauchy_schwarz_z}) twice produces 
\begin{align}
\nonumber 
  |\langle d-\hat z_s p &, d-\hat z_s p \rangle \langle p, p \rangle
      - \langle d-\hat z_s p, p \rangle^2|
   = \left| \|d\|^2 \|p\|^2 - \langle d, p \rangle^2 \right| 
   =  \|d\|^2 \|p\|^2 - \langle d, p \rangle^2 
\\
  &= \langle d-t p, d-t p \rangle \langle p, p \rangle
        - \langle d-t p, p \rangle^2
   \leq \|d - t p\|^2 \|p\|^2.
\label{eq:foo}
\end{align}
Using (\ref{eq:product_pwz}), we obtain 
\begin{equation}
\label{eq:fooo}
  \frac{\|d-tp\|^2 \|p\|^2}{|\enorm(\hat z_s)^3|}
   = \frac{\|p\|^4 |w-t|^2}{\|p\|^3 |w-\hat z_s|^3}
   \leq \frac{\|p\|\,|w-t|^2}{(|w-t|-|z-t|s)^3}
   \leq \frac{\|p\|\,|w-t|^2}{(|w-t|-rs)^3}.
\end{equation}
Inserting (\ref{eq:foo}) and (\ref{eq:fooo}) in the integrand of $S_2$ 
and applying Lemma~\ref{le:integral} with $\zeta_t := |w-t|$ yields
\begin{align*}
| S_2|    &\stackrel{(\ref{eq:foo})}{\leq} \left|\int_0^1 \frac{\|d-tp\|^2 \|p\|^2 }
                       {|\enorm(\hat z_s)|^3}
           (1-s) \,ds (z-t)^2 \right|\\
  &\qquad \stackrel{(\ref{eq:fooo})}{\leq} \|p\|\,|w-t|^2
                \int_0^1 \frac{1-s}{(\zeta_t-rs)^3} \,ds |z-t|^2
   = \frac{\|p\|\,|w-t|^2}{2 \zeta_t^2 (\zeta_t-r)} |z-t|^2
     \leq \frac{\|p\|}{2 (\zeta_t-r)} r^2.
\end{align*}
In order to estimate $S_1$, we write $z = x + \bi y$ with $x$, $y\in\bbbr$.  
This implies $z-t=(x-t)+\bi y$ and $|z-t|\geq |y|$. With 
the Cauchy-Schwarz inequality we get  
\begin{align*}
|\Im S_1 | & = 
  \left|\Im\left(\left\langle d-zp, \frac{d-tp}{\|d-tp\|}
         - c \right\rangle\right)\right|
  = \left|\left\langle yp, \frac{d-tp}{\|d-tp\|} - c \right\rangle\right|
   \leq |y|\,\|p\| \left\| \frac{d-tp}{\|d-tp\|} - c \right\|\\
  &\leq \|p\| \left\| \frac{d-tp}{\|d-tp\|} - c \right\| |z-t|
   \leq \|p\| \left\| \frac{d-tp}{\|d-tp\|} - c \right\| r.
\end{align*}
Combining the estimates for $|\Im S_1|$ and $|S_2$ with $\zeta_t\geq\zeta$
gives us (\ref{eq:fe_bound}).
\end{proof}

\subsection{Admissibility conditions}
\label{su:admissibility}

In order to obtain useful estimates for the interpolation error of $g_{dp}$,
we have to control the absolute value of the holomorphic
extension
\begin{align*}
  g_{dp} \colon U_r &\to \bbbc, &
       z &\mapsto \frac{\exp(\bi \kappa (\enorm(z)
                                         - \langle d-zp,c \rangle))}
                       {4 \pi \enorm(z)},
\end{align*}
of $g_{dp}$ (cf. (\ref{eq:g1_def})). 
For the denominator, $d-tp\in \tau-\sigma$ implies that we have to ensure
that $\tau$ and $\sigma$ are well-separated.
This is guaranteed by the \emph{standard admissibility condition}
\begin{subequations}
\label{eq:admissibility}
\begin{equation}\label{eq:adm_standard}
  \max\{\diam(\tau),\diam(\sigma)\} \leq \eta_2 \dist(\tau,\sigma),
\end{equation}
where $\eta_2\in\bbbr_{>0}$ is a parameter that can be chosen to balance
the computational complexity and the speed of convergence.

For the numerator, we have
\begin{equation*}
  |\exp(\bi \kappa (\enorm(z) - \langle d-zp, c \rangle))|
  \leq \exp(\kappa |\Im(\enorm(z) - \langle d-zp, c \rangle)|),
\end{equation*}
and Lemma~\ref{le:taylor} suggests that we should find a bound
$C\in\bbbr_{\geq 0}$ satisfying
\begin{align*}
  \kappa \|p\| \frac{r}{2 (\zeta-r)} &\leq C, &
  \kappa \|p\| \left\|\frac{d-tp}{\|d-tp\|} - c\right\|
  &\leq C &
  &\text{ for all } t\in[-1,1],
\end{align*}
with a suitable $r\in[0,\zeta)$.
We first consider the second inequality involving the direction $c$.
Instead of looking for a bound for all $t\in[-1,1]$, we only consider
the centers $m_\tau\in\tau$ and $m_\sigma\in\sigma$ of the two
boxes and require
\begin{equation}\label{eq:adm_directional}
  \kappa \left\| \frac{m_\tau-m_\sigma}{\|m_\tau-m_\sigma\|} - c \right\|
  \leq \frac{\eta_1}{\max\{\diam(\tau),\diam(\sigma)\}}
\end{equation}
for a second admissibility parameter $\eta_1\in\bbbr_{>0}$.
In order to obtain the required estimate, we have to ensure that
$m_\tau-m_\sigma$ is sufficiently close to $d-tp$ by using the condition
\begin{equation}\label{eq:adm_parabolic}
  \kappa \max\{ \diam^2(\tau), \diam^2(\sigma) \}
  \leq \eta_2 \dist(\tau, \sigma).
\end{equation}
\end{subequations}

%
%
\begin{lemma}[Approximate directions]
\label{le:approximate_directions}
Assume that $\tau,\sigma$, and $c$ satisfy the conditions
(\ref{eq:adm_directional}) and (\ref{eq:adm_parabolic}).
Let $d$, $p\in\bbbr^n$ be vectors satisfying (\ref{eq:p_bound}) and
(\ref{eq:dtp_bound}).
Then we have
\begin{align*}
  \left\| \frac{d - t p}{\|d - t p\|} - c \right\|
  &\leq \frac{\eta_1+\eta_2}{2 \kappa \|p\|} &
  &\text{ for all } t\in[-1,1].
\end{align*}
\end{lemma}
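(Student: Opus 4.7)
The plan is to write $d-tp$ as an element of $\tau-\sigma$ (by (\ref{eq:dtp_bound})) and compare its unit vector to the unit vector of the "reference" difference $m_\tau - m_\sigma$, then invoke the triangle inequality
\begin{equation*}
  \left\|\frac{d-tp}{\|d-tp\|} - c\right\|
  \leq \left\|\frac{d-tp}{\|d-tp\|} - \frac{m_\tau-m_\sigma}{\|m_\tau-m_\sigma\|}\right\|
     + \left\|\frac{m_\tau-m_\sigma}{\|m_\tau-m_\sigma\|} - c\right\|.
\end{equation*}
The second summand is controlled directly by the directional admissibility condition (\ref{eq:adm_directional}): its right-hand side is $\eta_1/(\kappa\max\{\diam(\tau),\diam(\sigma)\})$, and (\ref{eq:p_bound}) gives $\max\{\diam(\tau),\diam(\sigma)\}\geq 2\|p\|$, so this term is at most $\eta_1/(2\kappa\|p\|)$.

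The main work is the first summand. Setting $u:=m_\tau-m_\sigma$ and $v:=d-tp$, both vectors lie in $\tau-\sigma$, hence $\|u\|,\|v\|\geq\dist(\tau,\sigma)$. Since $m_\tau$ and $m_\sigma$ are midpoints of axis-parallel boxes, every point of $\tau$ (resp.\ $\sigma$) is within $\tfrac12\diam(\tau)$ (resp.\ $\tfrac12\diam(\sigma)$) of its center, so by the triangle inequality
\begin{equation*}
  \|v-u\| \leq \tfrac12\diam(\tau)+\tfrac12\diam(\sigma) \leq \max\{\diam(\tau),\diam(\sigma)\}=:D.
\end{equation*}

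The key inequality I would invoke is the sharp bound for the difference of normalized vectors,
\begin{equation*}
  \left\|\tfrac{v}{\|v\|}-\tfrac{u}{\|u\|}\right\|\leq \frac{\|v-u\|}{\sqrt{\|u\|\,\|v\|}},
\end{equation*}
which follows from writing the left-hand side as $2\sin(\theta/2)$ (where $\theta$ is the angle between $u$ and $v$) and applying the law of cosines to $\|v-u\|^2\geq 4\|u\|\|v\|\sin^2(\theta/2)$. Combining this with $\|u\|,\|v\|\geq\dist(\tau,\sigma)$ and the bound on $\|v-u\|$ gives the first summand $\leq D/\dist(\tau,\sigma)$. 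Finally, the parabolic admissibility condition (\ref{eq:adm_parabolic}) yields $D/\dist(\tau,\sigma)\leq\eta_2/(\kappa D)\leq\eta_2/(2\kappa\|p\|)$ via (\ref{eq:p_bound}).

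Adding the two estimates produces the claimed bound $(\eta_1+\eta_2)/(2\kappa\|p\|)$. The only subtlety is choosing the correct normalized-vector inequality: the cruder estimate $\|v/\|v\|-u/\|u\|\|\leq 2\|v-u\|/\min(\|u\|,\|v\|)$ would lose a factor of two and miss the target constant, so the geometric/angle argument above (or an equivalent computation) is essential.
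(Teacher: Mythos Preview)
Your argument is correct and follows the same overall structure as the paper: split via the triangle inequality through $(m_\tau-m_\sigma)/\|m_\tau-m_\sigma\|$, control the second summand by (\ref{eq:adm_directional}) together with (\ref{eq:p_bound}), bound $\|v-u\|$ by $q:=\max\{\diam(\tau),\diam(\sigma)\}$ using that $m_\tau,m_\sigma$ are box centers, and finish with (\ref{eq:adm_parabolic}).

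The only genuine difference is the inequality used for the difference of normalized vectors. You invoke the sharp angle-based bound
\[
  \Bigl\|\tfrac{v}{\|v\|}-\tfrac{u}{\|u\|}\Bigr\|\le \frac{\|v-u\|}{\sqrt{\|u\|\,\|v\|}},
\]
proved via $2(1-\cos\theta)\|u\|\|v\|\le \|u\|^2+\|v\|^2-2\|u\|\|v\|\cos\theta$, i.e.\ AM--GM. The paper instead rescales both vectors by $\eta_2/(\kappa q^2)$ so that each has norm at least $1$ and then uses that the radial projection onto the unit sphere is $1$-Lipschitz on the exterior of the unit ball (citing \cite[Lemma~7]{BO15}); this yields
\[
  \Bigl\|\tfrac{v}{\|v\|}-\tfrac{u}{\|u\|}\Bigr\|\le \frac{\eta_2}{\kappa q^2}\,\|v-u\|.
\]
Both routes produce the same final bound $\eta_2/(\kappa q)\le \eta_2/(2\kappa\|p\|)$ for the first summand, so there is no loss either way. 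Your remark about the ``cruder estimate with a factor $2$'' is well taken, but note that the paper's projection argument also avoids that factor; it is an equally valid alternative to your trigonometric computation.
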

\begin{proof}
Let $t\in[-1,1]$ and $q:=\max\{\diam(\tau),\diam(\sigma)\}$.
(\ref{eq:dtp_bound}) and (\ref{eq:adm_parabolic}) yield
\begin{align*}
  \|d - t p\|
  &\geq \dist(\tau, \sigma)
   \geq \frac{\kappa}{\eta_2} q^2, &
  \left\| (d-t p) \frac{\eta_2}{\kappa q^2} \right\| &\geq 1.
\intertext{Due to $m_\tau\in \tau$ and $m_\sigma\in \sigma$, we can apply
(\ref{eq:adm_parabolic}) to find}
  \|m_\tau-m_\sigma\|
  &\geq \dist(\tau, \sigma)
   \geq \frac{\kappa}{\eta_2} q^2, &
  \left\| (m_\tau-m_\sigma) \frac{\eta_2}{\kappa q^2} \right\| &\geq 1.
\end{align*}
Since projecting two points outside the unit ball to its surface does
not increase their distance (cf. \cite[Lemma~7]{BO15}), we obtain
\begin{equation*}
  \left\| \frac{d-t p}{\|d-t p\|} - \frac{m_\tau-m_\sigma}{\|m_\tau-m_\sigma\|} \right\|
  \leq \| (d-t p) - (m_\tau-m_\sigma) \| \frac{\eta_2}{\kappa q^2}.
\end{equation*}
Due to (\ref{eq:dtp_bound}), we can find $x\in \tau$ and $y\in \sigma$
with $d-t p=x-y$ and obtain
\begin{equation*}
  \|(d-t p) - (m_\tau-m_\sigma)\|
  = \|(x-m_\tau) - (y-m_\sigma)\|
  \leq q/2 + q/2 = q.
\end{equation*}
Combining the estimates with (\ref{eq:adm_directional}) yields
\begin{align*}
  \left\| \frac{d-t p}{\|d-t p\|} - c \right\|
  &\leq \left\| \frac{d-t p}{\|d-t p\|}
                - \frac{m_\tau-m_\sigma}{\|m_\tau-m_\sigma\|} \right\|
      + \left\| \frac{m_\tau-m_\sigma}{\|m_\tau-m_\sigma\|} - c \right\|\\
  &\leq \| (d-t p) - (m_\tau-m_\sigma) \| \frac{\eta_2}{\kappa q^2}
      + \frac{\eta_1}{\kappa q}
   \leq \frac{\eta_2+\eta_1}{\kappa q}.
\end{align*}
To complete the proof, we recall that (\ref{eq:p_bound}) yields
$2 \|p\| \leq q$.
\end{proof}

The condition (\ref{eq:adm_directional}) is trivially satisfied by
$c=0$ if $\kappa \max\{\diam(\tau),\diam(\sigma)\} \leq \eta_1$ holds,
i.e., if we are in the low-frequency setting.
Choosing $c=0$ is particularly attractive, since it means that we use
standard polynomial interpolation.  We therefore require
\begin{equation}\label{eq:c=0}
  \kappa \max\{\diam(\tau),\diam(\sigma)\} \leq \eta_1
  \quad\Longrightarrow\quad c = 0.
\end{equation}
We collect our findings in the following definition.

%
%
\begin{definition}[Parabolic admissibility]
A triple $(\tau,\sigma,c)$ satisfies the \emph{parabolic admissibility
condition} if the three conditions (\ref{eq:admissibility}) together
with (\ref{eq:c=0}) hold.
\end{definition}

\subsection{Interpolation error}

The result of Lemma~\ref{le:taylor} provides us with an upper bound
for the exponential term in the numerator of the definition
(\ref{eq:g1_def}) of $g_{dp}$, while Lemma~\ref{le:norm_estimates}
provides us with a lower bound for the denominator.
Since we have assumed stability of the interpolation scheme, we 
only have to prove existence of a good polynomial approximation of $g_{dp}$.  
Following \cite[Chapter~7]{DELO93}, the existence of a holomorphic
extension in a \emph{Bernstein elliptic disc}
\begin{align}\label{eq:bernstein_disc}
  \mathcal{D}_\varrho
  &:= \left\{ z=x+\bi y\ :\ x,y\in\bbbr,
         \left(\frac{2x}{\varrho+1/\varrho}\right)^2
         + \left(\frac{2y}{\varrho-1/\varrho}\right)^2 < 1 \right\} &
  &\text{ for all } \varrho\in\bbbr_{>1}
\end{align}
already implies the existence of such an approximation.

%
%
\begin{lemma}[polynomial approximability]
\label{le:existence}
Let $\hat\varrho\in\bbbr_{>1}$, and let
$f:\mathcal{D}_{\hat\varrho}\to\bbbc$ be holomorphic.
Given $\varrho\in(1,\hat\varrho)$ and $m\in\bbbn$, there is a polynomial
$\pi \in \Pi_ m$ of degree $m$ such that
\begin{equation*}
  \|f-\pi\|_{\infty,[-1,1]}
  \leq \frac{2}{\varrho-1} \varrho^{-m}
        \max\{ |f(z)|\ :\ z\in\overline{\mathcal{D}}_\varrho \}.
\end{equation*}
\end{lemma}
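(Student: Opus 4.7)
The plan is to expand $f$ in Chebyshev polynomials and choose $\pi$ to be the truncation of this expansion at degree $m$; the decay of the Chebyshev coefficients is then a direct consequence of the holomorphic extension to $\mathcal{D}_{\hat\varrho}$.

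First, I would set up the Joukowski parametrization $w\mapsto z(w):=\tfrac{1}{2}(w+w^{-1})$, which maps the circle $\{|w|=r\}$ (for $r>1$) bijectively onto the boundary of the Bernstein disc $\mathcal{D}_r$, and which sends the annulus $\{1/\hat\varrho<|w|<\hat\varrho\}$ onto $\mathcal{D}_{\hat\varrho}\setminus[-1,1]$. Setting $F(w):=f(z(w))$, the function $F$ is holomorphic on this annulus and satisfies the symmetry $F(w)=F(1/w)$.

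Second, I would expand $F$ in its Laurent series $F(w)=\sum_{n\in\mathbb{Z}} c_n w^n$. The symmetry gives $c_{-n}=c_n$, so grouping pairs and using $T_n(z(w))=\tfrac{1}{2}(w^n+w^{-n})$ yields the Chebyshev series
\begin{equation*}
  f(z) = c_0 + 2\sum_{n=1}^\infty c_n T_n(z),
\end{equation*}
which converges uniformly on $[-1,1]$. Applying the Cauchy coefficient formula on the circle $|w|=\varrho$ and bounding $|F(w)|\le M(\varrho):=\max\{|f(z)|:z\in\overline{\mathcal{D}}_\varrho\}$ gives the standard decay
\begin{equation*}
  |c_n| \le M(\varrho)\,\varrho^{-|n|} \qquad \text{for all } n\in\mathbb{Z}.
\end{equation*}

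Third, I would define $\pi:=c_0+2\sum_{n=1}^m c_n T_n\in\Pi_m$ and estimate the tail on $[-1,1]$ using $\|T_n\|_{\infty,[-1,1]}=1$ together with the geometric series:
\begin{equation*}
  \|f-\pi\|_{\infty,[-1,1]}
  \le 2\sum_{n=m+1}^\infty |c_n|
  \le 2\,M(\varrho) \sum_{n=m+1}^\infty \varrho^{-n}
  = \frac{2}{\varrho-1}\,\varrho^{-m}\,M(\varrho),
\end{equation*}
which is exactly the required bound.

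There is no real obstacle; the argument is classical (see \cite{DELO93}). The only points that require care are checking that the Joukowski map genuinely identifies the annulus with $\mathcal{D}_{\hat\varrho}\setminus[-1,1]$ so that $F$ is holomorphic where needed, and verifying that the condition $\varrho<\hat\varrho$ ensures $\overline{\mathcal{D}}_\varrho\subset\mathcal{D}_{\hat\varrho}$ so that $M(\varrho)$ is finite and the Cauchy estimate applies.
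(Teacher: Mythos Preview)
Your argument is correct and is precisely the classical Chebyshev-series proof. The paper itself does not give a proof at all: it merely cites \cite[eqn.~(8.7), Chap.~7]{DELO93}, so your proposal in fact supplies the argument that the paper defers to the literature, and it is essentially the same proof one finds in that reference.
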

\begin{proof}
This is \cite[eqn.~{(8.7)}, Chap.~7]{DELO93}.
\end{proof}

Our Lemmas~\ref{le:norm_estimates} and \ref{le:taylor} can be used
to obtain bounds for the holomorphic extension of $g_{dp}$ in the
domains $U_r$.
In order to apply Lemma~\ref{le:existence}, we simply have to find
$\varrho>1$ such that $\overline{\mathcal{D}}_\varrho\subseteq U_r$.

%
%
\begin{lemma}[Inclusion]
\label{le:inclusion}
Let $r\in\bbbr_{>0}$, and let $\varrho := \sqrt{r^2+1}+r$.
We have $\overline{\mathcal{D}}_\varrho \subseteq U_r$.
\end{lemma}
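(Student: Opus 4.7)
The plan is direct: compute the semi-axes of the Bernstein ellipse $\mathcal{D}_\varrho$ in closed form using the specific choice $\varrho = \sqrt{r^2+1}+r$, and then show by an elementary case analysis that every point of the closed ellipse lies within Euclidean distance $r$ of some $t \in [-1,1]$.

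First I would observe that $\varrho = \sqrt{r^2+1}+r$ and $1/\varrho = \sqrt{r^2+1}-r$ (since their product is $1$), so
\begin{equation*}
  \frac{\varrho+1/\varrho}{2} = \sqrt{r^2+1}, \qquad \frac{\varrho-1/\varrho}{2} = r.
\end{equation*}
Thus $\overline{\mathcal{D}}_\varrho$ is exactly the closed ellipse with semi-major axis $\sqrt{r^2+1}$ and semi-minor axis $r$, i.e., the set of $z = x+\bi y$ with
\begin{equation*}
  \frac{x^2}{r^2+1} + \frac{y^2}{r^2} \leq 1.
\end{equation*}

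Next I would fix such a $z = x + \bi y$ and split into cases to exhibit an appropriate $t \in [-1,1]$ with $|z-t| \leq r$. If $|x| \leq 1$, take $t := x$; then $|z-t|^2 = y^2 \leq r^2(1 - x^2/(r^2+1)) \leq r^2$, and we are done. If instead $|x| > 1$, the ellipse inequality in particular forces $|x| \leq \sqrt{r^2+1}$, and I would take $t := \operatorname{sign}(x)$, so that $|z-t|^2 = (|x|-1)^2 + y^2$. Substituting the ellipse bound $y^2 \leq r^2(1 - x^2/(r^2+1))$ reduces the desired inequality $|z-t|^2 \leq r^2$ to the purely real quadratic inequality
\begin{equation*}
  x^2 - 2(r^2+1)|x| + (r^2+1) \leq 0.
\end{equation*}

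The only thing left to check, and the small obstacle in the argument, is to verify this quadratic inequality on the relevant interval $|x| \in (1, \sqrt{r^2+1}]$. This I would handle by evaluating the left-hand side at the two endpoints: at $|x| = 1$ it equals $-r^2$, and at $|x| = \sqrt{r^2+1}$ it equals $2(r^2+1)(1 - \sqrt{r^2+1})$, both non-positive. Since the quadratic is convex, it remains non-positive on the whole interval, which finishes the case $|x| > 1$ and hence the proof.
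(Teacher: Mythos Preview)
Your proof is correct. The computation of the semi-axes, the case split according to whether $|x|\leq 1$ or $|x|>1$, and the reduction to the quadratic inequality are all sound; your closing convexity argument is also valid, since for a convex function $f$ one has $f(ta+(1-t)b)\leq tf(a)+(1-t)f(b)\leq 0$ whenever $f(a),f(b)\leq 0$.

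As for comparison with the paper: the paper does not actually give a proof of this lemma, but merely cites \cite[Lemma~4.77]{BO10}. Your argument is therefore a self-contained elementary replacement for that citation. The approach you take---identifying the semi-axes explicitly and projecting onto the nearest point of $[-1,1]$---is the natural one, and is essentially what any direct proof of this fact would do.
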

\begin{proof}
This is \cite[Lemma~4.77]{BO10}.
\end{proof}

%
%
\begin{theorem}[Approximation of $g_{dp}$]
\label{th:approximation}
Let $c\in\bbbr^3$ and let axis-parallel boxes $\tau$, $\sigma$ satisfy the
admissibility conditions (\ref{eq:admissibility}).
Let $d$, $p\in\bbbr^3$ satisfy (\ref{eq:p_bound}) and (\ref{eq:dtp_bound}).
Set 
\begin{equation}\label{eq:rhohat}
  \hat\varrho := \min\left\{ 2, \frac{3}{2 \eta_2} + 1 \right\}.
\end{equation}
Then there are constants $\Cin\in\bbbr_{\geq 0}$ and $\alpha\in\bbbr_{>1}$
depending only on the admissibility parameters $\eta_1,\eta_2$ and the
stability constants $C_\Lambda,\lambda$
(cf. (\ref{eq:polynomial-growth-Lambda})) such that
\begin{align*}
  \|g_{dp}-\mathfrak{I}[g_{dp}]\|_{\infty,[-1,1]}
  &\leq \frac{\Cin}{4 \pi \dist(\tau,\sigma)} \alpha^{-m/2} \hat\varrho^{-m} &
  &\text{ for all } m\in\bbbn.
\end{align*}
\end{theorem}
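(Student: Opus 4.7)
The plan is to combine the holomorphic extension of $g_{dp}$ on a domain $U_r$ with the Bernstein disc approximation result Lemma~\ref{le:existence} and the stability Assumption~\ref{assumption:Lambda}. First I would choose $r \in (0,\zeta)$ so that $\sqrt{r^2+1}+r$ equals (or slightly exceeds) $\hat\varrho$, which by Lemma~\ref{le:inclusion} ensures $\overline{\mathcal{D}}_{\hat\varrho} \subseteq U_r \subseteq U_{dp}$, so that $g_{dp}$ extends holomorphically to $\overline{\mathcal{D}}_{\hat\varrho}$.

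The crux is to bound $|g_{dp}(z)|$ uniformly on $U_r$ by a constant depending only on $\eta_1, \eta_2$ times $1/\dist(\tau,\sigma)$. The denominator is handled immediately by Lemma~\ref{le:norm_estimates}: $|\enorm(z)| \geq \|p\|(\zeta-r)$. For the numerator $|\exp(\bi \kappa(\enorm(z) - \langle d-zp, c\rangle))| \leq \exp(\kappa|\Im(\enorm(z)-\langle d-zp,c\rangle)|)$, I would invoke Lemma~\ref{le:taylor} to bound the exponent by $\kappa\|p\|(\|\frac{d-tp}{\|d-tp\|}-c\|r + r^2/(2(\zeta-r)))$; the first term is $\leq (\eta_1+\eta_2)r/2$ by Lemma~\ref{le:approximate_directions}, independent of $\kappa$.

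The main obstacle is ensuring that the second term $\kappa\|p\|r^2/(2(\zeta-r))$ is also controlled, which forces the specific choice of $\hat\varrho$ in (\ref{eq:rhohat}). The key quantitative input is the chain of inequalities derived from parabolic admissibility: writing $q := \max\{\diam(\tau),\diam(\sigma)\}$, we have $2\|p\| \leq q$, $q \leq \eta_2\dist(\tau,\sigma)$ (standard admissibility), $\kappa q^2 \leq \eta_2\dist(\tau,\sigma)$ (parabolic), and $\zeta\|p\| \geq \dist(\tau,\sigma)$. These yield both $\zeta \geq 2/\eta_2$ and $\kappa\|p\| \leq \eta_2\zeta/4$. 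With $r$ chosen so that $\hat\varrho = \min\{2, 3/(2\eta_2)+1\}$, one verifies $r/\zeta \leq r\eta_2/2$ is bounded away from $1$ by a constant depending only on $\eta_2$, so that both $\zeta/(\zeta-r)$ and hence $\kappa\|p\|r^2/(2(\zeta-r)) \leq \eta_2\zeta r^2/(8(\zeta-r))$ are uniformly bounded. Combining the numerator and denominator bounds gives $|g_{dp}(z)| \leq C/\dist(\tau,\sigma)$ on $\overline{\mathcal{D}}_{\hat\varrho}$ with $C = C(\eta_1,\eta_2)$.

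Finally, to convert the best-approximation bound from Lemma~\ref{le:existence} into an interpolation bound, I would use the standard estimate $\|g_{dp} - \mathfrak{I}[g_{dp}]\|_{\infty,[-1,1]} \leq (1+\Lambda_m)\inf_{\pi\in\Pi_m}\|g_{dp}-\pi\|_{\infty,[-1,1]}$ together with Assumption~\ref{assumption:Lambda}. To produce the claimed form $\alpha^{-m/2}\hat\varrho^{-m}$, I would apply Lemma~\ref{le:existence} with a slightly enlarged Bernstein parameter $\tilde\varrho > \hat\varrho$ (chosen from a slightly larger $\tilde r > r$, still with $\tilde r < \zeta$), so that $\tilde\varrho^{-m} = (\tilde\varrho/\hat\varrho)^{-m}\hat\varrho^{-m}$, and absorb the polynomial factor $1+C_\Lambda(m+1)^\lambda$ together with one half of the gap factor into $\alpha^{-m/2}$ where $\alpha := \tilde\varrho/\hat\varrho > 1$ depends only on $\eta_2$ and $C_\Lambda,\lambda$.
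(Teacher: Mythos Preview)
Your sketch is correct and follows essentially the same route as the paper: bound $|g_{dp}|$ on $U_r$ via Lemmas~\ref{le:norm_estimates}, \ref{le:taylor}, and \ref{le:approximate_directions}, embed a Bernstein disc via Lemma~\ref{le:inclusion}, apply Lemma~\ref{le:existence}, and finish with the $(1+\Lambda_m)$ stability factor and Assumption~\ref{assumption:Lambda}. The paper streamlines your final step by making the single explicit choice $r:=\min\{1,\tfrac{3}{4}\zeta\}$ from the outset, which already satisfies $r\geq\hat r:=\min\{1,3/(2\eta_2)\}$ and hence $\varrho=\sqrt{r^2+1}+r\geq\sqrt{\hat r^2+1}+\hat r>\hat r+1=\hat\varrho$, so the gap factor $\alpha:=(\sqrt{\hat r^2+1}+\hat r)/(\hat r+1)$ comes for free without introducing a second radius $\tilde r$ (note that to apply Lemma~\ref{le:existence} at $\tilde\varrho$ you would need the sup-bound on $U_{\tilde r}$, not $U_r$, so in your version you should simply run all estimates at $\tilde r$ from the start).
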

\begin{proof}
Due to (\ref{eq:dtp_bound}), we have
\begin{align*}
  \|d-tp\| &\geq \dist(\tau,\sigma) &
  &\text{ for all } t\in[-1,1],
\end{align*}
and therefore
\begin{equation*}
  \zeta = \min\left\{ \frac{\|d-tp\|}{\|p\|}\ :\ t\in[-1,1] \right\}
  \geq \frac{\dist(\tau,\sigma)}{\|p\|}.
\end{equation*}
Combining (\ref{eq:p_bound}) with the standard admissibility condition
(\ref{eq:adm_standard}) and the parabolic admissibility condition
(\ref{eq:adm_parabolic}), we obtain $\zeta \geq 2 / \eta_2$ and
$\zeta \geq 4 \kappa \|p\| / \eta_2$.
Choose 
\begin{equation*}
  r := \min\left\{ 1, \frac{3}{4} \zeta \right\}
    \geq \hat r := \min\left\{ 1, \frac{3}{2 \eta_2} \right\}
\end{equation*}
and consider $z\in U_r$ with $U_r$ defined in (\ref{eq:Ur_def}). 
Lemma~\ref{le:approximate_directions} yields
\begin{align*}
  \left\|\frac{d-tp}{\|d-tp\|} - c\right\|
  &\leq \frac{\eta_1+\eta_2}{2 \kappa \|p\|} &
  &\text{ for all } t\in[-1,1],
\end{align*}
and (\ref{eq:fe_bound}) takes the form
\begin{align*}
  |\Im(\enorm(z) - \langle d-zp, c \rangle)|
  &\leq \|p\| \left( \left\|\frac{d-tp}{\|d-tp\|} - c \right\| r
                   + \frac{1}{2 (\zeta-r)} r^2 \right)\\
  &\leq \|p\| \left( \frac{\eta_1+\eta_2}{2 \kappa \|p\|}
                     + \frac{\eta_2}{8 \kappa \|p\| (1-r/\zeta)} \right)\\
  &= \frac{1}{\kappa} \left( \frac{\eta_1+\eta_2}{2}
                             + \frac{\eta_2}{2} \right)
   \leq \frac{\eta_1+\eta_2}{\kappa}.
\end{align*}
For the denominator of $g_{dp}$, we use Lemma~\ref{le:norm_estimates} to
find
\begin{equation*}
  |\enorm(z)| \geq \|p\| (\zeta-r)
  \geq \dist(\tau,\sigma) (1-r/\zeta)
  \geq \dist(\tau,\sigma) / 4
\end{equation*}
and arrive at
\begin{equation*}
  |g_{dp}(z)|
  \leq \frac{|\exp(\bi \kappa (\enorm(z) - \langle d-zp, c \rangle))|}
            {4 \pi |f(z)|}
  \leq \frac{\exp(\eta_1+\eta_2)}{\pi \dist(\tau,\sigma)}.
\end{equation*}
According to Lemma~\ref{le:inclusion}, $U_r$ contains
$\overline{\mathcal{D}}_\varrho$ for
\begin{equation*}
  \varrho = \sqrt{r^2+1}+r \geq \sqrt{\hat r^2+1}+\hat r
  > \hat r + 1 = \min\left\{ 2, \frac{3}{2 \eta_2} + 1 \right\} = \hat\varrho.
\end{equation*}
Let $\alpha := (\sqrt{\hat r^2+1}+\hat r)/(\hat r+1)$.
We have $\varrho \geq \alpha \hat\rho$, $\alpha>1$, and
(\ref{eq:polynomial-growth-Lambda}) yields that
\begin{equation*}
  \Cin := \sup\left\{ \frac{8 (\Lambda_m+1)}{(\hat\varrho-1) \alpha^{m/2}}
                      \exp(\eta_1+\eta_2)
                 \ :\ m\in\bbbn \right\}
\end{equation*}
is finite.
Let now $m\in\bbbn$.
Lemma~\ref{le:existence} gives us $\pi\in\Pi_m$ with
\begin{align*}
  \|g_{dp} - \pi\|_{\infty,[-1,1]}
  &\leq \frac{2}{\hat\varrho-1} \varrho^{-m}
        \max\{|f(z)|\ :\ z\in\overline{\mathcal{D}}_\varrho \}\\
  &\leq \frac{2}{(\hat\varrho-1) \alpha^{m/2}} \alpha^{-m/2} \hat\varrho^{-m}
        \frac{4 \exp(\eta_1+\eta_2)}{4 \pi \dist(\tau,\sigma)}.
\end{align*}
The well-known best approximation property of interpolation
schemes finally gives 
\begin{equation*}
  \|g_{dp}-\mathfrak{I}[g_{dp}]\|_{\infty,[-1,1]}
  \leq (1+\Lambda_m) \|g_{dp}-\pi\|_{\infty,[-1,1]}
  \leq \frac{\Cin}{4 \pi \dist(\tau,\sigma)} \alpha^{-m/2} \hat\varrho^{-m}.
\end{equation*}
\end{proof}

%
%
\begin{corollary}[Interpolation error for $g$]
\label{co:interpolation_error}
Let $c\in\bbbr^3$ and let the axis-parallel boxes $\tau$, $\sigma$
satisfy the admissibility conditions (\ref{eq:admissibility}).
Let $\hat\varrho$ be given by (\ref{eq:rhohat}). 
Then there is a constant $\Cmi\in\bbbr_{\geq 0}$ depending only on the
admissibility parameters $\eta_1,\eta_2$ and the stability constants
$C_\Lambda,\lambda$ (cf. (\ref{eq:polynomial-growth-Lambda})) such that
\begin{align*}
  \|g-\tilde g_{\tau\sigma}\|_{\infty,\tau\times\sigma}
  &\leq \frac{\Cmi}{4 \pi \dist(\tau,\sigma)}
        \hat\varrho^{-m}
  &\text{ for all } m\in\bbbn.
\end{align*}
\end{corollary}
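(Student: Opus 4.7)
The plan is to chain together the reductions that have been set up in the section so that only Theorem~\ref{th:approximation} and Assumption~\ref{assumption:Lambda} remain to be invoked. First I would use the identity (\ref{eq:exp_interpolation_error}) to replace $\|g-\tilde g_{\tau\sigma}\|_{\infty,\tau\times\sigma}$ by $\|g_c-\tilde g_{c,\tau\sigma}\|_{\infty,\tau\times\sigma}$, so that the estimate becomes one for the interpolation error of the directionally modified kernel $g_c$ on the tensor-product box $\tau\times\sigma$.

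Next, I would apply Lemma~\ref{le:univariate_formulation}, which packages Lemma~\ref{le:tensor_interpolation} for precisely this situation: if the one-dimensional interpolation error of every slice function $g_{dp}$ (with $d,p\in\bbbr^3$ satisfying (\ref{eq:p_bound}) and (\ref{eq:dtp_bound})) is bounded by some common $\epsilon$, then the six-dimensional tensor interpolation error is bounded by $6\Lambda_m^5\epsilon$. Theorem~\ref{th:approximation} supplies exactly such a uniform $\epsilon$, namely
\begin{equation*}
  \epsilon
  \;=\; \frac{\Cin}{4\pi\dist(\tau,\sigma)}\,\alpha^{-m/2}\hat\varrho^{-m},
\end{equation*}
with $\Cin,\alpha>1$ depending only on $\eta_1,\eta_2,C_\Lambda,\lambda$. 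The key point is that the vectors $d,p$ produced by the slice construction all satisfy the assumptions of Lemma~\ref{le:univariate_formulation}, so Theorem~\ref{th:approximation} applies to every slice with the same constants.

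The only thing to check is that the extra factor $6\Lambda_m^5$ from the tensor step can be absorbed. By Assumption~\ref{assumption:Lambda} one has $\Lambda_m\leq C_\Lambda(m+1)^\lambda$, so that $6\Lambda_m^5\leq 6C_\Lambda^5(m+1)^{5\lambda}$. Since $\alpha>1$ the quantity
\begin{equation*}
  \sup_{m\in\bbbn}\,6\,C_\Lambda^5\,(m+1)^{5\lambda}\,\alpha^{-m/2}
\end{equation*}
is finite and depends only on $\eta_1,\eta_2,C_\Lambda,\lambda$. Multiplying by this supremum and by $\Cin$ yields the desired constant $\Cmi$, and the remaining $\hat\varrho^{-m}$ factor is exactly what the corollary claims. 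There is no real obstacle beyond bookkeeping; the decisive observation is simply that the $\alpha^{-m/2}$ gain in Theorem~\ref{th:approximation} was introduced precisely to swallow the polynomial-in-$m$ growth of the Lebesgue constant that tensorization brings in.
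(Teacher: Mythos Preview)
Your proposal is correct and follows essentially the same route as the paper: apply Lemma~\ref{le:univariate_formulation} to reduce to the one-dimensional slices, plug in the bound from Theorem~\ref{th:approximation}, and absorb the factor $6\Lambda_m^5$ into the $\alpha^{-m/2}$ slack using Assumption~\ref{assumption:Lambda}. The only redundancy is that you invoke (\ref{eq:exp_interpolation_error}) separately before Lemma~\ref{le:univariate_formulation}, whereas the lemma already delivers the bound for $\|g-\tilde g_{\tau\sigma}\|_{\infty,\tau\times\sigma}$ directly.
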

\begin{proof}
Let $m\in\bbbn$.
We combine Lemma~\ref{le:univariate_formulation} with
Theorem~\ref{th:approximation} to obtain
\begin{equation*}
  \|g - \tilde g_{\tau\sigma}\|_{\infty,\tau\times\sigma}
  \leq 6 \Lambda_m^5 \frac{\Cin}{4 \pi \dist(\tau,\sigma)}
       \alpha^{-m/2} \hat\varrho^{-m}
\end{equation*}
with $\alpha>1$.
Due to the stability assumption (\ref{eq:polynomial-growth-Lambda}),
the supremum
\begin{equation*}
  \Cmi := \sup\left\{ \frac{6 \Lambda_m^5 \Cin}{\alpha^{m/2}}
                       \ :\ m\in\bbbn \right\}
\end{equation*}
is finite and we conclude
\begin{equation*}
  \|g - \tilde g_{\tau\sigma}\|_{\infty,\tau\times\sigma}
  \leq \frac{\Cmi}{4 \pi \dist(\tau,\sigma)} \hat\varrho^{-m}.
\end{equation*}
\end{proof}

%
%
\begin{remark}[Asymptotic rate]
According to \cite[Theorem~8.1, Chap.~7]{DELO93}, we can expect
the error to be bounded by $C_r \varrho^{-m}$ with $\varrho=\sqrt{r^2+1}+r$
for any $r<\zeta$. However, $C_r \rightarrow \infty$ for $r \rightarrow \zeta$ 
is possible.

In the proof of Theorem~\ref{th:approximation}, we have chosen $r$
in a way that leads to a particularly simple estimate for the
exponential term.
\end{remark}

%
%
\section{Two-dimensional Helmholtz kernel}
\label{sec:other-kernels}
The core of the analysis of the 3D case in Section~\ref{sec:g3D} 
is the detailed analysis of the holomorphic extension of the Euclidean
norm, i.e., the function $\enorm$, as it allows for good control
of the functions
$z \mapsto \exp(\bi \kappa (\enorm(z) - \langle d - z p,c\rangle))$.
This opens the door to the analysis of more general kernel functions $k$ for
which (the holomorphic extension of) the ``non-oscillatory'' part
$k(x,y) \exp(-\bi \kappa \|x - y\|)$ can be controlled. 
A particularly interesting case are translation invariant kernel
functions of the form $k(x,y) = k_1(\kappa,\|x - y\|)$, where the map
$z \mapsto k_1(\kappa,z)$ is holomorphic on $\bbbc_+$ and satisfies
suitable conditions there.
The two-dimensional Helmholtz kernel $h$ is such an example. 

%
%
\begin{lemma}
\label{lemma:estimate-hankel}
There exists a $C > 0$ such that
\begin{align*}
  \left| H^{(1)}_0(z) \exp(- \bi z)\right|
  &\leq C \min\left\{1 + \left|\ln |z| \right|, 1/\sqrt{|z|}\right\} &
  &\text{ for all } z\in\bbbc_+.
\end{align*}
\end{lemma}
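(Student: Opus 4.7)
The plan is to establish two uniform bounds separately on $\bbbc_+$, namely $|H^{(1)}_0(z)e^{-\bi z}| \leq C_1(1+|\ln|z||)$ and $|H^{(1)}_0(z)e^{-\bi z}| \leq C_2|z|^{-1/2}$. Since $C\min\{A,B\}=\min\{CA,CB\}$, the assertion then follows with $C:=\max(C_1,C_2)$. Two elementary observations organize the work: for $|z|\geq 1$ we have $|z|^{-1/2}\leq 1\leq 1+|\ln|z||$, so the decay bound trivially implies the logarithmic one; conversely, for $|z|\leq 1$ the function $r\mapsto\sqrt{r}\,(1+|\ln r|)$ is bounded on $(0,1]$, so the logarithmic bound implies the decay bound. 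Hence it suffices to prove the logarithmic bound on $|z|\leq 1$ and the decay bound on $|z|\geq 1$.

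For $|z|\leq 1$ I would use the standard decomposition $H^{(1)}_0 = J_0 + \bi Y_0$ together with the series representation $Y_0(z) = (2/\pi)(\ln(z/2)+\gamma)\,J_0(z) + A(z)$, where $J_0$ is entire and $A$ is entire in $z^2$. On the closed unit disk both $J_0$ and $A$ are uniformly bounded, while $|\ln(z/2)|\leq|\ln|z||+\ln 2$, so that $|H^{(1)}_0(z)|\leq C(1+|\ln|z||)$; combined with $|e^{-\bi z}|\leq e^{|\Im z|}\leq e$ on $|z|\leq 1$ this yields the logarithmic bound.

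For $|z|\geq 1$ I would invoke the classical uniform asymptotic expansion $H^{(1)}_0(z) = \sqrt{2/(\pi z)}\,e^{\bi(z-\pi/4)}(1+R(z))$ with $|R(z)|\leq C|z|^{-1}$, valid uniformly in every sector $|\arg z|\leq\pi-\delta$. Since $\bbbc_+\subseteq\{|\arg z|\leq\pi/2\}$ sits strictly inside the sector with $\delta=\pi/4$, the expansion is uniform on $\bbbc_+\cap\{|z|\geq 1\}$. Multiplication by $e^{-\bi z}$ cancels the oscillatory factor and leaves $|H^{(1)}_0(z)e^{-\bi z}| = \sqrt{2/(\pi|z|)}\,|1+R(z)|\leq C_2|z|^{-1/2}$, which is the required decay bound.

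The hard part is justifying the uniform asymptotic expansion up to the boundary $\arg z=\pm\pi/2$ of $\bbbc_+$. This is classical and can be found e.g.\ in the Digital Library of Mathematical Functions, but if a self-contained derivation is desired, one can start from the Mehler--Sonine representation $H^{(1)}_0(z)=(2/\pi)\int_0^\infty e^{\bi z\cosh t}\,dt$, which converges absolutely only for $\Im z>0$, and extend it to all of $\bbbc_+$ by rotating the contour into the upper half-plane of the integration variable $v=\cosh t$; the rotated contour avoids the singularities of the integrand (which lie on the negative real $v$-axis) and yields exponential decay $|e^{\bi z v}|\lesssim\exp(-c\,\Re z\,|v|)$ for a suitable rotation angle, so the resulting integral representation converges uniformly on compact subsets of $\bbbc_+$, and a Watson-type expansion about $v=1$ then delivers the claimed asymptotic with a uniform remainder.
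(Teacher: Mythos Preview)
Your proposal is correct and follows essentially the same approach as the paper: split into small and large $|z|$, use the decomposition $H^{(1)}_0=J_0+\bi Y_0$ and the logarithmic behavior of $Y_0$ near the origin (the paper cites \cite[(9.1.12), (9.1.13)]{AS72}), and use the uniform asymptotic expansion with explicit remainder for large $|z|$ (the paper cites Olver \cite[Chap.~7, (13.2), (13.3)]{OL74}, which gives directly $|H^{(1)}_0(z)e^{-\bi(z-\pi/4)}|\leq\sqrt{2/(\pi|z|)}\bigl(1+\tfrac{\pi}{8|z|}e^{\pi/(8|z|)}\bigr)$ on $\bbbc_+$). Your extra paragraph on a self-contained derivation via Mehler--Sonine is not needed once one accepts Olver's bound, but it is a legitimate alternative route.
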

\begin{proof}
The bound is obtained by studying the cases of small $|z|$ and large
$|z|$ separately.
For small $z$ we use the fact that $H^{(1)}_0(z) = J_0(z) + \bi Y_0(z)$ and
that $J_0$ is analytic with $\lim_{z \rightarrow 0} J_0(z) = 1$
and $Y_0(z) \sim 2/\pi \left( \ln (z/2) + \gamma\right)$
(as $z \rightarrow 0$, $z \in \bbbc_+$) with Euler-Mascheroni's constant
$\gamma$ (cf. \cite[(9.1.12), (9.1.13)]{AS72}) so that  for any $R > 0$
one has a $C_R > 0$ such that
\begin{align*}
  \left| H^{(1)}_0(z) \exp(- \bi \zeta)\right|
  &\leq C_R (1 + |\ln |z||) &
  &\text{ for all } z\in B_R(0) \cap \bbbc_+.
\end{align*}
For large $z$, one uses \cite[Chap.~7, eqns.~(13.2), (13.3)]{OL74} with
$n = 1$ to get
\begin{equation*}
  \left| H^{(1)}_0(z) \exp(- \bi (z - \pi/4))\right|
  \leq \sqrt{ \frac{2}{\pi |z|}}
       \left( 1+ \frac{\pi}{8 |z|} \exp\left(\frac{\pi}{8 |z|}\right)\right).
\end{equation*}
\end{proof}

Using Lemma~\ref{lemma:estimate-hankel} it is possible to formulate the
approximation result corresponding to Theorem~\ref{th:approximation}.

%
%
\begin{lemma}
\label{le:approximation-hankel}
Let $c\in\bbbr^2$ and let axis-parallel boxes $\tau$, $\sigma$
satisfy the admissibility conditions (\ref{eq:admissibility}).
Let $d,p\in\bbbr^2$ be vectors satisfying (\ref{eq:p_bound}) and
(\ref{eq:dtp_bound}).
Let $\hat\varrho$ be given by (\ref{eq:rhohat}). 
Define
\begin{align*}
  h_{dp} \colon [-1,1] &\to \bbbc, &
        t &\mapsto \frac{\bi}{4} H^{(1)}_0(\kappa \|d - t p\|)
                         \exp(-\bi \kappa \langle d-tp,c\rangle).
\end{align*}
Then there are constants $\Cin\in\bbbr_{\geq 0}$ and $\alpha\in\bbbr_{>1}$
depending only on the admissibility parameters $\eta_1,\eta_2$ and the
stability constants $C_\Lambda,\lambda$
(cf. (\ref{eq:polynomial-growth-Lambda})) such that for all $m \in \bbbn$
\begin{align*}
  \|h_{dp}-\mathfrak{I}[h_{dp}]\|_{\infty,[-1,1]}
  &\leq \Cin \min\{ 1 + |\ln (\kappa \dist(\tau,\sigma))|,
                    \,(\kappa \dist(\tau,\sigma))^{-1/2}\}
        \alpha^{-m/2} \hat\varrho^{-m}.
\end{align*}
\end{lemma}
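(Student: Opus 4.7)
The plan is to imitate the proof of Theorem~\ref{th:approximation}, taking advantage of the factorization
\[
h_{dp}(t) = \frac{\bi}{4}\bigl[H^{(1)}_0(\kappa\|d-tp\|)\exp(-\bi \kappa \|d-tp\|)\bigr]\,\exp\bigl(\bi \kappa (\|d-tp\|-\langle d-tp,c\rangle)\bigr),
\]
which isolates a ``non-oscillatory'' Hankel factor, controlled by Lemma~\ref{lemma:estimate-hankel}, from the directional exponential factor that already appeared in the three-dimensional analysis. First I would extend $h_{dp}$ holomorphically to $U_r$ by replacing $\|d-tp\|$ by $\enorm(z)$. Since Lemma~\ref{le:holomorphic_extension} shows that $\enorm$ maps $U_{dp}$ to $\bbbc_+$ and since $z\mapsto H^{(1)}_0(z)$ is holomorphic on $\bbbc_+$, this extension is well-defined and holomorphic.

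Next, I would reuse the choice $r:=\min\{1,\tfrac{3}{4}\zeta\}\geq \hat r:=\min\{1,\tfrac{3}{2\eta_2}\}$ from the proof of Theorem~\ref{th:approximation}. Under the parabolic admissibility, Lemmas~\ref{le:norm_estimates}, \ref{le:approximate_directions}, and \ref{le:taylor} give, with no change,
\[
|\enorm(z)|\geq \dist(\tau,\sigma)/4,\qquad
\bigl|\exp\bigl(\bi \kappa(\enorm(z)-\langle d-zp,c\rangle)\bigr)\bigr|\leq \exp(\eta_1+\eta_2)
\qquad\text{for all }z\in U_r.
\]
For the Hankel factor, Lemma~\ref{lemma:estimate-hankel} applied at $\kappa \enorm(z)\in\bbbc_+$ yields
\[
\bigl|H^{(1)}_0(\kappa\enorm(z))\exp(-\bi \kappa \enorm(z))\bigr|
\leq C\min\bigl\{1+|\ln(\kappa|\enorm(z)|)|,\;(\kappa|\enorm(z)|)^{-1/2}\bigr\}.
\]

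The main technical obstacle is converting this bound into one uniform in $z\in U_r$ and expressed purely in terms of $\kappa\dist(\tau,\sigma)$. The inverse-square-root branch is immediate from the lower bound $|\enorm(z)|\geq\dist(\tau,\sigma)/4$. For the logarithmic branch I would also derive the complementary upper bound $|\enorm(z)|\leq C'\dist(\tau,\sigma)$ by estimating $|\enorm(z)|\leq \|d-tp\|+\|p\|r$ via (\ref{eq:product_pwz}) and using (\ref{eq:p_bound}), (\ref{eq:dtp_bound}) together with the standard and parabolic admissibility conditions to see that both $\|d-tp\|$ and $\|p\|$ are controlled by $\dist(\tau,\sigma)$; combining the two-sided bound on $|\enorm(z)|$ then shows $1+|\ln(\kappa|\enorm(z)|)| \leq C''(1+|\ln(\kappa\dist(\tau,\sigma))|)$ uniformly on $U_r$.

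Finally, Lemma~\ref{le:inclusion} produces $\overline{\mathcal{D}}_\varrho\subseteq U_r$ with $\varrho\geq \alpha\hat\varrho$ for some $\alpha>1$ depending only on $\eta_2$, so that Lemma~\ref{le:existence} yields a polynomial $\pi\in\Pi_m$ with
\[
\|h_{dp}-\pi\|_{\infty,[-1,1]}\leq \frac{2}{\hat\varrho-1}\,\varrho^{-m}\max_{z\in\overline{\mathcal{D}}_\varrho}|h_{dp}(z)|.
\]
The best-approximation bound $\|h_{dp}-\mathfrak{I}[h_{dp}]\|_{\infty,[-1,1]}\leq (1+\Lambda_m)\|h_{dp}-\pi\|_{\infty,[-1,1]}$ combined with Assumption~\ref{assumption:Lambda} (which lets us absorb the polynomial growth of $\Lambda_m$ into a factor $\alpha^{-m/2}$, leaving $\hat\varrho^{-m}$ as the clean rate) then delivers the claimed estimate with an appropriately enlarged constant $\Cin$.
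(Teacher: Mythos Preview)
Your proposal is correct and follows essentially the same approach as the paper: factor $h_{dp}(z)=\tfrac{\bi}{4}A(z)B(z)$ with $A(z)=H^{(1)}_0(\kappa\enorm(z))\exp(-\bi\kappa\enorm(z))$ and $B(z)=\exp(\bi\kappa(\enorm(z)-\langle d-zp,c\rangle))$, reuse the bounds $|\enorm(z)|\geq\dist(\tau,\sigma)/4$ and $|B(z)|\leq\exp(\eta_1+\eta_2)$ from the proof of Theorem~\ref{th:approximation}, control $A(z)$ via Lemma~\ref{lemma:estimate-hankel}, and conclude as before. The paper's proof simply asserts $|A(z)|\leq C\min\{1+|\ln(\kappa\dist(\tau,\sigma))|,(\kappa\dist(\tau,\sigma))^{-1/2}\}$ without spelling out the upper bound on $|\enorm(z)|$ needed for the logarithmic branch, so your explicit treatment of that point (which only requires standard admissibility, not the parabolic condition) in fact fills in a detail the paper leaves implicit.
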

\begin{proof}
The key is to recall that $z \mapsto \enorm(z)$ is the holomorphic extension of 
$t \mapsto \|d - tp\|$ so that the analog of the univariate function $g_{dp}$ reads 
\begin{align*}
  h_{dp}(z)
  &= \frac{\bi}{4} \underbrace{ H^{(1)}_0(\kappa \enorm(z))
                                \exp(-\bi\kappa \enorm(z)) }_{=:A(z)}
     \underbrace{ \exp(\bi\kappa \left( \enorm(z)
                     - \langle d - zp,c\rangle\right)}_{=:B(z)}.
\end{align*}
Following the proof of Theorem~\ref{th:approximation} we have to control $h_{dp}$ on $U_r$ (with $r$ given
in the proof of Theorem~\ref{th:approximation}). By the proof of Theorem~\ref{th:approximation}
we have for $z \in U_r$ that $\enorm(z) \in \bbbc_+$ and that
$|\enorm(z)| \ge \dist(\tau,\sigma)/4$. We conclude with Lemma~\ref{lemma:estimate-hankel}
that
$$
|A(z)| \leq C \min\{1 + |\ln (\kappa \operatorname*{dist}(\sigma,\tau))|, (\kappa \operatorname*{dist}(\sigma,\tau))^{-1/2}\}. 
$$
The term $B(z)$ is estimated in the proof of Theorem~\ref{th:approximation} by
$|B(z)| \leq \exp(\eta_1 + \eta_2)$. The result follows as in Theorem~\ref{th:approximation}.
\end{proof}

Reasoning as in the proof of Corollary~\ref{co:interpolation_error} we arrive
at the following result.

%
%
\begin{corollary}[Interpolation error for $h$]
\label{co:interpolation_error_2D}
Let $c\in\bbbr^2$ with $\|c\|=1$ and let axis-parallel boxes $\tau$, $\sigma$
satisfy the admissibility conditions (\ref{eq:admissibility}).
Let $\hat\varrho$ be given by (\ref{eq:rhohat}). 
Then there is a constant $\Cmi\in\bbbr_{\geq 0}$ depending only on the
admissibility parameters $\eta_1,\eta_2$ and the stability constants
$C_\Lambda,\lambda$ (cf. (\ref{eq:polynomial-growth-Lambda})) such that for 
all $m \in \bbbn$ 
\begin{align*}
  \|h-\tilde h_{\tau\sigma}\|_{\infty,\tau\times\sigma}
  &\leq \Cmi \min\{ 1 + |\ln (\kappa \dist(\tau,\sigma))|,
                   \,(\kappa \dist(\tau,\sigma))^{-1/2}\} \hat\varrho^{-m}. 
\end{align*}
\end{corollary}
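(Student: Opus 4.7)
The plan is to follow exactly the pattern of the proof of Corollary~\ref{co:interpolation_error}, replacing its 3D ingredients (Theorem~\ref{th:approximation} and Lemma~\ref{le:univariate_formulation}) by their 2D counterparts: Lemma~\ref{le:approximation-hankel} together with a 2D analogue of Lemma~\ref{le:univariate_formulation}.

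The first step is to pass from the multivariate interpolation error on $\tau\times\sigma$ to univariate errors for functions of the form $h_{dp}$. As in (\ref{eq:exp_interpolation_error}), multiplication by the plane wave $\exp(\bi\kappa\langle\cdot-\cdot,c\rangle)$ leaves the sup-norm invariant, so
\begin{equation*}
  \|h - \tilde h_{\tau\sigma}\|_{\infty,\tau\times\sigma}
  = \|h_c - \tilde h_{c,\tau\sigma}\|_{\infty,\tau\times\sigma},
\end{equation*}
where $h_c(x,y) := h(x,y)\exp(-\bi\kappa\langle x-y,c\rangle)$. Applying Lemma~\ref{le:tensor_interpolation} to $h_c\in C(\tau\times\sigma)$ viewed as a function on the four-dimensional box $\tau\times\sigma\subset\bbbr^4$ produces the bound
\begin{equation*}
  \sum_{\iota=1}^{4}\Lambda_m^{\iota-1}\,\max\{\|f_{(x,y),\iota}-\mathfrak{I}[f_{(x,y),\iota}]\|_{\infty,[-1,1]}\}
  \leq 4\Lambda_m^3\,\max_{d,p}\|h_{dp}-\mathfrak{I}[h_{dp}]\|_{\infty,[-1,1]},
\end{equation*}
where the maximum ranges over all admissible pairs $(d,p)$. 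Since $h$ depends only on $x-y$, every univariate section $f_{(x,y),\iota}$ coincides with $h_{dp}$ for some $d,p\in\bbbr^2$ satisfying (\ref{eq:p_bound}) and (\ref{eq:dtp_bound}); the verification is a verbatim transcription of the argument in Lemma~\ref{le:univariate_formulation}, with $\bbbr^2$ and the number $4$ in place of $\bbbr^3$ and $6$.

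Now Lemma~\ref{le:approximation-hankel} bounds each univariate error by $\Cin\min\{1+|\ln(\kappa\dist(\tau,\sigma))|,(\kappa\dist(\tau,\sigma))^{-1/2}\}\,\alpha^{-m/2}\hat\varrho^{-m}$ with some $\alpha>1$ depending only on $\eta_1,\eta_2,C_\Lambda,\lambda$. Setting
\begin{equation*}
  \Cmi := \sup_{m\in\bbbn} \frac{4\Lambda_m^3\,\Cin}{\alpha^{m/2}},
\end{equation*}
which is finite by Assumption~\ref{assumption:Lambda} together with $\alpha>1$, delivers the asserted estimate. There is no real obstacle: the argument is a mechanical adaptation of the 3D proof, the only bookkeeping change being that tensor interpolation in $\bbbr^4$ rather than $\bbbr^6$ yields the smaller Lebesgue-constant exponent $\Lambda_m^3$ instead of $\Lambda_m^5$.
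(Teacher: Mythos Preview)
Your proposal is correct and follows exactly the approach the paper indicates: the paper's own proof is the single sentence ``Reasoning as in the proof of Corollary~\ref{co:interpolation_error} we arrive at the following result,'' and you have spelled out precisely that reasoning, correctly replacing the factor $6\Lambda_m^5$ from the six-dimensional tensor interpolation by $4\Lambda_m^3$ for the four-dimensional box $\tau\times\sigma\subset\bbbr^4$.
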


%
%
\section{Nested approximation}
\label{sec:nested_approximation}

As mentioned before, the crux of polylogarithmic-linear complexity
algorithms is a nested multilevel structure.
The vital ingredient that permits this structure is the approximation
step (\ref{eq:nested_interpolation}).
In this section, we analyze the impact of this step.
Structurally similar analyses can be found in \cite{BOLOME02,SA00,BOBOME17}.

\subsection{Reduction to univariate nested interpolation}

We recall the setting of Section~\ref{sec:error-analysis-via-multilevel}:
We are given sequences of axis-parallel boxes 
\begin{align}\label{eq:cluster_sequences}
  \tau_0 &\supseteq \tau_1 \supseteq \cdots \supseteq \tau_L, &
  \sigma_0 &\supseteq \sigma_1 \supseteq \cdots \supseteq \sigma_L
\end{align}
and a sequence $c_0,\ldots,c_L\in\bbbr^n$ of directions.
We are interested in the directional interpolation operators
$\mathfrak{I}_{\tau_\ell,c_\ell}$ given by
\begin{align}
\label{eq:I_ell_def}
  \mathfrak{I}_{\tau_\ell,c_\ell}[u]
  &= \exp(\bi \kappa \langle c_\ell, \punkt \rangle)
     \mathfrak{I}_{\tau_\ell}
     [\exp(-\bi \kappa \langle c_\ell, \punkt \rangle)u] &
  &\text{ for all } u\in C(\tau_\ell),
\end{align}
and similar operators $\mathfrak{I}_{\sigma_\ell,-c_\ell}$ for the source
clusters $\sigma_\ell$.
With the aid of these operators, we write the operators
$\mathfrak{I}_{\tau_\ell \times \sigma_\ell,c_\ell}$
of (\ref{eq:directional-interpolation-operators}) as tensor product
operators
\begin{align*}
  \mathfrak{I}_{\tau_\ell \times  \sigma_\ell,c_\ell}
  &= \mathfrak{I}_{\tau_\ell,c_\ell} \otimes
     \mathfrak{I}_{\sigma_\ell,-c_\ell} &
  &\text{ for all } \ell\in[0:L]
\end{align*}
and approximate the kernel function $k$ by
\begin{equation*}
  \tilde k_{\tau\sigma}
  = \mathfrak{I}_{\tau_L\times\sigma_L,c_L}\circ\cdots
    \circ\mathfrak{I}_{\tau_0\times\sigma_0,c_0}[k].
\end{equation*}
In order to estimate the approximation error, we can rely
on (\ref{eq:nested-interpolation-error-vorne}) to find that we
only need a stability estimate for the iterated operators, since
we already have Corollaries~\ref{co:interpolation_error} and
\ref{co:interpolation_error_2D} at our disposal.
The iterated operators can be rewritten as 
\begin{align*}
  \mathfrak{I}_{\tau_L\times\sigma_L,c_L} \circ
    \cdots \circ \mathfrak{I}_{\tau_{\ell+1}\times\sigma_{\ell+1},c_{\ell+1}} 
  &\!=\! \Bigl( \mathfrak{I}_{\tau_L,c_L} \circ \cdots
             \circ \mathfrak{I}_{\tau_{\ell+1},c_{\ell+1}}\Bigr)\!
\otimes \! \Bigl( \mathfrak{I}_{\sigma_L,-c_L} \circ \cdots
             \circ \mathfrak{I}_{\sigma_{\ell+1},-c_{\ell+1}}\Bigr).
\end{align*} 
Since 
\begin{align}
\label{eq:nested-interpolation-error-use-of-norm} 
& \| \mathfrak{I}_{\tau_L\times\sigma_L,c_L} \circ \cdots
     \circ  \mathfrak{I}_{\tau_{\ell+1}\times\sigma_{\ell+1},c_{\ell+1}}
  \|_{\infty,\tau_L\times\sigma_L\leftarrow
    \tau_{\ell+1}\times\sigma_{\ell+1}}\\
  &\qquad\leq
     \| \mathfrak{I}_{\tau_L,c_L} \circ \cdots
         \circ \mathfrak{I}_{\tau_{\ell+1},c_{\ell+1}} 
     \|_{\infty,\tau_L\leftarrow\tau_{\ell+1}}
     \| \mathfrak{I}_{\sigma_L,-c_L} \circ \cdots
         \circ \mathfrak{I}_{\sigma_{\ell+1},-c_{\ell+1}} 
     \|_{\infty,\sigma_L\leftarrow\sigma_{\ell+1}}\notag, 
\end{align}
we have reduced the quest for the stability estimates required by
(\ref{eq:nested-interpolation-error-vorne}) to a stability analysis
of the operators 
$\mathfrak{I}_{\tau_L,c_L} \circ \cdots \circ
 \mathfrak{I}_{\tau_{\ell+1},c_{\ell+1}}$
and 
$\mathfrak{I}_{\sigma_L,-c_L} \circ \cdots \circ
 \mathfrak{I}_{\sigma_{\ell+1},-c_{\ell+1}}$. 
Their stability properties depend on how quickly the boxes shrink
and how small the differences $\|c_{\ell+1} - c_\ell\|$ are.
Our final result is recorded in Theorem~\ref{thm:nested_directional}. 

%
%
\begin{remark}
\label{rem:mathfrakI-vs-widetildemathfrakI}
We have $\mathfrak{I}_{\tau_\ell,0}=\mathfrak{I}_{\tau_\ell}$ and
$\mathfrak{I}_{\sigma_\ell,0}=\mathfrak{I}_{\sigma_\ell}$.

If $u\in C(\sigma_\ell)$ is real-valued, we have
$\mathfrak{I}_{\sigma_\ell,-c_\ell}[u]=\overline{\mathfrak{I}_{\sigma_\ell,c_\ell}[u]}$.
\end{remark}

%
%
\begin{remark}[Re-interpolated Lagrange polynomials]
In view of Section~\ref{sec:error-analysis-via-multilevel},
we can reduce the error analysis to estimating
$L_{\tau c,\nu} - \widetilde{L}_{\tau c,\nu}$.
We expect this approach to be slightly sharper than
resorting to the rather general bounds 
(\ref{eq:nested-interpolation-error-vorne}) and 
(\ref{eq:nested-interpolation-error-use-of-norm}). 
\end{remark}

Since the operators $\mathfrak{I}_{\tau_\ell,c_\ell}$ have product structure,
their analysis can be broken down further to that of understanding
operators acting on univariate functions.
Specifically, writing the axis-parallel boxes $\tau_\ell$ in the form 
\begin{equation}
\label{eq:boxes}
  \tau_\ell =
  [a_{\ell,1},b_{\ell,1}]\times\cdots\times[a_{\ell,n},b_{\ell,n}]
\end{equation}
and observing $\exp(\bi \kappa \langle c, x \rangle)
= \exp(\bi \kappa c_1 x_1) \cdots \exp(\bi \kappa c_n x_n)$,
we have
\begin{align*}
  \mathfrak{I}_{\tau_\ell,c_\ell}[u]
  &= \exp(\bi \kappa \langle c_\ell, \punkt \rangle)
     \mathfrak{I}_{[a_{\ell,1},b_{\ell,1}]}\otimes\cdots
       \otimes\mathfrak{I}_{[a_{\ell,n},b_{\ell,n}]}
     [\exp(-\bi \kappa \langle c_\ell, \punkt \rangle) u]\\
  &= \mathfrak{I}_{\ell,1}\otimes\cdots\otimes\mathfrak{I}_{\ell,n}[u]
    \qquad\text{ for all } u\in C(\tau_\ell)
\end{align*}
with the one-dimensional interpolation operators
\begin{align}\label{eq:I_ell_iota_def}
  \mathfrak{I}_{\ell,\iota} \colon
    C[a_{\ell,\iota},b_{\ell,\iota}] &\to C[a_{\ell,\iota},b_{\ell,\iota}], &
  u &\mapsto \exp(\bi \kappa c_{\ell,\iota} \punkt)
       \mathfrak{I}_{[a_{\ell,\iota},b_{\ell,\iota}]}
       [\exp(-\bi \kappa c_{\ell,\iota} \punkt) u]
\end{align}
for all $\ell\in[0:L]$ and $\iota\in[1:n]$.
We note 
\begin{equation}\label{eq:reinterpolation_reduced}
  \mathfrak{I}_{\tau_L,c_L} \circ \cdots
     \circ \mathfrak{I}_{\tau_\ell+1,c_{\ell+1}}
   = (\mathfrak{I}_{L,1} \circ \cdots \mathfrak{I}_{\ell+1,1}) \otimes
     \cdots
     \otimes (\mathfrak{I}_{L,n} \circ \cdots \mathfrak{I}_{\ell+1,n}),  
\end{equation}
so that the stability analysis of 
$\mathfrak{I}_{\tau_L,c_L} \circ \cdots \circ
\mathfrak{I}_{\tau_{\ell+1},c_{\ell+1}}$
is indeed reduced to that  of the operators  
$(\mathfrak{I}_{L,\iota} \circ \cdots \mathfrak{I}_{\ell+1,\iota})$
for $\iota \in [1:n]$. 
\subsection{Recursive reinterpolation in 1D}

Let $\mathfrak{C} := (J_\ell)_{\ell=0}^L$ be a tuple of non-empty intervals
\begin{equation}
\label{eq:nested-sequence-1D}
  J_0 \supseteq J_1 \supseteq J_2 \supseteq \cdots \supseteq J_L.
\end{equation}
We assume that there is a \emph{contraction factor} $\bar q\in\bbbr$
such that
\begin{align}\label{eq:contraction_factor}
  \frac{|J_\ell|}{|J_{\ell-1}|} &\leq \bar q < 1 &
  &\text{ for all } \ell\in[1:L].
\end{align}
We fix $c_0,\ldots,c_L\in\bbbr$ and denote the weighted
interpolation operators by
\begin{align}
\label{eq:frak_I_ell}
  \mathfrak{I}_\ell \colon C(J_\ell) &\to C(J_\ell), &
    u &\mapsto \exp(\bi \kappa c_\ell \punkt)
          \mathfrak{I}_{J_\ell}[\exp(-\bi \kappa c_\ell \punkt) u] &
  &\text{ for all } \ell\in[0:L].
\end{align}
The iterated interpolation operator is given by
\begin{equation*}
  \mathfrak{I}_{\mathfrak{C}}
  := \mathfrak{I}_L \circ \cdots \circ \mathfrak{I}_0.
\end{equation*}
The stability analysis of $\mathfrak{I}_{\mathfrak{C}}$ 
uses the Bernstein estimate to bound a polynomial
in a Bernstein disc $\mathcal{D}_\alpha$ and then applies
Lemma~\ref{le:existence} to find an approximation in a sub-interval.
For the latter approximation step, we need the following geometrical result.

%
%
\begin{lemma}[Inclusion]
\label{lemma:lazy-ellipse-inclusions}
Let $ -1 \leq a < b \leq 1$ and $h:= (b-a)/2$.
For $\alpha > 1$ denote by
\begin{equation*}
{\mathcal D}_{\alpha}^{a,b}:= \Phi_{[a,b]}(\mathcal{D}_\alpha)
\end{equation*}
the transformed Bernstein disc $\mathcal{D}_\alpha$
(cf. (\ref{eq:bernstein_disc})) for the interval $[a,b]$.
Fix $ \varepsilon \in (0,1)$. Then there is a $\varrho_0 > 1$ (depending
solely on $\varepsilon$) such that 
\begin{align*}
  {\mathcal D}^{a,b}_{(1-\varepsilon) \varrho/h}
  &\subset {\mathcal D}_\varrho &
  &\text{ for all } \varrho\geq\varrho_0.
\end{align*}
\end{lemma}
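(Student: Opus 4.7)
My plan is to reduce the claim to an elementary geometric statement about containment of ellipses by comparing each transformed Bernstein disc with the inscribed and circumscribed discs of $\mathcal{D}_\varrho$.

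First, I would unfold the definitions. By (\ref{eq:bernstein_disc}) together with the affine map $\Phi_{[a,b]}(\zeta) = (b+a)/2 + h\zeta$, the set $\mathcal{D}^{a,b}_\alpha$ is the ellipse centered at $m := (b+a)/2$ with semi-axes $h(\alpha + 1/\alpha)/2$ (real direction) and $h(\alpha - 1/\alpha)/2$ (imaginary direction). Hence $\mathcal{D}^{a,b}_\alpha$ is contained in the closed disc $\overline{B}(m, h(\alpha+1/\alpha)/2)$ centered at $m$. On the other hand, $\mathcal{D}_\varrho$ contains the open disc $B(0, (\varrho - 1/\varrho)/2)$ centered at the origin (the inscribed disc of a Bernstein ellipse having radius equal to its semi-minor axis).

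Second, I would substitute $\alpha = (1-\varepsilon)\varrho/h$ into the semi-major axis formula and use $h \leq 1$ together with $|m| \leq 1$ (which follows from $[a,b] \subseteq [-1,1]$) to obtain, for any $w \in \mathcal{D}^{a,b}_{(1-\varepsilon)\varrho/h}$,
\begin{equation*}
|w| \;\leq\; |m| + h\,\frac{\alpha + 1/\alpha}{2}
     \;\leq\; 1 + \frac{(1-\varepsilon)\varrho}{2} + \frac{h^{2}}{2(1-\varepsilon)\varrho}
     \;\leq\; 1 + \frac{(1-\varepsilon)\varrho}{2} + \frac{1}{2(1-\varepsilon)\varrho}.
\end{equation*}
Thus it suffices to find $\varrho_0 > 1$ depending only on $\varepsilon$ such that, for $\varrho \geq \varrho_0$,
\begin{equation*}
1 + \frac{(1-\varepsilon)\varrho}{2} + \frac{1}{2(1-\varepsilon)\varrho} \;\leq\; \frac{\varrho - 1/\varrho}{2}.
\end{equation*}
Rearranging, this inequality reads $1 + \frac{1}{2(1-\varepsilon)\varrho} + \frac{1}{2\varrho} \leq \frac{\varepsilon \varrho}{2}$, whose right-hand side grows like $\varepsilon\varrho/2 \to \infty$ while the left-hand side tends to $1$. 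Hence such a $\varrho_0$ exists and depends only on $\varepsilon$.

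I do not anticipate any real obstacle: the argument is just ellipse-in-disc geometry. The only thing to be careful with is that the bound on $|m|$ and on $h$ uses only $[a,b] \subseteq [-1,1]$, so the threshold $\varrho_0$ is genuinely independent of $a$ and $b$. An optional refinement—using the ellipse inequality directly rather than the inscribed/circumscribed discs—would yield a slightly smaller $\varrho_0$, but this is not needed since the statement only asserts existence.
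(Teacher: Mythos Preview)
Your proposal is correct and follows essentially the same route as the paper: both arguments sandwich the Bernstein ellipses between inscribed and circumscribed discs, reduce the inclusion to the sufficient condition $|m| + h(\alpha+1/\alpha)/2 \leq (\varrho-1/\varrho)/2$ with $\alpha=(1-\varepsilon)\varrho/h$, and then observe that this inequality holds for all $\varrho$ beyond a threshold depending only on $\varepsilon$. The only cosmetic difference is that the paper rearranges the inequality into the quadratic form $\varepsilon\varrho^2 \geq 2\varrho + 1 + h^2/(1-\varepsilon)$ and writes down an explicit $\varrho_0$, whereas you argue by comparing growth rates; both are fine.
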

\begin{proof}
We exploit that for large $\varrho$ the Bernstein disc 
${\mathcal D}_\varrho$ is essentially a (classical) disc of radius $\varrho/2$. 
We start from the following inclusion of discs in Bernstein elliptic discs 
and \emph{vice versa}: 
\begin{equation*}
B_{(\varrho - 1/\varrho)/2}(0) \subset {\mathcal D}_{\varrho} \subset B_{(\varrho +1/\varrho)/2}(0),
\end{equation*}
where $B_r(x)=\{|z-x|\leq r\, \colon\,  z\in\bbbc\}$ denotes the closed disc
around $x$ of radius $r$.
Hence, we have to show (for $\varrho$ sufficiently large) that for $\alpha = (1-\varepsilon) \varrho/h$ we have 
\begin{align*}
{\mathcal D}_{\alpha}^{a,b}  &= \frac{a+b}{2} + h {\mathcal D}_\alpha 
\subset \frac{a+b}{2} + h B_{(\alpha +1/\alpha)/2}(0)\\
& = B_{h (\alpha+1/\alpha)/2}\left(\frac{a+b}{2}\right)
\stackrel{!}{\subset} B_{(\varrho -1/\varrho)/2}(0);  
\end{align*}
all inclusions are geometrically clear with the exception of the last one. 
To ensure that one, we require
\begin{equation}
\label{eq:lemma:lazy-ellipse-inclusions-100}
1 + h \frac{\alpha + 1/\alpha}{2} \leq \frac{\varrho - 1/\varrho}{2}. 
\end{equation}
Inserting the condition $\alpha = (1 - \varepsilon) \varrho/h$ and rearranging terms, we 
see that (\ref{eq:lemma:lazy-ellipse-inclusions-100}) is true if we ensure 
\begin{equation}
\label{eq:lemma:lazy-ellipse-inclusions-200}
\varepsilon \varrho^2 \ge 2 \varrho + 1 + \frac{h^2}{1-\varepsilon}. 
\end{equation}
In view of $h \in [0,2]$, this last condition is certainly met if 
\begin{equation*}
  \varrho \ge \varrho_0
  := \frac{1 + \sqrt{1 + \varepsilon (1 + 4/(1-\varepsilon))}}{\varepsilon}.  
\end{equation*}
\end{proof}

%
%
\begin{lemma}
\label{lemma:approximation-with-exponential-factor}
Fix $q \in (\bar{q},1)$ and $\gamma > 0$. 
Then there is $m_0 > 0$ depending only on $\bar q$, $\gamma$, and $q$ 
such that the following is true: 

Let $J_1 \subset J_0$ be two closed intervals with
$|J_1|/|J_0| \leq \bar q  < 1$. 
Denote $h_0:= |J_0|/2$, $h_1:= |J_1|/2$.  
Let $\kappa$, $c_0$, $c_1 \in {\mathbb R}$ and assume that 
\begin{equation*}
  |\kappa h_0 (c_0 - c_1)| \leq \gamma. 
\end{equation*}
Then for all $m \ge m_0$ and all $\pi \in \Pi_m$ 
\begin{equation*}
  \inf_{v \in \Pi_m} \|\exp(\bi \kappa  c_0 \punkt) \pi
                    - \exp(\bi \kappa c_1 \punkt) v\|_{\infty,J_1} 
  \leq q^m \|\pi\|_{\infty,J_0}.
\end{equation*}
\end{lemma}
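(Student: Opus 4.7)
The plan is to decouple the plane-wave modulation from the polynomial: write
\begin{equation*}
\exp(\bi\kappa c_0 \punkt)\pi = \exp(\bi\kappa c_1\punkt)\bigl(\exp(\bi\kappa(c_0-c_1)\punkt)\pi\bigr) =: \exp(\bi\kappa c_1\punkt) f,
\end{equation*}
so that it suffices to find $v \in \Pi_m$ approximating the entire function $f$ on $J_1$ with $\|f-v\|_{\infty,J_1} \leq q^m \|\pi\|_{\infty,J_0}$. Because $f$ is entire, Lemma~\ref{le:existence} applied on the interval $J_1$ reduces this to a bound on $f$ on a transformed Bernstein disc $\overline{\mathcal{D}}_{\tilde\varrho}^{\,J_1}$. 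Without loss of generality, I would rescale so that $J_0 = [-1,1]$ (which turns the hypothesis $|\kappa h_0 (c_0-c_1)| \leq \gamma$ into $|\kappa(c_0-c_1)|\leq\gamma$) and write $J_1 = [a,b]$ with $h := (b-a)/2 \leq \bar q$.

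The key quantitative step is to split $|f(z)| \leq |\pi(z)|\,|\exp(\bi\kappa(c_0-c_1)z)|$ on $\overline{\mathcal{D}}_{\tilde\varrho}^{\,J_1}$ and control each factor. For $\pi$, I use Bernstein's inequality on $J_0 = [-1,1]$: for any $z \in \mathcal{D}_\varrho$ one has $|\pi(z)|\leq \varrho^m \|\pi\|_{\infty,J_0}$. By Lemma~\ref{lemma:lazy-ellipse-inclusions}, fixing $\varepsilon \in (0,1)$ and taking $\tilde\varrho \geq \varrho_0(\varepsilon)$ guarantees the inclusion $\mathcal{D}_{\tilde\varrho}^{\,J_1} \subset \mathcal{D}_{h\tilde\varrho/(1-\varepsilon)}$, giving the bound $|\pi(z)|\leq (h\tilde\varrho/(1-\varepsilon))^m \|\pi\|_{\infty,J_0}$. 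For the exponential, since $|\Im z|\leq h(\tilde\varrho-1/\tilde\varrho)/2 \leq h\tilde\varrho/2$ on the transformed disc,
\begin{equation*}
|\exp(\bi\kappa(c_0-c_1)z)| \leq \exp(|\kappa(c_0-c_1)|\,|\Im z|) \leq \exp(\gamma\bar q\tilde\varrho/2),
\end{equation*}
a constant depending only on $\gamma$, $\bar q$, and $\tilde\varrho$.

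Inserting these into Lemma~\ref{le:existence} yields
\begin{equation*}
\|f-v\|_{\infty,J_1} \leq \frac{2\exp(\gamma\bar q\tilde\varrho/2)}{\tilde\varrho-1}\,\Bigl(\tfrac{h}{1-\varepsilon}\Bigr)^m \|\pi\|_{\infty,J_0}.
\end{equation*}
Using $h \leq \bar q$, the geometric factor is at most $(\bar q/(1-\varepsilon))^m$. Since $q > \bar q$, I fix $\varepsilon > 0$ small enough that $\bar q/(1-\varepsilon) < q$, then fix $\tilde\varrho := \varrho_0(\varepsilon)$; this makes the prefactor a fixed constant $C=C(\bar q,\gamma,q)$. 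Choosing $m_0$ so that $C\,(\bar q/(1-\varepsilon))^{m} \leq q^m$ for all $m \geq m_0$ delivers the desired bound. Multiplying by the unimodular factor $\exp(\bi\kappa c_1 \punkt)$ does not change the norm, concluding the proof.

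The only real subtlety is the bookkeeping of parameters: one must pick $\varepsilon$ first (driven by the gap $q - \bar q$), then $\tilde\varrho$ (forced by $\varepsilon$ via Lemma~\ref{lemma:lazy-ellipse-inclusions}), and only then $m_0$ (to absorb the resulting constant). The exponential blow-up of the modulation factor is harmless precisely because $\tilde\varrho$ can be chosen as a fixed finite number independent of $m$.
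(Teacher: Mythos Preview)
Your argument follows essentially the same route as the paper's proof: rescale $J_0$ to $[-1,1]$, rewrite the target as a polynomial times a slowly varying exponential, apply Lemma~\ref{le:existence} on $J_1$, and control the resulting supremum via Lemma~\ref{lemma:lazy-ellipse-inclusions} together with Bernstein's inequality. The one substantive difference is that the paper lets the Bernstein parameter $\alpha$ grow linearly with $m$ (so that the prefactor $2/(\alpha-1)$ disappears automatically), whereas you keep $\tilde\varrho$ fixed and absorb the constant prefactor into $m_0$; your variant is arguably a bit simpler.

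There is, however, a small gap in your invocation of Lemma~\ref{lemma:lazy-ellipse-inclusions}. That lemma guarantees $\mathcal{D}^{a,b}_{(1-\varepsilon)\varrho/h}\subset\mathcal{D}_\varrho$ under the condition $\varrho\geq\varrho_0(\varepsilon)$, i.e., the threshold applies to the \emph{outer} ellipse parameter. In your notation $\varrho=h\tilde\varrho/(1-\varepsilon)$, so the required condition is $h\tilde\varrho/(1-\varepsilon)\geq\varrho_0$, not $\tilde\varrho\geq\varrho_0$. Since the statement only assumes $h=h_1/h_0\leq\bar q$ (not equality), $h$ may be arbitrarily small, and then no fixed choice of $\tilde\varrho$ meets your stated condition. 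The fix is immediate: replace $h$ by $\bar q$ in the outer disc. For $\varrho:=\bar q\tilde\varrho/(1-\varepsilon)$ one has $(1-\varepsilon)\varrho/h=\bar q\tilde\varrho/h\geq\tilde\varrho$ (because $h\leq\bar q$), so Lemma~\ref{lemma:lazy-ellipse-inclusions} gives $\mathcal{D}^{J_1}_{\tilde\varrho}\subset\mathcal{D}_\varrho$ as soon as $\tilde\varrho\geq(1-\varepsilon)\varrho_0/\bar q$, a condition independent of $h$. Bernstein then yields $|\pi(z)|\leq(\bar q\tilde\varrho/(1-\varepsilon))^m\|\pi\|_{\infty,J_0}$, the $\tilde\varrho^m$ cancels as before, and you are left with precisely the factor $(\bar q/(1-\varepsilon))^m$ you wanted. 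This is also how the paper handles the dependence on $h$: it sets $\alpha=(1-\varepsilon)\varrho/\bar q$ rather than $(1-\varepsilon)\varrho/h$.
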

\begin{proof}
Let $\Phi := \Phi_{J_0} \colon [-1,1] \rightarrow J_0$ be the orientation
preserving affine bijection as in Section~\ref{sec:tensor_interpolation}. 
Set $\widehat \pi:= \pi \circ \Phi$, $[a,b]:= \widehat J_1:= \Phi^{-1}(J_1)$, 
$h:= h_1/h_0 = (b-a)/2 \leq \overline{q}$. 
We have 
\begin{equation*}
\inf_{v \in \Pi_m} \|\exp(\bi \kappa c_0 \punkt) \pi
                   - \exp(\bi \kappa c_1 \punkt) v\|_{\infty,J_1} 
 = \inf_{v \in \Pi_m} \|\exp(\bi \kappa  h_0 (c_0 - c_1)\punkt) \widehat \pi
                      - v\|_{\infty,\widehat J_1}. 
\end{equation*}
By the polynomial approximation results of Lemma~\ref{le:existence}, we
estimate for arbitrary $\alpha > 1$ and $m \in {\mathbb N}_0$ 
\begin{align*}
  \inf_{v \in \Pi_m}
  &\|\exp(\bi \kappa  h_0  (c_0 - c_1)\punkt) \widehat \pi
      - v\|_{\infty,\widehat J_1}
   \leq \frac{2\alpha^{-m} }{\alpha-1} 
      \| \exp(\bi \kappa  h_0 (c_0 - c_1)\punkt)
         \widehat \pi\|_{\infty,{\mathcal D}^{a,b}_{\alpha}}\\
  &\leq \frac{2\alpha^{-m}}{\alpha-1} 
      \exp\left(|\kappa  h_0 (c_0 - c_1)| h
                   \frac{\alpha -1/\alpha}{2}\right)
      \|\widehat \pi\|_{\infty,{\mathcal D}^{a,b}_{\alpha}}. 
\end{align*}
We now choose $\alpha$ in dependence on $m$.
Fix $\varepsilon \in (0,1-\bar q/q)$
(so that $\bar q/(1-\varepsilon) < q$) and choose $\beta > 0$ such that
(for the $q$ of the statement of the lemma)
\begin{equation*}
  q = \frac{\bar q}{1-\varepsilon}
      \exp\left( \frac{\gamma (1-\varepsilon) \beta}{2} \right).
\end{equation*}
We set $\varrho = \beta m$ and
$\alpha = (1-\varepsilon) \varrho/\bar q = (1-\varepsilon)\beta m/\bar q$. 
Lemma~\ref{lemma:lazy-ellipse-inclusions} implies 
${\mathcal D}^{a,b}_{\alpha} \subset {\mathcal D}_\varrho$
if $\beta m = \varrho \ge \varrho_0$.
We note that this condition imposes $m \ge \varrho_0/\beta$. 
Furthermore, the Bernstein estimate \cite[Thm.~{2.2}, Chap.~4]{DELO93}, gives 
\begin{equation*}
  \|\widehat \pi\|_{\infty,{\mathcal D}^{a,b}_{\alpha}} 
  \leq \|\widehat \pi\|_{\infty,{\mathcal D}_{\varrho}} 
  \leq \varrho^m \|\widehat \pi\|_{\infty,(-1,1)}. 
\end{equation*}
Hence we obtain 
\begin{align*}
  \inf_{v \in \Pi_m} & \|\exp(\bi \kappa  h_0  (c_0 - c_1)\punkt) \widehat \pi
                      - v\|_{\infty,\widehat J_1}
   \leq \frac{2}{\alpha-1} \left(\frac{\varrho}{\alpha}\right)^{m} 
          \exp\left(\gamma h \frac{\alpha -1/\alpha}{2}\right)
          \|\widehat \pi\|_{\infty,(-1,1)}\\
  &\leq \frac{2}{\alpha-1} \left(\frac{\bar q}{1-\varepsilon}\right)^{m} 
          \exp\left(\frac{\gamma \bar q \alpha}{2}\right)
          \|\widehat \pi\|_{\infty,(-1,1)}\\
  &= \frac{2}{\alpha-1} \left(\frac{\bar q}{1-\varepsilon}\right)^{m}  
          \exp\left(m\frac{\gamma (1-\varepsilon) \beta}{2}\right)
          \|\widehat \pi\|_{\infty,(-1,1)}\\
  &= \frac{2}{\alpha-1} \left( \frac{\bar q}{1-\varepsilon}
          \exp\left(\frac{\gamma (1-\varepsilon) \beta}{2}\right) \right)^m
          \|\widehat \pi\|_{\infty,(-1,1)}
   =  \frac{2}{\alpha-1} q^m \|\widehat \pi\|_{\infty,(-1,1)}\\
  &\leq q^m \|\widehat \pi\|_{\infty,(-1,1)},  
\end{align*}
where, in the  last step we used that $\alpha \to \infty$ as
$m \to \infty$; more precisely, we ensure $\alpha \geq 3$ by requiring 
\begin{equation*}
  m \ge m_0 := \max\left\{ \left\lceil \frac{\varrho_0}{\beta}\right\rceil,
    \left\lceil 3 \frac{\bar q}{(1-\varepsilon) \beta} \right\rceil
    \right\}.
\end{equation*}
\end{proof}

%
%
\begin{lemma}[Interpolation error]
\label{le:stability-II-i}
Let $\ell\in[1:L]$ and $\pi\in\Pi_m$.
We have
\begin{align}
\label{eq:stability-II-i}
  \|\exp(\bi \kappa  c_{\ell-1} \punkt) \pi
  &- \mathfrak{I}_\ell [\exp(\bi \kappa  c_{\ell-1} \punkt)
    \pi]\|_{\infty,J_\ell}\notag\\
  &\leq (1 + \Lambda_m)
     \inf_{v \in \Pi_m} \|\exp(\bi \kappa  c_{\ell-1} \punkt) \pi
                       - \exp(\bi \kappa  c_\ell \punkt) v\|_{\infty,J_\ell}. 
\end{align}
\end{lemma}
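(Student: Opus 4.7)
The plan is to use a standard Lebesgue-style quasi-optimality argument, adapted to the plane-wave weighted setting. The key observation is that $\mathfrak{I}_\ell$ is a projection onto the space $W_\ell := \exp(\bi \kappa c_\ell \punkt) \Pi_m$: indeed, for any $v \in \Pi_m$ the definition (\ref{eq:frak_I_ell}) gives
\begin{equation*}
  \mathfrak{I}_\ell[\exp(\bi \kappa c_\ell \punkt) v]
  = \exp(\bi \kappa c_\ell \punkt)
       \mathfrak{I}_{J_\ell}[v]
  = \exp(\bi \kappa c_\ell \punkt) v,
\end{equation*}
since $v \in \Pi_m$ is exactly reproduced by $\mathfrak{I}_{J_\ell}$.

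Next I would bound the operator norm $\|\mathfrak{I}_\ell\|_{\infty, J_\ell \leftarrow J_\ell} \leq \Lambda_m$. This follows directly from the fact that $|\exp(\pm \bi \kappa c_\ell \punkt)| = 1$, so pre- and postmultiplication by the plane wave does not alter the maximum norm, and $\mathfrak{I}_{J_\ell}$ inherits the bound $\Lambda_m$ from (\ref{eq:Lambda_m}) (the transformation $\Phi_{J_\ell}$ does not change the Lebesgue constant).

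With these two ingredients, the proof follows the classical argument. Write $u := \exp(\bi \kappa c_{\ell-1} \punkt) \pi$ and let $v \in \Pi_m$ be arbitrary. Since $\exp(\bi \kappa c_\ell \punkt) v \in W_\ell$ is fixed by $\mathfrak{I}_\ell$, we have
\begin{equation*}
  u - \mathfrak{I}_\ell[u]
  = \bigl(u - \exp(\bi \kappa c_\ell \punkt) v\bigr)
    - \mathfrak{I}_\ell\bigl[u - \exp(\bi \kappa c_\ell \punkt) v\bigr].
\end{equation*}
Taking $\|\cdot\|_{\infty, J_\ell}$ and applying the triangle inequality together with the operator norm bound yields
\begin{equation*}
  \|u - \mathfrak{I}_\ell[u]\|_{\infty, J_\ell}
  \leq (1 + \Lambda_m)
       \|u - \exp(\bi \kappa c_\ell \punkt) v\|_{\infty, J_\ell}.
\end{equation*}
Taking the infimum over $v \in \Pi_m$ produces (\ref{eq:stability-II-i}). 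There is no real obstacle here; the only subtlety is recognizing that $\mathfrak{I}_\ell$ is a projection onto the \emph{twisted} polynomial space $W_\ell$ rather than onto $\Pi_m$ itself, which is why the best approximation on the right-hand side is measured against $\exp(\bi \kappa c_\ell \punkt) \Pi_m$ rather than $\Pi_m$. This twist is essential for the subsequent nested stability analysis, where Lemma~\ref{lemma:approximation-with-exponential-factor} will be used to estimate the right-hand side.
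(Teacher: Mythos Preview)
Your proof is correct and is essentially the same as the paper's: the paper writes out the identity $u - \mathfrak{I}_\ell[u] = (u - \exp(\bi\kappa c_\ell\punkt)v) - \mathfrak{I}_\ell[u - \exp(\bi\kappa c_\ell\punkt)v]$ by expanding the definition of $\mathfrak{I}_\ell$, then applies the stability bound $\Lambda_m$ for $\mathfrak{I}_{J_\ell}$ and the triangle inequality. Your framing in terms of $\mathfrak{I}_\ell$ being a projection onto $\exp(\bi\kappa c_\ell\punkt)\Pi_m$ with operator norm $\Lambda_m$ is just a more conceptual packaging of the same computation.
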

\begin{proof}
Let $v\in\Pi_m$ be arbitrary.
Write 
\begin{align*}
  \exp(\bi \kappa  c_{\ell-1} \punkt ) \pi
  &- \mathfrak{I}_\ell [\exp(\bi \kappa  c_{\ell-1} \punkt) \pi]\\
  &= \exp(\bi \kappa  c_{\ell-1} \punkt) \pi
     - \exp(\bi \kappa  c_\ell \punkt ) v\\
  &\quad -\exp(\bi \kappa  c_\ell \punkt) {\mathfrak I}_{J_\ell}
       [\exp(-\bi \kappa  c_\ell \punkt)
          \{ \exp(\bi \kappa  c_{\ell-1} \punkt) \pi
             - \exp(\bi \kappa  c_\ell \punkt) v\}]. 
\end{align*}
Hence, by the stability of the polynomial interpolation operator ${\mathfrak I}$ we get 
\begin{align*}
  \|\exp(\bi \kappa  c_{\ell-1} \punkt) \pi
  &- \mathfrak{I}_\ell[ \exp(\bi \kappa  c_{i-1} \punkt) \pi] \|_{\infty,J_\ell}\\
  &\leq (1 + \Lambda_m) 
   \|\exp(\bi \kappa  c_{\ell-1} \punkt) \pi
     - \exp(\bi \kappa  c_\ell \punkt) v \|_{\infty,J_\ell}.
\end{align*}
\end{proof}

%
%
\begin{theorem}[Stability of reinterpolation]
\label{th:iterated-stability-neu}
Let the tuple $(J_\ell)_{\ell=0}^L$ satisfy (\ref{eq:nested-sequence-1D}) and (\ref{eq:contraction_factor}). 
Write $h_\ell = |J_\ell|/2$ for all $\ell\in[0:L]$.
Let $c_0,\ldots,c_L \in {\mathbb R}$ be such that, for some $\gamma\ge 0$,  
\begin{align}\label{eq:lemma:iterated-stability-neu-10}
  |\kappa  h_{\ell-1} (c_{\ell-1} - c_\ell)| &\leq \gamma &
  &\text{ for all } \ell\in[1:L].
\end{align}
Let the operators ${\mathfrak I}_\ell$, $\ell\in[0:L]$ 
be given by (\ref{eq:frak_I_ell}). 
Fix $q \in (\bar q,1)$.
Then there is $m_0 > 0$ depending solely on $\gamma$, $\bar q$, and
$q$, such that for all $m \ge m_0$
\begin{subequations}
\begin{align}
  \|(I - \mathfrak{I}_L\circ\cdots\circ\mathfrak{I}_1)
     [\exp(\bi \kappa c_0 \punkt ) \pi]\|_{\infty,J_L}
  &\leq \varepsilon_{m,L} \|\exp(\bi \kappa  c_0 \punkt) \pi\|_{\infty,J_0} &
  &\text{ for all } \pi\in\Pi_m,\label{eq:lemma:iterated-stability-neu-12}\\
  \|\mathfrak{I}_{\mathfrak C}\|_{C(J_L)\leftarrow C(J_0)}
  &\leq \Lambda_m \left( 1 + \varepsilon_{m,L} \right), 
  \label{eq:lemma:iterated-stability-neu-15}\\
  \varepsilon_{m,L} &:= (1 + (1+\Lambda_m) q^m)^L - 1.
  \label{eq:lemma:iterated-stability-neu-17}
\end{align}
\end{subequations}
Choose $\widehat q \in (q,1)$.
Then there is $K>0$ depending solely on $\gamma$, $\bar q$, the chosen
$\widehat q$, and the constants $C_\Lambda$, $\lambda$ of
(\ref{eq:polynomial-growth-Lambda}), such that the following implication
holds: 
\begin{equation}\label{eq:lemma:iterated-stability-neu-20}
  m \ge K (1 + \log L)
  \qquad \Longrightarrow \qquad \varepsilon_{m,L} \leq \widehat q^m. 
\end{equation}
\end{theorem}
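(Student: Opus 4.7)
The natural strategy is to track the iteration $g_\ell := \mathfrak{I}_\ell \circ \cdots \circ \mathfrak{I}_1[\exp(\bi\kappa c_0\punkt)\pi]$ step by step and exploit the fact that, by construction, each $g_\ell$ again has the form $\exp(\bi\kappa c_\ell\punkt)\pi_\ell$ with $\pi_\ell\in\Pi_m$. The consecutive error $g_{\ell-1}-g_\ell = (I-\mathfrak{I}_\ell)[g_{\ell-1}]$ can then be estimated by combining Lemma~\ref{le:stability-II-i} with the approximation result of Lemma~\ref{lemma:approximation-with-exponential-factor}: writing $g_{\ell-1}=\exp(\bi\kappa c_{\ell-1}\punkt)\pi_{\ell-1}$, hypothesis (\ref{eq:lemma:iterated-stability-neu-10}) together with $h_\ell\leq h_{\ell-1}$ verifies the hypothesis $|\kappa h_{\ell-1}(c_{\ell-1}-c_\ell)|\leq\gamma$ of that lemma, which in turn yields
\begin{equation*}
\|(I-\mathfrak{I}_\ell)[g_{\ell-1}]\|_{\infty,J_\ell}
\leq (1+\Lambda_m)q^m\|\pi_{\ell-1}\|_{\infty,J_{\ell-1}}
= (1+\Lambda_m)q^m\|g_{\ell-1}\|_{\infty,J_{\ell-1}},
\end{equation*}
provided $m\geq m_0$ with $m_0$ as in Lemma~\ref{lemma:approximation-with-exponential-factor}.

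\textbf{One-step recursion and telescoping.} From this one-step bound the triangle inequality gives $\|g_\ell\|_{\infty,J_\ell}\leq(1+(1+\Lambda_m)q^m)\|g_{\ell-1}\|_{\infty,J_{\ell-1}}$, so that by induction $\|g_\ell\|_{\infty,J_\ell}\leq(1+(1+\Lambda_m)q^m)^\ell\|g_0\|_{\infty,J_0}$. Writing the total error as the telescoping sum $g_0-g_L=\sum_{\ell=1}^{L}(g_{\ell-1}-g_\ell)$ on $J_L$ and inserting the geometric bound on $\|g_{\ell-1}\|_{\infty,J_{\ell-1}}$ produces
\begin{equation*}
\|g_0-g_L\|_{\infty,J_L}
\leq (1+\Lambda_m)q^m\sum_{\ell=1}^{L}\bigl(1+(1+\Lambda_m)q^m\bigr)^{\ell-1}\|g_0\|_{\infty,J_0}
=\varepsilon_{m,L}\|g_0\|_{\infty,J_0},
\end{equation*}
which is exactly (\ref{eq:lemma:iterated-stability-neu-12}). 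For (\ref{eq:lemma:iterated-stability-neu-15}) I would apply (\ref{eq:lemma:iterated-stability-neu-12}) to $\pi:=\exp(-\bi\kappa c_0\punkt)\mathfrak{I}_0[u]$ (which is indeed in $\Pi_m$) and combine it with the single-level bound $\|\mathfrak{I}_0[u]\|_{\infty,J_0}\leq\Lambda_m\|u\|_{\infty,J_0}$ via the triangle inequality $\|\mathfrak{I}_{\mathfrak C}[u]\|_{\infty,J_L}\leq\|\mathfrak{I}_0[u]\|_{\infty,J_L}+\|(I-\mathfrak{I}_L\circ\cdots\circ\mathfrak{I}_1)[\mathfrak{I}_0[u]]\|_{\infty,J_L}$.

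\textbf{Choice of $m$ in terms of $L$.} For the final implication (\ref{eq:lemma:iterated-stability-neu-20}) I would use the elementary inequalities $\log(1+x)\leq x$ and $e^x-1\leq 2x$ for $x\in[0,1]$: if $L(1+\Lambda_m)q^m\leq 1$, then
\begin{equation*}
\varepsilon_{m,L}\leq\exp\bigl(L(1+\Lambda_m)q^m\bigr)-1\leq 2L(1+\Lambda_m)q^m,
\end{equation*}
so that the desired estimate $\varepsilon_{m,L}\leq\widehat q^m$ reduces to $2L(1+\Lambda_m)\leq(\widehat q/q)^m$. Taking logarithms and using (\ref{eq:polynomial-growth-Lambda}) to bound $\log(1+\Lambda_m)\leq C_1+\lambda\log(m+1)$, the condition becomes $m\log(\widehat q/q)\geq\log L+\lambda\log(m+1)+C_2$; since $\widehat q/q>1$, the $\lambda\log(m+1)$ contribution is absorbed by half of the left-hand side once $m$ exceeds some threshold depending only on $\widehat q/q$, $C_\Lambda$, $\lambda$, and the remaining condition reads $m\geq K(1+\log L)$ for a suitable $K$.

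\textbf{Main obstacle.} The only real subtlety is bookkeeping the dependence of $m_0$: one must ensure that the $m_0$ from Lemma~\ref{lemma:approximation-with-exponential-factor} (using the hypothesis $|\kappa h_{\ell-1}(c_{\ell-1}-c_\ell)|\leq\gamma$ on every level $\ell$ uniformly in $\ell$) and the threshold arising in (\ref{eq:lemma:iterated-stability-neu-20}) both depend only on the stated data $\gamma$, $\bar q$, $q$, $\widehat q$, $C_\Lambda$, $\lambda$, and neither on $\kappa$, on the individual directions $c_\ell$, nor on $L$. Since Lemma~\ref{lemma:approximation-with-exponential-factor} is already stated in exactly this uniform form, this is a matter of inspecting the constants rather than a serious analytical difficulty.
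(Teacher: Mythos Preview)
Your proposal is correct and follows essentially the same approach as the paper: combine Lemmas~\ref{lemma:approximation-with-exponential-factor} and~\ref{le:stability-II-i} to get the one-step error bound $(1+\Lambda_m)q^m$, propagate this through the recursion to obtain the growth bound $\|g_\ell\|_{\infty,J_\ell}\le(1+(1+\Lambda_m)q^m)^\ell\|g_0\|_{\infty,J_0}$, telescope to recover $\varepsilon_{m,L}$, and then apply (\ref{eq:lemma:iterated-stability-neu-12}) to $\mathfrak{I}_0[u]$ for the operator norm. The only cosmetic difference is in the handling of (\ref{eq:lemma:iterated-stability-neu-20}): the paper first absorbs $(1+\Lambda_m)q^m$ into $\widetilde q^{\,m}$ for an intermediate $\widetilde q\in(q,\widehat q)$ and then deals with the factor $L$, whereas you keep $(1+\Lambda_m)$ explicit and absorb the resulting $\lambda\log(m+1)$ term into half of $m\log(\widehat q/q)$---both arguments are standard and yield the same dependence of $K$; just remember to note that your preliminary hypothesis $L(1+\Lambda_m)q^m\le 1$ is automatically covered by the same choice of $K$, since $\log(1/q)>\log(\widehat q/q)$.
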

\begin{proof}
Let $m_0$ be given by Lemma~\ref{lemma:approximation-with-exponential-factor},
and assume $m\geq m_0$.

\emph{Step 1.} (stability of $\mathfrak{I}_\ell$).
Combining 
Lemmas~\ref{lemma:approximation-with-exponential-factor} and
\ref{le:stability-II-i}, the following estimate
holds for arbitrary $\pi \in \Pi_m$ and $\ell\in[1:L]$:
\begin{subequations}
\begin{equation}\label{eq:lemma:iterated-stability-neu-100}
  \|\exp(\bi \kappa  c_{\ell-1} \punkt) \pi
    - \mathfrak{I}_\ell [\exp(\bi \kappa c_{\ell-1} \punkt)
      \pi]\|_{\infty,J_\ell}
  \leq (1 + \Lambda_m ) q^m
     \|\exp(\bi \kappa c_{\ell-1} \punkt) \pi\|_{\infty,J_{\ell-1}}.
\end{equation}
The triangle inequality yields the stability estimate
\begin{equation}\label{eq:lemma:iterated-stability-neu-110}
  \|\mathfrak{I}_\ell [\exp(\bi \kappa c_{\ell-1} \punkt) \pi]\|_{\infty,J_\ell}
  \leq (1 + (1+\Lambda_m )q^m)
    \|\exp(\bi \kappa c_{\ell-1} \punkt) \pi\|_{\infty,J_{\ell-1}}.  
\end{equation}
\end{subequations}

\emph{Step 2.}  (error estimate)
We note the following telescoping sum for $\ell=1,\ldots,L$:
\begin{align}
\label{eq:thm:nested-interpolation-80}
  E_\ell &:= I - \mathfrak{I}_\ell \circ \cdots \circ \mathfrak{I}_1\\
\nonumber 
  &= (I - \mathfrak{I}_1)
     + (I - \mathfrak{I}_2) \circ \mathfrak{I}_1
     + (I - \mathfrak{I}_3) \circ \mathfrak{I}_2 \circ \mathfrak{I}_1
     + \cdots
+ (I - \mathfrak{I}_\ell) \circ \mathfrak{I}_{\ell-1}
             \circ \cdots \circ \mathfrak{I}_1.
\end{align}
We claim the following estimates for $\ell \in [1:L]$:
\begin{align} 
\label{eq:thm:nested-interpolation-100}
\|E_\ell \left[\exp(\bi \kappa c_0 \punkt) \pi\right] \|_{\infty,J_\ell} 
&\leq \varepsilon_{m,\ell}  \|\pi\|_{\infty,J_0}, \\ 
\label{eq:thm:nested-interpolation-200}
\|{\mathfrak I}_\ell \circ \cdots \circ {\mathfrak I}_1 \left[\exp(\bi \kappa c_0 \punkt) \pi\right] \|_{\infty,J_{\ell}} 
&\leq (1+ \varepsilon_{m,\ell}) \|\pi\|_{\infty,J_0}. 
\end{align}
This is proved by induction on $\ell$. For $\ell = 1$, the estimate (\ref{eq:thm:nested-interpolation-100})
expresses (\ref{eq:lemma:iterated-stability-neu-100}), and 
(\ref{eq:thm:nested-interpolation-200}) then follows from the observation 
${\mathfrak I}_1 = I - E_1$ and the triangle inequality. 
To complete the induction argument,
assume that
(\ref{eq:thm:nested-interpolation-100}), (\ref{eq:thm:nested-interpolation-200}) are proven up to $\ell-1$.
We note that ${\mathfrak I}_{i} \circ \cdots \circ {\mathfrak I}_1 \left[\exp(\bi \kappa c_0 \punkt) \pi\right] = 
\exp(\bi \kappa c_i \punkt)  \widetilde \pi$ for some $\widetilde \pi \in \Pi_m$ for every $i$. 
Therefore, the induction hypothesis and (\ref{eq:thm:nested-interpolation-200}) imply for $i=1,\ldots, \ell-1$
\begin{align}
\label{eq:thm:nested-interpolation-2000} 
& \| (I - {\mathfrak I}_{i+1}) {\mathfrak I}_{i} \circ \cdots \circ {\mathfrak I}_1\left[\exp(\bi \kappa c_0 \punkt) \pi\right]\|_{\infty,J_{i+1}}  
= \| (I - {\mathfrak I}_{i+1}) \left[\exp(\bi \kappa c_i \punkt) \widetilde \pi\right]\|_{\infty,J_{i+1}}    \\
\nonumber 
& \quad 
\stackrel{(\ref{eq:lemma:iterated-stability-neu-100})}{\leq}\!\! 
(1 + \Lambda_m) q^m \|\exp(\bi \kappa c_i \punkt) \widetilde \pi\|_{\infty,J_i} 
 = (1+\Lambda_m)  q^m \|
({\mathfrak I}_{i} \circ \cdots \circ {\mathfrak I}_1)\! \left[\exp(\bi \kappa c_0 \punkt)  \pi\right]\! \|_{\infty,J_i}. 
\end{align}
Next, we get from
(\ref{eq:thm:nested-interpolation-80}),
(\ref{eq:thm:nested-interpolation-200}),
(\ref{eq:thm:nested-interpolation-2000}),
and the geometric series
\begin{align*}
\|E_\ell \left[\exp(\bi \kappa c_0 \punkt)  \pi\right]\|_{\infty,J_{\ell}} &
\stackrel{ (\ref{eq:thm:nested-interpolation-80})}{ \leq }
\sum_{i=0}^{\ell -1} 
\|(I - {\mathfrak I}_{i+1}) ({\mathfrak I}_i \circ \cdots \circ {\mathfrak I}_1) \left[\exp(\bi \kappa c_0 \punkt) \pi\right]\|_{\infty,J_\ell} \\
& \stackrel{(\ref{eq:thm:nested-interpolation-2000})}{\leq}
\sum_{i=0}^{\ell -1} (1+\Lambda_m) q^m 
\|({\mathfrak I}_i \circ \cdots \circ {\mathfrak I}_1) \left[\exp(\bi \kappa c_0 \punkt)  \pi\right]\|_{\infty,J_i} \\
& \stackrel{(\ref{eq:thm:nested-interpolation-200})}{\leq} \sum_{i=0}^{\ell-1} 
(1 + \Lambda_m)  q^m (1 + \varepsilon_{m,i}) \|\pi\|_{\infty,J_0} \\
& = (1+\Lambda_m) q^m \frac{ (1+(1 + \Lambda_m) q^m))^\ell- 1}{(1+(1+\Lambda_m) q^m)-1} \|\pi\|_{\infty,J_0} 
 = \varepsilon_{m,\ell} \|\pi\|_{\infty,J_0}, 
\end{align*}
which is the desired induction step for (\ref{eq:thm:nested-interpolation-100}). The induction step
for (\ref{eq:thm:nested-interpolation-200}) is now a simple application of the triangle inequality.

\emph{Step 3.} (stability estimate)
We consider $u\in C(J_0)$ and define
$\pi_0\in\Pi_m$ by $\pi_0:={\mathfrak I}_{J_0} [\exp(-\bi \kappa c_0 \punkt) u]$.
By definition of $\mathfrak{I}_0$, we have
\begin{gather*}
  \mathfrak{I}_0[u] = \exp(\bi \kappa c_0 \punkt) \pi_0,\qquad
  \|\pi_0\|_{\infty,J_0} \leq \Lambda_m \|u\|_{\infty,J_0},\\
  \mathfrak{I}_{\mathfrak{C}}[u] = \mathfrak{I}_L \circ \cdots
      \circ\mathfrak{I}_1[\exp(\bi \kappa c_0 \punkt) \pi_0].
\end{gather*}
Therefore, 
\begin{align*}
  \|\mathfrak{I}_{\mathfrak{C}}[u]\|_{\infty,J_L}
\stackrel{(\ref{eq:thm:nested-interpolation-200})}{\leq} 
(1 + \varepsilon_{m,L}) \|\exp(\bi \kappa c_0 \punkt) \pi_0\|_{\infty,J_0} 
\leq (1 + \varepsilon_{m,L}) \Lambda_m \|u\|_{\infty,J_0}, 
\end{align*}
which is (\ref{eq:lemma:iterated-stability-neu-15}). 

\emph{Step 4.} (bound for $\varepsilon_{m,L}$)
Let $\widetilde{q}\in(q,\widehat{q})$.
The stability assumption (\ref{eq:polynomial-growth-Lambda}) implies
that we can find $m_1$ such that
\begin{align*}
  (1 + \Lambda_m) q^m
  &\leq (1 + C_\Lambda (m+1)^\lambda) q^m
   \leq \widetilde q^m &
  &\text{ for all } m\geq m_1.
\end{align*}
Hence, we obtain
\begin{equation*}
  \varepsilon_{m,L} \leq (1 +\widetilde q^m)^L-1 
  = (1 +\widetilde q^m)^{\widetilde q^{-m} \widetilde q^m L}-1 
  \leq \exp(L \widetilde q^m) - 1, 
\end{equation*}
where we used $\sup_{x > 0} (1+x)^{1/x} \leq e$. 
Using the estimate $\exp(x) - 1 \leq e x$, which is valid for $x \in [0,1]$,
and assuming $\widetilde q^m L \leq 1$ (note that this holds for
$m \ge K (1 + \log L)$  with $K\geq 1/|\log \widetilde{q}|$), we obtain 
\begin{align*}
  \widehat q^{-m} \varepsilon_{m,L}
  &\leq e \widehat q^{-m} \widetilde q^m L
   = e (\widetilde q/\widehat q)^m L
   = \exp \bigl(\log L + \log e - m \log (\widehat q/\widetilde q)\bigr)\\
  &\leq \exp \bigl(\log L + \log e
             - K \log L \log(\widehat{q}/\widetilde{q})
             - K \log(\widehat{q}/\widetilde{q}) \bigr).
\end{align*}
Choosing $K := \max\{ m_0, m_1, 1/\log(\widehat{q}/\widetilde{q}),
1/|\log \widetilde{q}| \}$ completes the proof.
\end{proof}

\subsection{Multidimensional nested interpolation}

Using Theorem~\ref{th:iterated-stability-neu}, we can investigate the
stability and approximation properties of the multidimensional directional
interpolation operator.
We recall (\ref{eq:reinterpolation_reduced}), i.e.,
\begin{equation*}
  \mathfrak{I}_{\tau_L,c_L}\circ\cdots\circ\mathfrak{I}_{\tau_0,c_0}
  = (\mathfrak{I}_{L,1}\circ\cdots\circ\mathfrak{I}_{0,1})\otimes\cdots
    \otimes(\mathfrak{I}_{L,n}\circ\cdots\circ\mathfrak{I}_{0,n}).
\end{equation*}

%
%
\begin{theorem}[Nested directional interpolation]
\label{thm:nested_directional}
Let $\bar q\in(0,1)$ and assume that 
the nested sequence $\tau_L \subset \tau_{L-1} \subset \cdots \subset \tau_0$ of boxes of the form 
(\ref{eq:boxes}) satisfies 
\begin{align}\label{eq:contraction_boxes}
  \frac{b_{\ell,\iota}-a_{\ell,\iota}}
       {b_{\ell-1,\iota}-a_{\ell-1,\iota}} &\leq \bar q &
  &\text{ for all } \ell\in[1:L],\ \iota\in[1:n]. 
\end{align}
Assume that a sequence $(c_\ell)_{\ell=0}^L \subset \bbbr^n$ satisfies, for some $\gamma\in\bbbr_{>0}$, 
\begin{align}\label{eq:directions_boxes}
  \kappa \diam(\tau_{\ell-1}) \|c_{\ell-1}-c_\ell\| &\leq \gamma &
  &\text{ for all } \ell\in[1:L]. 
\end{align}
Let $\widehat{q}\in(\bar{q},1)$.
Then there is $K$ that depends solely on $\gamma$, $\bar q$, 
the chosen $\widehat{q}$, as well as $C_\Lambda$, $\lambda$ of (\ref{eq:polynomial-growth-Lambda})
such that for all $m\geq K (1+\log L)$ we have for the operator of 
(\ref{eq:reinterpolation_reduced}) the estimate 
\begin{equation*}
  \|\mathfrak{I}_{\tau_L,c_L}\circ\cdots\circ\mathfrak{I}_{\tau_0,c_0}
  \|_{\infty,\tau_L\leftarrow\tau_0}
  \leq \Lambda_m^n (1+\widehat{q}^m)^n.
\end{equation*}
\end{theorem}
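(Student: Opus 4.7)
The plan is to reduce to the one-dimensional case via the tensor product decomposition (\ref{eq:reinterpolation_reduced}), then apply Theorem~\ref{th:iterated-stability-neu} coordinate by coordinate.

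First, I fix a coordinate $\iota \in [1:n]$ and set $J_\ell := [a_{\ell,\iota},b_{\ell,\iota}]$ for $\ell\in[0:L]$, so that $J_L\subseteq\cdots\subseteq J_0$. Assumption (\ref{eq:contraction_boxes}) immediately yields $|J_\ell|/|J_{\ell-1}|\le\bar q$, which is the contraction hypothesis (\ref{eq:contraction_factor}). Writing $h_{\ell-1,\iota} := |J_{\ell-1}|/2$, one has $h_{\ell-1,\iota}\le\diam(\tau_{\ell-1})/2$ and $|c_{\ell-1,\iota}-c_{\ell,\iota}|\le\|c_{\ell-1}-c_\ell\|$. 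Combining these with (\ref{eq:directions_boxes}) gives
\begin{equation*}
  |\kappa h_{\ell-1,\iota}(c_{\ell-1,\iota}-c_{\ell,\iota})|
  \le \tfrac{1}{2}\kappa\diam(\tau_{\ell-1})\|c_{\ell-1}-c_\ell\|
  \le \gamma/2 \le \gamma,
\end{equation*}
which is precisely condition (\ref{eq:lemma:iterated-stability-neu-10}). Hence the one-dimensional operators $\mathfrak{I}_{\ell,\iota}$ of (\ref{eq:I_ell_iota_def}) fall under the scope of Theorem~\ref{th:iterated-stability-neu}.

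Next, I pick an auxiliary $q\in(\bar q,\widehat q)$ and apply the second part of Theorem~\ref{th:iterated-stability-neu} (with the given $\widehat q$ playing the role of the $\widehat q$ there). This yields a constant $K$ depending only on $\gamma$, $\bar q$, $\widehat q$, $C_\Lambda$, and $\lambda$ such that for every $m\ge K(1+\log L)$ one has $\varepsilon_{m,L}\le\widehat q^m$, and therefore by (\ref{eq:lemma:iterated-stability-neu-15})
\begin{equation*}
  \|\mathfrak{I}_{L,\iota}\circ\cdots\circ\mathfrak{I}_{0,\iota}\|_{\infty,J_L\leftarrow J_0}
  \le \Lambda_m(1+\widehat q^m).
\end{equation*}
Crucially, $K$ is chosen uniformly in $\iota$, since the parameters controlling it do not depend on the coordinate.

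Finally, the tensor product decomposition (\ref{eq:reinterpolation_reduced}) reads
\begin{equation*}
  \mathfrak{I}_{\tau_L,c_L}\circ\cdots\circ\mathfrak{I}_{\tau_0,c_0}
  = \bigotimes_{\iota=1}^n \bigl(\mathfrak{I}_{L,\iota}\circ\cdots\circ\mathfrak{I}_{0,\iota}\bigr),
\end{equation*}
and the standard fact that tensor products of bounded operators on $C$-spaces, equipped with the maximum norm, satisfy the submultiplicative bound $\|A_1\otimes\cdots\otimes A_n\|_{\infty}\le\prod_{\iota=1}^n\|A_\iota\|_{\infty}$ (applied successively in each coordinate, exactly as in the telescoping argument underlying Lemma~\ref{le:tensor_interpolation}) gives the claim $\Lambda_m^n(1+\widehat q^m)^n$. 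The main conceptual step is thus the reduction to the univariate Theorem~\ref{th:iterated-stability-neu}; the only mild obstacle is verifying that the geometric data per coordinate still satisfy its hypotheses with the same parameters, which is exactly what the factor $1/2$ above ensures.
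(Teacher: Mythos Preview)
Your proof is correct and follows essentially the same approach as the paper: reduce to the one-dimensional case via the tensor product decomposition (\ref{eq:reinterpolation_reduced}), apply Theorem~\ref{th:iterated-stability-neu} in each coordinate, and multiply the resulting bounds. Your write-up is in fact more explicit than the paper's in checking that the per-coordinate data satisfy the hypotheses of Theorem~\ref{th:iterated-stability-neu} (the paper simply asserts this) and in introducing the auxiliary $q\in(\bar q,\widehat q)$ needed to invoke (\ref{eq:lemma:iterated-stability-neu-20}).
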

\begin{proof}
We can apply Theorem~\ref{th:iterated-stability-neu} to get
\begin{align*}
  \|\mathfrak{I}_{\tau_L,c_L}\circ\cdots\circ\mathfrak{I}_{\tau_0,c_0}
     \|_{\infty,\tau_L\leftarrow\tau_0}
  &\leq \prod_{\iota=1}^n
     \|\mathfrak{I}_{L,\iota}\circ\cdots\circ\mathfrak{I}_{0,\iota}
     \|_{\infty,[a_{L,\iota},b_{L,\iota}]\leftarrow[a_{0,\iota},b_{0,\iota}]}\\
  &\leq \Lambda_m^n (1+\varepsilon_{m,L})^n
   \leq \Lambda_m^n (1+\widehat{q}^m)^n
\end{align*}
for all $m\geq K (1 + \log L)$.
\end{proof}

%
%
\begin{corollary}[Re-interpolated Lagrange polynomials]
\label{co:reinterpolated_lagrange}
Assume that the inequalities
(\ref{eq:contraction_boxes}) (\ref{eq:directions_boxes}) hold with $\bar q \in (0,1)$ and $\gamma \in \bbbr_{>0}$.
Fix $\widehat q \in (q,1)$. Then there is $K > 0$ depending only on $\bar{q}$, $\widehat q$, $\gamma$, 
and the constants $C_\Lambda$, $\lambda$ of (\ref{eq:polynomial-growth-Lambda})
such that for
all $m\geq K (1+\log L)$ we have for the functions 
$\widetilde{L}_{\tau_0 c_0,\nu} = 
  \mathfrak{I}_{\tau_L,c_L}\circ\cdots\circ\mathfrak{I}_{\tau_0,c_0} L_{\tau_0 c_0, \nu}$
introduced in (\ref{eq:Ltilde}) 
\begin{align*}
  \|\widetilde{L}_{\tau_0 c_0,\nu} - L_{\tau_0 c_0,\nu}
  \|_{\infty,\tau_L}
  &\leq \widehat{q}^m \|L_{\tau_0 c_0,\nu}\|_{\infty,\tau_0} &
  &\text{ for all } \nu\in M.
\end{align*}
\end{corollary}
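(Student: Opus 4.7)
The plan is to reduce the multidimensional claim to a coordinate-wise application of the 1D re-interpolation result (\ref{eq:lemma:iterated-stability-neu-12}), combined with a standard tensor-telescoping identity. The starting observation is that the Lagrange function has a tensor-product structure: writing $\tau_0 = \prod_\iota [a_{0,\iota},b_{0,\iota}]$ and $\nu = (\nu_1,\ldots,\nu_n)$, one has $L_{\tau_0 c_0,\nu}(x) = \prod_{\iota=1}^n f_\iota(x_\iota)$ with $f_\iota(t) := \exp(\bi \kappa c_{0,\iota} t) L_{[a_{0,\iota},b_{0,\iota}],\nu_\iota}(t)$, i.e., each factor is a univariate plane wave times a degree-$m$ polynomial. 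By (\ref{eq:reinterpolation_reduced}) the iterated operator factorizes accordingly as $\bigotimes_\iota T_\iota$ with $T_\iota := \mathfrak{I}_{L,\iota}\circ\cdots\circ\mathfrak{I}_{0,\iota}$. Since $\mathfrak{I}_{0,\iota}$ reproduces $f_\iota$ (its un-twisted version $L_{[a_{0,\iota},b_{0,\iota}],\nu_\iota}$ lies in $\Pi_m$), $T_\iota[f_\iota]$ agrees with $(\mathfrak{I}_{L,\iota}\circ\cdots\circ\mathfrak{I}_{1,\iota})[f_\iota]$, which is precisely the object controlled by Theorem~\ref{th:iterated-stability-neu}.

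Next I would verify the 1D hypotheses along each coordinate. Contraction (\ref{eq:contraction_factor}) is inherited directly from (\ref{eq:contraction_boxes}), and the inequality $|c_{\ell-1,\iota}-c_{\ell,\iota}|\leq\|c_{\ell-1}-c_\ell\|$ together with $(b_{\ell-1,\iota}-a_{\ell-1,\iota})/2\leq\diam(\tau_{\ell-1})/2$ reduces (\ref{eq:directions_boxes}) to (\ref{eq:lemma:iterated-stability-neu-10}) with $\gamma/2$ in place of $\gamma$. Applying (\ref{eq:lemma:iterated-stability-neu-12}) coordinate-wise, I obtain
\[
  \|T_\iota[f_\iota]-f_\iota\|_{\infty,[a_{L,\iota},b_{L,\iota}]}
  \leq \varepsilon_{m,L}\,\|f_\iota\|_{\infty,[a_{0,\iota},b_{0,\iota}]},
\]
together with the companion stability bound (via the triangle inequality)
\[
  \|T_\iota[f_\iota]\|_{\infty,[a_{L,\iota},b_{L,\iota}]}
  \leq (1+\varepsilon_{m,L})\,\|f_\iota\|_{\infty,[a_{0,\iota},b_{0,\iota}]},
\]
for all $m$ exceeding the threshold furnished by Theorem~\ref{th:iterated-stability-neu}.

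The multidimensional error now emerges from the standard tensor telescoping
\[
  \prod_\iota T_\iota[f_\iota] - \prod_\iota f_\iota
  = \sum_{\iota=1}^n \Bigl(\prod_{j<\iota} T_j[f_j]\Bigr)\,
     (T_\iota[f_\iota]-f_\iota)\,\Bigl(\prod_{j>\iota} f_j\Bigr).
\]
Taking sup-norms on $\tau_L$, substituting the two 1D estimates, and using $|\exp(\bi\kappa c_{0,\iota}t)|=1$ to identify $\prod_\iota\|f_\iota\|_\infty=\|L_{\tau_0 c_0,\nu}\|_{\infty,\tau_0}$ yields
\[
  \|\widetilde L_{\tau_0 c_0,\nu}-L_{\tau_0 c_0,\nu}\|_{\infty,\tau_L}
  \leq n\,(1+\varepsilon_{m,L})^{n-1}\,\varepsilon_{m,L}\,
       \|L_{\tau_0 c_0,\nu}\|_{\infty,\tau_0}.
\]

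The last step is to absorb the prefactor $n(1+\varepsilon_{m,L})^{n-1}$ into the geometric rate $\widehat q^m$. To this end I would fix an intermediate $\widetilde q\in(\bar q,\widehat q)$, apply (\ref{eq:lemma:iterated-stability-neu-20}) with $\widetilde q$ in place of $\widehat q$ to secure $\varepsilon_{m,L}\leq\widetilde q^m$ for all $m\geq K_0(1+\log L)$, and then enlarge $K$ so that additionally $n\,2^{n-1}(\widetilde q/\widehat q)^m\leq 1$ whenever $m\geq K(1+\log L)$; since $n$ is fixed and $\widetilde q/\widehat q<1$, this costs only an additive $O(\log n)$ contribution to $K$. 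There is no real analytical obstacle — the only mildly delicate point is the bookkeeping that connects the tensor structure of $L_{\tau_0 c_0,\nu}$ with the factorization (\ref{eq:reinterpolation_reduced}); once that is in place, the result is a direct combination of Theorem~\ref{th:iterated-stability-neu} with the tensor telescoping identity.
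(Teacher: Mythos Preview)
Your proposal is correct and follows essentially the same route as the paper: tensor-product structure of $L_{\tau_0 c_0,\nu}$, coordinate-wise application of the 1D estimates (\ref{eq:thm:nested-interpolation-100})--(\ref{eq:thm:nested-interpolation-200}) (equivalently (\ref{eq:lemma:iterated-stability-neu-12})), a telescoping sum across dimensions to obtain the bound $n(1+\varepsilon_{m,L})^{n-1}\varepsilon_{m,L}\|L_{\tau_0 c_0,\nu}\|_{\infty,\tau_0}$, and then invoking (\ref{eq:lemma:iterated-stability-neu-20}). Your final step, introducing an intermediate $\widetilde q\in(\bar q,\widehat q)$ to absorb the prefactor $n(1+\varepsilon_{m,L})^{n-1}$, is actually more careful than the paper, which simply gestures at (\ref{eq:lemma:iterated-stability-neu-20}) without spelling out how the $n$-dependent factor is swallowed.
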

\begin{proof}
We have
\begin{equation*}
  L_{\tau_0 c_0,\nu}
  = (\exp(\bi \kappa c_{0,1} \punkt) L_{[a_{0,1},b_{0,1}],\nu_1})\otimes\cdots
    \otimes(\exp(\bi \kappa c_{0,n} \punkt) L_{[a_{0,n},b_{0,n}],\nu_n})
\end{equation*}
with the transformed univariate Lagrange polynomials $L_{[a,b],\mu}$
defined in (\ref{eq:xi_L_trans_def}).
We use a telescoping sum to handle the dimensions $\iota\in[1:n]$
and obtain from 
(\ref{eq:thm:nested-interpolation-100}) and 
(\ref{eq:thm:nested-interpolation-200})
$$
  \|\widetilde{L}_{\tau_0 c_0,\nu} - L_{\tau_0 c_0,\nu}
  \|_{\infty,\tau_L}
  \leq n (1 + \varepsilon_{m,L})^{n-1} \varepsilon_{m,L} \|L_{\tau_0 c_0,\nu}\|_{\infty,\tau_0}
  \quad \text{ for all } \nu\in M.
$$
The result now follows in view of (\ref{eq:lemma:iterated-stability-neu-20}). 
\end{proof}

%
%
\begin{remark}
As observed in connection with (\ref{eq:c=0}), we choose $c_\ell = 0$ if
the boxes are sufficiently small relative to the wavelength  $2\pi/\kappa$.
In this case, the functions $\widetilde L_{\tau c,\nu}$ are standard
polynomials and the re-interpolation does not incur any error.
In other words: If (\ref{eq:c=0}) holds, then $L = \mathcal{O}(\log \kappa)$ so
that the condition $m \ge K (1 + \log L)$ reduces to
$m \ge K' \log (\log \kappa)$ for some $K'$. 
\end{remark}
%
%
%
\section{Numerical experiments}
\label{sec:numerics}

In order to investigate how accurately our theoretical results
predict the convergence of an actual implementation of our
nested interpolation scheme, we have implemented a ``pure'' version
of the algorithm outlined in Section~\ref{sec:algorithm}, i.e., a
version that does not use adaptive techniques to improve the compression rate.
While we acknowledge that for practical applications an algebraic
recompression scheme \cite{MESCDA12,BEKUVE15,BO04,BO10} is crucial,
we have chosen this approach to avoid pitfalls like unrealistically
low errors due to full rank ``approximations''.

We satisfy the admissibility condition (\ref{eq:adm_directional})
by assigning each level $\ell$ of the cluster tree a set $\mathcal{D}_\ell$
of directions constructed as follows:
we denote the maximal diameter of all clusters on level $\ell$ by
$\delta_\ell$ and split the surface of the cube $[-1,1]^3$ into squares with
diameter $\leq 2 \eta_1 / (\kappa \delta_\ell)$.
The midpoints $\tilde c$ of these squares are then projected by
$c:=\tilde c / \|\tilde c\|$ to the unit sphere.
By construction, each point on the cube's surface has a distance of
less than $\eta_1 / (\kappa \delta_\ell)$ to one of the midpoints, and
the projection cannot increase this distance.

We use the unit sphere as the surface $\Gamma$ for our test,
approximated by a triangulation constructed by regularly subdividing
the faces of a double pyramid into smaller triangles and projecting the
resulting vertices to the unit sphere.
We use meshes with $N\in\{4608, 8192, 18432, 32768, 73728, 131072\}$
triangles.

%
%
\begin{table}
  \begin{equation*}
    \begin{array}{r|r|r|llllll|l}
    \hline
    & & & \multicolumn{6}{|c}{\eta_1=10,\ \eta_2=1} & \\
    \hline
    N & \kappa & \|G\|_2 & m=2 & m=3 & m=4 & m=5 & m=6 & m=7 & \\   
    \hline
    9\times 2^{9\phantom{0}} & 6 & 8.2_{-4}
    & 1.6_{-6} & 5.1_{-8} & 3.2_{-10} & 1.4_{-11} & 4.4_{-13} & 2.0_{-14} & 0.05\\
    2\times 2^{12} & 8 & 3.7_{-4}
    & 1.4_{-6} & 1.4_{-7} & 5.7_{-9} & 4.6_{-10} & 2.9_{-11} & 1.8_{-12} & 0.06\\
    9\times 2^{11} & 12 & 1.3_{-4}
    & 9.9_{-7} & 1.4_{-7} & 1.0_{-8} & 1.0_{-9} & 9.0_{-11} & 7.1_{-12} & 0.08\\
    2\times 2^{14} & 16 & 5.8_{-5}
    & 7.0_{-7} & 7.0_{-8} & 6.1_{-9} & 7.0_{-10} & 7.5_{-11} & 7.0_{-12} & 0.09\\
    9\times 2^{13} & 24 & 2.0_{-5}
    & 2.0_{-7} & 2.0_{-8} & 2.4_{-9} & 2.8_{-10} & 2.9_{-11} & & (0.10)\\
    2\times 2^{16} & 32 & 9.2_{-6}
    & 1.1_{-7} & 1.4_{-8} & 1.8_{-9} & & & & (0.13)\\
    \hline
    & & & \multicolumn{6}{|c}{\eta_1=10,\ \eta_2=2} & \\
    \hline
    N & \kappa & \|G\|_2 & m=2 & m=3 & m=4 & m=5 & m=6 & m=7 & \\   
    \hline
    9\times 2^{9\phantom{0}} & 6 & 8.2_{-4}
    & 5.1_{-6} & 6.6_{-7} & 5.5_{-8} & 5.4_{-9} & 4.7_{-10} & 3.6_{-11} & 0.08\\
    2\times 2^{12} & 8 & 3.7_{-4}
    & 3.7_{-6} & 3.9_{-7} & 2.5_{-8} & 2.7_{-9} & 2.7_{-10} & 2.6_{-11} & 0.10\\
    9\times 2^{11} & 12 & 1.3_{-4}
    & 2.4_{-6} & 3.0_{-7} & 3.0_{-8} & 3.2_{-9} & 3.1_{-10} & 2.5_{-11} & 0.08\\
    2\times 2^{14} & 16 & 5.8_{-5}
    & 1.2_{-6} & 1.6_{-7} & 2.3_{-8} & 3.0_{-9} & 3.6_{-10} & 3.7_{-11} & 0.10\\
    9\times 2^{13} & 24 & 2.0_{-5}
    & 2.8_{-7} & 4.0_{-8} & 5.8_{-9} & 8.0_{-10} & 9.8_{-11} & & (0.12)\\
    2\times 2^{16} & 32 & 9.2_{-6}
    & 1.5_{-7} & 2.2_{-8} & 3.8_{-9} & 6.4_{-10} & & & (0.17)\\
    \hline
    \end{array}
  \end{equation*}
  \caption{Approximation errors $\|G-\widetilde{G}\|_2$ for
           the unit sphere}
  \label{ta:convergence}
\end{table}

%
%
\begin{figure}
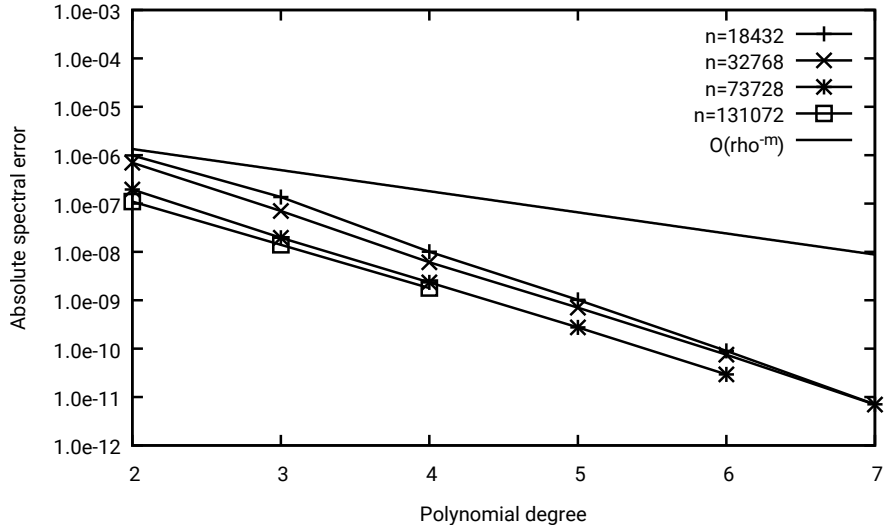

  \pgfdeclareimage[width=12cm]{convergence_eta2_1}{fi_convergence_eta2_1}

  \begin{center}
  \pgfuseimage{convergence_eta2_1}
  \end{center}
  \caption{Approximation errors for different meshes and different
           interpolation degrees}
  \label{fi:convergence}
\end{figure}

The cluster tree is set up by geometrical bisection.
The algorithm stops subdividing clusters $\tau$ as soon as the
corresponding index set $\hat\tau$ contains not more than $32$ indices
for $m=2$, $48$ indices for $m=3$, and $64$ indices for $m>3$.

The wave number $\kappa$ is chosen to provide us with a
high-frequency problem: we have $\kappa h \approx 0.6$, where
$h$ denotes the maximal mesh width, i.e., we have approximately
ten mesh elements per wavelength.

The approximation $\widetilde{G}$ constructed by our algorithm is
compared to the original matrix $G$, and the spectral norm
$\|G-\widetilde{G}\|_2$ of the error is approximated by $20$ steps
of the power iteration applied to the matrix
$(G-\widetilde{G})^* (G-\widetilde{G})$.
The norm $\|G\|_2$ is approximated in the same way.

Table~\ref{ta:convergence} summarizes our results:
the rows correspond to the different meshes, while the columns
give the spectral error estimates for different interpolation
degrees $m\in[2:7]$.
Missing numbers correspond to experiments that did not fit
into our machine's memory.

The last column of Table~\ref{ta:convergence} gives the
quotient of the last two computed errors, and we expect it to be a
good approximation of the asymptotic convergence rate.

We investigate two choices for the admissibility parameters
$\eta_1$ and $\eta_2$:
for $\eta_1=10$, $\eta_2=1$, our theory predicts an asymptotic
convergence rate of $1/(\sqrt{2}+1) \approx 0.41$.
We can see that the convergence rates in Table~\ref{ta:convergence}
are significantly smaller than this theoretical bound.
This is also illustrated in Figure~\ref{fi:convergence} showing the
measured errors for the four finest meshes and the slope predicted
by our analysis.

For the second choice $\eta_1=10$, $\eta_2=2$, we only expect a
convergence rate of $1/(\sqrt{5/4}+1/2) \approx 0.52$.
Once again, the measured rates are significantly better than
predicted.

Table~\ref{ta:convergence_cube} lists the results for the surface
of the cube $[-1,1]^3$ instead of the sphere, discretized with
$N\in\{6912,12288,27648,49152,110592\}$ triangles.
The convergence rates are very similar, illustrating that the
directional interpolation does not rely on the smoothness of
the surface $\Gamma$.

%
%
\begin{table}
  \begin{equation*}
    \begin{array}{r|r|r|llllll|l}
    \hline
    & & \multicolumn{6}{|c}{\eta_1=10,\ \eta_2=1} & \\
    \hline
    N & \kappa & \|G\|_2 & m=2 & m=3 & m=4 & m=5 & m=6 & m=7 & \\   
    \hline
    27\times 2^{8\phantom{0}} & 6 & 1.0_{-3}
    & 5.8_{-6} & 4.3_{-7} & 2.4_{-8} & 1.4_{-9} & 8.2_{-11} & 4.7_{-12} & 0.06\\
    3\times 2^{12} & 8 & 4.6_{-4}
    & 5.1_{-6} & 5.0_{-7} & 5.8_{-8} & 4.2_{-9} & 3.0_{-10} & 1.7_{-11} & 0.06\\
    27\times 2^{10} & 12 & 1.5_{-4}
    & 3.2_{-6} & 4.4_{-7} & 5.7_{-8} & 6.7_{-9} & 6.4_{-10} & 5.2_{-11} & 0.08\\
    3\times 2^{14} & 16 & 7.1_{-5}
    & 2.0_{-6} & 4.2_{-7} & 6.8_{-8} & 1.0_{-8} & 1.4_{-9} & 1.6_{-10} & 0.11\\
    27\times 2^{12} & 24 & 2.5_{-5}
    & 8.3_{-7} & 1.3_{-7} & 1.9_{-8} & & & & (0.15)\\
    \hline
    & & \multicolumn{6}{|c}{\eta_1=10,\ \eta_2=2} & \\
    \hline
    N & \kappa & \|G\|_2 & m=2 & m=3 & m=4 & m=5 & m=6 & m=7 & \\   
    \hline
    27\times 2^{8\phantom{0}} & 6 & 1.0_{-3}
    & 5.8_{-6} & 4.3_{-7} & 2.4_{-8} & 1.4_{-9} & 8.2_{-11} & 4.7_{-12} & 0.06\\
    3\times 2^{12} & 8 & 4.6_{-4}
    & 1.4_{-5} & 2.7_{-6} & 2.8_{-7} & 3.7_{-8} & 4.2_{-9} & 4.4_{-10} & 0.10\\
    27\times 2^{10} & 12 & 1.5_{-4}
    & 5.3_{-6} & 9.1_{-7} & 1.2_{-7} & 1.6_{-8} & 1.5_{-9} & 1.8_{-10} & 0.12\\
    3\times 2^{14} & 16 & 7.1_{-5}
    & 3.9_{-6} & 9.5_{-7} & 1.6_{-7} & 2.6_{-8} & 3.6_{-9} & 4.2_{-10} & 0.12\\
    27\times 2^{12} & 24 & 2.5_{-5}
    & 1.2_{-6} & 1.8_{-7} & 2.5_{-8} & 4.0_{-9} & & & (0.16)\\
    \hline
    \end{array}
  \end{equation*}
  \caption{Approximation errors $\|G-\widetilde{G}\|_2$ for the
           surface of the cube $[-1,1]^3$}
  \label{ta:convergence_cube}
\end{table}

%
%

\bibliographystyle{plain}
\bibliography{hmatrix}

%
%

\appendix

\section{\texorpdfstring{$\mathcal{DH}^2$-matrices}
                        {DH2-matrices}}

Our results can be used to prove convergence of various efficient
algorithms for the Helmholtz boundary integral equation.
We now discuss a straightforward approach that leads to what we
call \emph{directional $\mathcal{H}^2$-matrices}, or short
$\mathcal{DH}^2$-matrices.
These matrices are a generalization of the
\emph{$\mathcal{H}^2$-matrix representation} \cite{HAKHSA00,BOHA02,BO10}.

Our definition of $\mathcal{DH}^2$-matrices is not identical to the
one used in \cite{BEKUVE15}, since we do not switch to an
$\mathcal{H}$-matrix representation for the low-frequency case, but
use $\mathcal{H}^2$-matrix representations for all blocks.

%
%
\begin{definition}[Cluster tree]
\label{de:cluster}
Let $\mathcal{T}$ be a labeled tree such that the label $\hat t$
of each node $t\in\mathcal{T}$ is a subset of the index set $\Idx$.
We call $\mathcal{T}$ a \emph{cluster tree} for $\Idx$ if
\begin{itemize}
  \item the root $r\in\mathcal{T}$ is assigned the label $\hat r=\Idx$,
  \item the index sets of siblings are disjoint, i.e.,
    \begin{align*}
      t_1\neq t_2 &\Longrightarrow \hat t_1\cap\hat t_2=\emptyset &
      &\text{ for all } t\in\mathcal{T},\ t_1,t_2\in\sons(t),
         \text{ and}
    \end{align*}
  \item the index sets of a cluster's sons are a partition of
    their father's index set, i.e.,
    \begin{align*}
      \hat t &= \bigcup_{t'\in\sons(t)} \hat t' &
      &\text{ for all } t\in\mathcal{T}
          \text{ with } \sons(t)\neq\emptyset.
    \end{align*}
\end{itemize}
A cluster tree for $\Idx$ is usually denoted by $\ctI$.
Its nodes are called \emph{clusters}.
\end{definition}

A cluster tree $\ctI$ can be split into levels:
we let $\ctIl{0}$ be the set containing only the root of $\ctI$
and define
\begin{align*}
  \ctIl{\ell} &:= \{ t'\in\ctI\ :\ t'\in\sons(t) \text{ for a }
                       t\in\ctIl{\ell-1} \} &
  &\text{ for all } \ell\in\bbbn.
\end{align*}
For each cluster $t\in\ctI$, there is exactly one $\ell\in\bbbn_0$
such that $t\in\ctIl{\ell}$.
We call this the \emph{level number} of $t$ and denote it by $\level(t)=\ell$.
The maximal level
\begin{equation*}
  p_\Idx := \max\{ \level(t)\ :\ t\in\ctI \}
\end{equation*}
is called the \emph{depth} of the cluster tree.

Pairs of clusters $(t,s)$ correspond to subsets $\hat t\times\hat s$
of $\Idx\times\Idx$, and by extension to submatrices of
$G\in\bbbc^{\Idx\times\Idx}$.
These pairs inherit the hierarchical structure provided by the
cluster tree.

%
%
\begin{definition}[Block tree]
\label{de:block}
Let $\mathcal{T}$ be a labeled tree, and let $\ctI$ be a cluster tree
for the index set $\Idx$ with root $r_\Idx$.
We call $\mathcal{T}$ a \emph{block tree} for $\ctI$ if
\begin{itemize}
  \item for each $b\in\mathcal{T}$ there are $t,s\in\ctI$
        such that $b=(t,s)$,
  \item the root $r\in\mathcal{T}$ satisfies $r=(r_\Idx,r_\Idx)$,
  \item the label of $b=(t,s)\in\mathcal{T}$ is given by
        $\hat b=\hat t\times\hat s$, and
  \item for each $b=(t,s)\in\mathcal{T}$ we have
    \begin{equation*}
      \sons(b)\neq\emptyset \Longrightarrow
      \sons(b) = \sons(t)\times\sons(s).
    \end{equation*}
\end{itemize}
A block tree for $\ctI$ is usually denoted by $\ctII$.
Its nodes are called \emph{blocks}.
\end{definition}

In the following, we assume that a cluster tree $\ctI$ for the
index set $\Idx$ and a block tree $\ctII$ for $\ctI$ are given.

We have to identify submatrices, corresponding to blocks, that can
be approximated efficiently.
Considering the form (\ref{eq:matrix}) of the matrix entries, we
require the approximation $\tilde g_{ts}$ of the kernel function $g$
to be valid in the entire support of the basis functions
$\varphi_i$ and $\varphi_j$ for $i\in\hat t$ and $j\in\hat s$.

%
%
\begin{definition}[Bounding box]
Let $t\in\ctI$ be a cluster.
An axis-parallel box $\tau\subseteq\bbbr^3$ is called a
\emph{bounding box} for $t$ if
\begin{align*}
  \supp(\varphi_i) &\subseteq \tau &
  &\text{ for all } i\in\hat t.
\end{align*}
\end{definition}

In practice we can construct bounding boxes of minimal size by
a simple and fast recursive algorithm \cite[Example~2.2]{BOGRHA03}.

Our approximation scheme (\ref{eq:directional_approximation}) requires
a direction for the plane wave.
In order to obtain the optimal order of complexity, we fix a finite set of
directions for each level of the cluster tree and introduce a
connection between the directions for a cluster $t$ and the directions
for its sons $t'\in\sons(t)$.

%
%
\begin{definition}[Hierarchical directions]
\label{de:directions}
A family $(\mathcal{D}_\ell)_{\ell=0}^\infty$ of finite subsets
of $\bbbr^3$ is called a \emph{family of hierarchical directions} if
\begin{align*}
  \|c\| = 1 &\vee c = 0  &
  &\text{ for all } c\in\mathcal{D}_\ell,\ \ell\in\bbbn_0.
\end{align*}
A family $(\sd{\ell})_{\ell=0}^\infty$ of mappings
$\sd{\ell}:\mathcal{D}_\ell\to\mathcal{D}_{\ell+1}$ is 
called a \emph{family of compatible son mappings} if
\begin{align*}
  \|c-\sd{\ell}(c)\| &\leq \|c-\tilde c\| &
  &\text{ for all } c\in\mathcal{D}_\ell,\ \tilde c\in\mathcal{D}_{\ell+1},
                    \ \ell\in\bbbn_0.
\end{align*}
Given a cluster tree $\ctI$, a family of hierarchical directions, 
and a family of compatible son mappings, we write
\begin{align*}
  \mathcal{D}_t &:= \mathcal{D}_{\level(t)}, &
  \sd{t}(c) &:= \sd{\level(t)}(c) &
  &\text{ for all } t\in\ctI,\ c\in\mathcal{D}_{\level(t)}.
\end{align*}
\end{definition}

%
%
\begin{remark}
The ``direction'' $c=0$ is included in Definition~\ref{de:directions}
in order to include the low-frequency case in our scheme in a convenient way,
cf. (\ref{eq:c=0}).
\end{remark}

%
%
\begin{remark}[Implementation]
In practice, we only have to define $\mathcal{D}_\ell$ for
$\ell\leq p_\Idx$ and $\sd{\ell}$ for $\ell<p_\Idx$.
Our definition admits infinite levels only to avoid special cases.
\end{remark}

In the following, we fix a cluster tree $\ctI$, a family
$(\mathcal{D}_\ell)_{\ell=0}^\infty$ of hierarchical directions and
a family $(\sd{\ell})_{\ell=0}^\infty$ of compatible son mappings.

Assume that a block $b=(t,s)\in\ctII$ and a direction
$c=c_b\in\mathcal{D}_t=\mathcal{D}_s$ is given.
According to (\ref{eq:VSW}), replacing $g$ in (\ref{eq:matrix}) with the
directional approximation
\begin{align*}
  \tilde g_{ts}(x,y)
  &= \sum_{\nu\in M} \sum_{\mu\in M} g_c(\xi_{t,\nu}, \xi_{s,\nu})
         L_{tc,\nu}(x) \overline{L_{sc,\mu}(y)} &
  &\text{ for all } x\in \tau,\ y\in \sigma
\end{align*}
yields a low-rank factorization of the form
\begin{equation}\label{eq:matrix_apx}
  G|_{\hat t\times\hat s} \approx V_{tc} S_b V_{sc}^*.
\end{equation}
The directional re-interpolation of
the Lagrange polynomials described in (\ref{eq:lagrange_reinterpolation})
leads to the nested representation
\begin{equation}\label{eq:nested_apx}
  V_{tc}|_{\hat t'\times k} \approx V_{t'c'} E_{t'c}
\end{equation}
of the matrices $V_{tc}$.
This approximation brings about a complexity reduction since only the small
matrices $E_{t'c} \in \bbbc^{k \times k}$ need to be stored instead of 
$V_{tc}\in\bbbc^{\hat t\times k}$.  
The notation $E_{t'c}$ is well-defined since the father $t\in\ctI$ is
uniquely determined by $t'\in\sons(t)$ due to the tree structure
and the direction $c' = \sd{t}(c) \in\mathcal{D}_{t'}$ is uniquely determined
by $c\in\mathcal{D}_t$ due to our Definition~\ref{de:directions}.

%
%
\begin{definition}[Directional cluster basis]
Let $M$ be a finite index set, and let
$V=(V_{tc})_{t\in\ctI,c\in\mathcal{D}_t}$ be a family of matrices.
We call it a \emph{directional cluster basis} if
\begin{itemize}
  \item $V_{tc}\in\bbbc^{\hat t\times M}$ for all $t\in\ctI$
     and $c\in\mathcal{D}_t$, and
  \item there is a family
     $E=(E_{t'c})_{t\in\ctI,t'\in\sons(t),c\in\mathcal{D}_t}$
     such that
     \begin{align}\label{eq:nested-hinten}
       V_{tc}|_{\hat t'\times k} &= V_{t'c'} E_{t'c} &
       &\text{ for all } t\in\ctI,\ t'\in\sons(t),\ c\in\mathcal{D}_t,
                         \ c' = \sd{t}(c). 
     \end{align}
\end{itemize}
The elements of the family $E$ are called \emph{transfer matrices} for
the directional cluster basis $V$, and $k:=\#M$ is called its \emph{rank}.
\end{definition}

We can now define the class of matrices that is the subject of this
article:
we denote the \emph{leaves} of the block tree $\ctII$ by
\begin{equation*}
  \lfII := \{ b\in\ctII\ :\ \sons(b)=\emptyset \}.
\end{equation*}
The corresponding sets $\hat b\subseteq\Idx\times\Idx$ form a disjoint
partition of $\Idx\times\Idx$, so a matrix $G$ is uniquely determined
by the submatrices $G|_{\hat b}$ for $b\in\lfII$.
For most of these submatrices, we can find an approximation of the
form (\ref{eq:matrix_apx}).
These matrices are called \emph{admissible} and collected in a
subset
\begin{align*}
  \lfaII &:= \{ b\in\lfII\ :\ b \text{ is admissible} \}.\\
\intertext{The remaining blocks are called \emph{inadmissible} and
collected in the set}
  \lfiII &:= \lfII \setminus \lfaII.
\end{align*}
How to decide whether a block is admissible or not is the topic of 
Section~\ref{su:admissibility}.

%
%
\begin{definition}[Directional $\mathcal{H}^2$-matrix]
Let $V$ and $W$ be directional cluster bases for $\ctI$.
Let $G\in\bbbc^{\Idx\times\Idx}$ be a matrix.
We call it a \emph{directional $\mathcal{H}^2$-matrix} (or simply a
\emph{$\mathcal{DH}^2$-matrix}) if there are families
$S=(S_b)_{b\in\lfaII}$ and $(c_b)_{b\in\lfaII}$ such that
\begin{itemize}
  \item $S_b\in\bbbc^{k\times k}$ and $c_b\in\mathcal{D}_t=\mathcal{D}_s$
    for all $b=(t,s)\in\lfaII$, and
  \item $G|_{\hat t\times\hat s} = V_{tc} S_b W_{sc}^*$ with $c=c_b$ for all
    $b=(t,s)\in\lfaII$.
\end{itemize}
The elements of the family $S$ are called \emph{coupling matrices},
and $c_b$ is called the \emph{block direction} for $b\in\ctII$.
The cluster bases $V$ and $W$ are called the \emph{row cluster basis}
and \emph{column cluster basis}, respectively.

A \emph{$\mathcal{DH}^2$-matrix representation} of a
matrix $G$ consists of $V$, $W$, $S$ and the family
$(G|_{\hat b})_{b\in\lfiII}$ of \emph{nearfield matrices} corresponding
to the inadmissible leaves of $\ctII$.
\end{definition}

Let $G$ be a $\mathcal{DH}^2$-matrix for the directional cluster
bases $V$ and $W$, and let $x\in\bbbc^\Idx$.
We denote the corresponding cluster tree by $\ctI$ and the
corresponding block tree by $\ctII$ with admissible leaves
$\lfaII$.
For an efficient evaluation of the matrix-vector product $y=Gx$.
we follow the familiar approach of fast multipole and
$\mathcal{H}^2$-matrix techniques:
since the submatrices are factorized into three
terms
\begin{align*}
  G|_{\hat t\times\hat s} &= V_{tc} S_b W_{sc}^* &
  &\text{ for all } b=(t,s)\in\lfaII,
\end{align*}
the algorithm is split into three phases:
in the first phase, called the \emph{forward transformation},
we multiply by $W_{sc}^*$ and compute
\begin{subequations}
\begin{align}
  \widehat{x}_{sc} &= W_{sc}^* x|_{\hat s} &
  &\text{ for all } s\in\ctI,\ c\in\mathcal{D}_s;\label{eq:mvm1}
\intertext{in the second phase, the \emph{coupling step},
we multiply these coefficient vectors by the coupling matrices $S_b$
and obtain}
  \widehat{y}_{tc}
  &:= \sum_{\substack{b=(t,s)\in\lfaII\\ c=c_b}} S_b \widehat{x}_{sc} &
  &\text{ for all } t\in\ctI,\ c\in\mathcal{D}_t;\label{eq:mvm2}
\intertext{and in the final phase, the \emph{backward transformation},
we multiply by $V_{tc}$ to get the result}
  y_i &= \sum_{\substack{t\in\ctI,\ c\in\mathcal{D}_t\\ i\in\hat t}}
           (V_{tc} \widehat{y}_{tc})_i &
  &\text{ for all } i\in\Idx.\label{eq:mvm3}
\end{align}
\end{subequations}
The first and third phase can be handled efficiently by using
the transfer matrices $E_{t'c}$:
let $s\in\ctI$ with $\sons(s)\neq\emptyset$, and let $c\in\mathcal{D}_s$.
Due to the structure of the cluster tree, the set
$\{\hat s'\ :\ s'\in\sons(s)\}$ is a disjoint partition of the index
set $\hat s$.
Combined with (\ref{eq:nested-hinten}), this implies
\begin{equation*}
  W_{sc}^* x|_{\hat s}
   = \sum_{s'\in\sons(s)} (W_{sc}|_{\hat s'\times k})^* x|_{\hat s'}
   = \sum_{s'\in\sons(s)} E_{s'c}^* V_{s'c'}^* x|_{\hat s'}
   = \sum_{s'\in\sons(s)} E_{s'c}^* \widehat{x}_{s'c'},
\end{equation*}
and we can prepare \emph{all} coefficient vectors $\widehat{x}_{sc}$
by the simple recursion given on the left of Figure~\ref{fi:forward_backward}.
By similar arguments we find that the third phase can also be handled
by the recursion given on the right of Figure~\ref{fi:forward_backward}.

%
%
\begin{figure}
  \begin{minipage}[t]{0.47\textwidth}
  \begin{tabbing}
    \textbf{ procedure} forward($s$, $x$, \textbf{ var} $\widehat{x}$);\\
    \textbf{ if} $\sons(s)=\emptyset$ \textbf{ then}\\
    \quad\= \textbf{ for} $c\in\mathcal{D}_s$ \textbf{ do}
               $\widehat{x}_{sc} \gets W_{sc}^* x|_{\hat s}$\\
    \textbf{ else begin}\\
    \> \textbf{ for} $s'\in\sons(s)$ \textbf{ do}
               forward($s'$, $x$, $\widehat{x}$);\\
    \> \textbf{ for} $c\in\mathcal{D}_s$ \textbf{ do begin}\\
    \> \quad\= $\widehat{x}_{sc} \gets 0$;\\
    \> \> \textbf{ for} $s'\in\sons(s)$ \textbf{ do}\\
    \> \> \quad\= $\widehat{x}_{sc} \gets \widehat{x}_{sc}
                  + E_{s'c}^* \widehat{x}_{s'c'}$\\
    \> \textbf{ end}\\
    \textbf{ end}
  \end{tabbing}
  \end{minipage}%
  \hfill%
  \begin{minipage}[t]{0.47\textwidth}
  \begin{tabbing}
    \textbf{ procedure} backward($t$, \textbf{ var} $\widehat{y}$, $y$);\\
    \textbf{ if} $\sons(t)=\emptyset$ \textbf{ then}\\
    \quad\= \textbf{ for} $c\in\mathcal{D}_s$ \textbf{ do}
               $y|_{\hat t} \gets y|_{\hat t} + V_{tc} \widehat{y}_{tc}$\\
    \textbf{ else begin}\\
    \> \textbf{ for} $c\in\mathcal{D}_t$ \textbf{ do}\\
    \> \quad\= \textbf{ for} $t'\in\sons(t)$ \textbf{ do}\\
    \> \> \quad\= $\widehat{y}_{t'c'} \gets \widehat{y}_{t'c'}
                  + E_{t'c} \widehat{y}_{tc}$;\\
    \> \textbf{ for} $t'\in\sons(t)$ \textbf{ do}
               backward($t'$, $\widehat{y}$, $y$)\\
    \textbf{ end}
  \end{tabbing}
  \end{minipage}

  \caption{Fast forward and backward transformation}
  \label{fi:forward_backward}
\end{figure}

The submatrices corresponding to inadmissible leaves $b=(t,s)\in\lfiII$
are stored as standard arrays and can be evaluated accordingly.

We see that the algorithms use each of the matrices of the
$\mathcal{DH}^2$-matrix representation exactly once, so the bound
provided by Remark~\ref{re:directions} for the storage requirements
yields an $\mathcal{O}(N k + \kappa^2 k^2 \log N)$ complexity 
for the matrix-vector multiplication.

\end{document}